\numberwithin{equation}{section}
\newtheorem{lemma}{Lemma}[section]
\newtheorem{theorem}[lemma]{Theorem}
\newtheorem{proposition}[lemma]{Proposition}
\newtheorem{definition}[lemma]{Definition}
\newtheorem{corollary}[lemma]{Corollary}
\newtheorem{example}[lemma]{Example}
\newtheorem{exercise}[lemma]{Exercise}
\newtheorem{remark}[lemma]{Remark}
\newcommand{\cov}{{\rm cov}}
\newcommand{\var}{{\rm Var}}
\newcommand{\cid}{\stackrel{d}{\rightarrow}}
\newcommand{\cip}{\stackrel{p}{\rightarrow}}
\newcommand{\ciw}{\stackrel{w}{\rightarrow}}
\newcommand{\civ}{\stackrel{v}{\rightarrow}}
\newcommand{\stas}{\stackrel{\rm a.s.}{\rightarrow}}
\newcommand{\ex}{{\rm e}\,}
\newcommand{\bbs}{{\mathbb S}}
\newcommand{\bbr}{{\mathbb R}}
\newcommand{\bfx}{{\bf x}}
\newcommand{\bfX}{{\bf X}}
\newcommand{\bfY}{{\bf Y}}
\newcommand{\E}{{\mathbb E}}
\renewcommand{\P}{{\mathbb P}}
\newcommand{\bT}{{\mathbf\Theta}}
\newcommand{\beao}{\begin{eqnarray*}}
\newcommand{\eeao}{\end{eqnarray*}\noindent}
\newcommand{\beam}{\begin{eqnarray}}
\newcommand{\eeam}{\end{eqnarray}\noindent}
\newcommand{\beqq}{\begin{equation}}
\newcommand{\eeqq}{\end{equation}\noindent}
\newcommand{\beqo}{\begin{equation*}}
\newcommand{\eeqo}{\end{equation*}\noindent}
\newcommand{\bth}{\begin{theorem}}
\newcommand{\ethe}{\end{theorem}}
\newcommand{\bre}{\begin{remark}\em }
\newcommand{\ere}{\end{remark}}
\newcommand{\ble}{\begin{lemma}}
\newcommand{\ele}{\end{lemma}}
\newcommand{\bde}{\begin{definition}}
\newcommand{\ede}{\end{definition}}
\newcommand{\bco}{\begin{corollary}}
\newcommand{\eco}{\end{corollary}}
\newcommand{\bpr}{\begin{proposition}}
\newcommand{\epr}{\end{proposition}}
\newcommand{\bpf}{\begin{proof}}
\newcommand{\epf}{\end{proof}}
\newcommand{\bexer}{\begin{exercise}}
\newcommand{\eexer}{\end{exercise}}
\newcommand{\bexam}{\begin{example}\rm }
\newcommand{\eexam}{\end{example}}
\newcommand{\revise}[1]{{\color{black} #1}}
\begin{document}



\begin{frontmatter}
\title{\Large Threshold Selection for Multivariate Heavy-Tailed Data}
\runtitle{Threshold Selection for Multivariate Heavy-Tailed Data}
\begin{aug}
\author{Phyllis Wan}\ead[label=e1]{phyllis@stat.columbia.edu}\and
\author{Richard A. Davis}
\ead[label=e2]{rdavis@stat.columbia.edu}
\address{Department of Statistics\\
Columbia University\\
1255 Amsterdam Avenue, MC 4690\\
New York, NY 10027\\
\printead{e1,e2}}

\runauthor{Wan and Davis}


\end{aug}

\begin{keyword}
\kwd{distance covariance}
\kwd{heavy-tailed data}
\kwd{multivariate regular variation}
\kwd{threshold selection}
\end{keyword}

\begin{abstract}
Regular variation is often used as the starting point for modeling multivariate heavy-tailed data. A random vector is regularly varying if and only if its radial part $R$ is regularly varying and is asymptotically independent of the angular part $\Theta$ as $R$ goes to infinity. The conditional limiting distribution of $\Theta$ given $R$ is large characterizes the tail dependence of the random vector and hence its estimation is the primary goal of applications. A typical strategy is to look at the angular components of the data for which the radial parts exceed some threshold. While a large class of methods has been proposed to model the angular distribution from these exceedances, the choice of threshold has been scarcely discussed in the literature. In this paper, we describe a procedure for choosing the threshold by formally testing the independence of $R$ and $\Theta$ using a measure of dependence called distance covariance. We generalize the limit theorem for distance covariance to our unique setting and propose an algorithm which selects the threshold for $R$. This algorithm incorporates a subsampling scheme that is also applicable to weakly dependent data. Moreover, it avoids the heavy computation in the calculation of the distance covariance, a typical limitation for this measure. The performance of our method is illustrated on both simulated and real data.

\end{abstract}

\end{frontmatter}
\maketitle
\bigskip


\section{Introduction}\label{sec:intro}

For multivariate heavy-tailed data, the principal objective is often to study dependence in the `tail' of the distribution. To achieve this goal, the assumption of multivariate regular variation is typically used as a starting point. A random vector $\bfX\in\bbr^d$  is said to be {\em multivariate regularly varying} 
if the polar coordinates $(R,\mathbf\Theta) = (\|\bfX\|,\bfX/\|\bfX\|)$, where $\|\cdot\|$ is some norm, {satisfy} the conditions
\begin{enumerate}
	\item[(a)]
		$R$ is univariate regularly varying, i.e., $\P(R>r) = L(r) r^{-\alpha}$, where $L(\cdot)$ is a slowly varying function at infinity;
	\item[(b)]
		$\P(\mathbf\Theta\in\cdot|R>r)$ converges weakly to a measure $S(\cdot)$ as $r\to\infty$.
\end{enumerate}
The $\alpha$ is referred to as the index of the regular variation, while the $S$ is called the angular distribution and characterizes the limiting tail dependence. There are other equivalent definitions of regular variation \citep{resnick:2002}, but this one is the most convenient for our purposes.

Given observations $\{\bfX_i\}_{i=1}^n$ and their corresponding polar coordinates $\{(R_i,\mathbf\Theta_i)\}_{i=1}^n$, a straightforward procedure for estimating $S$ is to look at angular components of the data for which the radii are greater than a large threshold $r_0$, that is, $\mathbf\Theta_i$ for which $R_i>r_0$. In most studies, one takes $r_0$ to be a large empirical quantile of $R$. While there has been extensive research on choosing a threshold for which the distribution of $R$ is regularly varying (i.e., limit condition (a)), little research has been devoted to ensuring the threshold is large enough for the independence of $\Theta$ and $R$ to be reasonable (i.e., limit condition (b)). To this end, \cite{dehaan:deronde:1998} fit a parametric extreme value distribution model to each marginal and examined the parameter stability plot of each coordinate. The St\v{a}ric\v{a} plot \citep{starica:1999} looked at the joint tail empirical measure, but was, in some way, equivalent to only examining the extremal behavior of $R$. {\cite{resnick:2007} suggested an automatic threshold selection from the St\v{a}ric\v{a} plot but observed that the thresholds were sometimes systematically underestimated.} In their study of the threshold based inference for parametric max-stable processes, \cite{jeon:smith:2014} suggested choosing the threshold by minimizing the MSE of the estimated parameters. 

In this paper, we propose an algorithm which selects the threshold for modeling $S$. Our motivation is the implied property that $R$ and $\mathbf\Theta$ given $R>r$ become independent as $r \to \infty$. Given a sequence of candidate threshold levels, we test the degree of dependence between $R$ and $\mathbf\Theta$ for the truncated data above each level. The dependence measure we use is the distance covariance introduced by \cite{szekely:rizzo:bakirov:2007}. This measure has the ability to account for various types of dependence and to be applicable to data in higher dimensions. The resulting test statistics are given in the form of $p$-values and are compared across all levels through a subsampling scheme. This enables us to extract more accurate information from the test statistics while not overloading the computational burden.

The remainder of this paper is organized as follows. We first provide some theoretical backgrounds on multivariate regular variation in Section \ref{sec:mrv}. The distance covariance and its theoretical properties are introduced in Section \ref{sec:dcor}. Applying this dependence measure in our conditioning setting, we propose a test statistic and prove relevant theoretical results in Section \ref{sec:theory}. Our proposed algorithm for threshold selection is presented in Section \ref{sec:method}, and illustrated on simulated and real examples in {Section} \ref{sec:examples}. The paper concludes with a discussion.




\section{Multivariate regular variation and problem set-up} \label{sec:mrv}

One way to approach multivariate heavy-tailed data is through the notion of multivariate regular variation. For a detailed review, see, for example, Chapter 6 of \cite{resnick:2007}. Let $\mathbf{X}=(X_1,\ldots,X_d)$ be a $d$-dimensional random variable defined on the cone ${\bbr}^d_+ = {[\mathbf{0},\mathbf{\infty})}\backslash\{\mathbf{0}\} $. 
Define the polar coordinate transformation \beqq \label{eq:polar}
	T(\mathbf X) = (\|\mathbf{X}\|,\mathbf{X}/\|\mathbf{X}\|) =: (R,\mathbf\Theta),
\eeqq
{where $\|\cdot\|$ denotes some norm.}
Then $\bfX$ is regularly varying if and only if there exists a probability measure $S(\cdot)$ on $\mathbb{S}^{d-1}$, the unit sphere in $\mathbb{R}^d$, and a function $b(t)\to\infty$, such that
\beqq \label{eq:2}
	t\P\left[\left(R/b(t),\mathbf\Theta\right) \in \cdot \right] \overset{v}{\to} \nu_\alpha \times S, \quad  t \to \infty,
\eeqq
where $\overset{v}{\to}$ denotes vague convergence, and $\nu_\alpha$ is a measure defined on $(0,\infty]$ such that
$$
	\nu_\alpha(x,\infty] = x^{-\alpha},\quad  x>0.
$$
Here $b(t)$ can be chosen as the $1-t^{-1}$-quantile, i.e.,
$$
	b(t) = \inf\{s|\P (R\le s) \ge 1-t^{-1}\}.
$$
The convergence \eqref{eq:2} implies that
\begin{equation} \label{eq:3}
	\P\left[\left(\frac{R}{r},\mathbf\Theta\right) \in \cdot \middle| R>r \right] \ciw \nu_\alpha \times S, \quad r \to \infty,\quad \mbox{ on } [1,\infty) \times \bbs^{d-1},
\end{equation}
where $\ciw$ denotes weak convergence.
In other words, given that $R>r$ for $r$ large, the conditional distribution of $R/r$ and $\mathbf\Theta$ are {independent in the limit}. {In view of \eqref{eq:3}, we restrict the measure $\nu_\alpha$ to $[1,\infty)$ throughout the remainder of the paper.} The angular measure $S$ characterizes the tail dependence structure of $\bfX$. If $S$ is concentrated on $\{e_i,i=1,\ldots d\}$, where $e_i=(0,\ldots,0,1,0,\ldots,0)$, then the components of $\bfX$ are asymptotically independent in the tail, a case known as asymptotic independence. If $S$ has mass lying in the subspace {$\{(t_1,\ldots,t_d)\in\bbs^{d-1}|t_i>0,t_j>0,i\ne j\}$}, then an extreme observation in the $X_i$ direction implicates a positive probability of an extreme observation in the $X_j$ direction, a case known as asymptotic dependence. Hence the estimation of $S$ from observations is an important problem, and often the primary goal, in multivariate heavy-tailed modeling.

The following convergence is implied from \eqref{eq:3}:
\beqq \label{eq:ang:dist}
	\P(\mathbf\Theta \in \cdot|R>r) \ciw S(\cdot),\quad r \to \infty.
\eeqq
This suggests estimating $S$ using the angular data $(\mathbf\Theta_i)$ whose radial parts satisfy $R_i>r_0$ for $r_0$ large. The motivation behind our method is to seek $r_0$ such that when $R>r_0$, $R$ and $\mathbf\Theta$ are virtually independent. Given a candidate threshold sequence $\{r_k\}$, we formally test the independence between $(R_i,\mathbf\Theta_i)$ on the index set $\{i|R_i>r_k\}$. The use of Pearson's correlation as the dependence measure is unsuitable in this case, for two reasons. First, correlation is only applicable to univariate random variables, whereas $\mathbf\Theta$ lies on the sphere of dimension $d-1$. Second, correlation only describes the linear relationship between two random variables, thus having zero correlation is not a sufficient condition for independence. Instead, we use a more powerful dependence measure, the {\it distance covariance}, which is introduced in the next section.



\section{Distance covariance}\label{sec:dcor}

In this section, we briefly review the definition and some properties of the distance covariance. More detailed descriptions and proofs can be found in \cite{szekely:rizzo:bakirov:2007} and \cite{davis:matsui:mikosch:wan:2016}.

Let $X\in\bbr^p$ and $Y\in\bbr^q$ be two random vectors, then the distance covariance between $X$ and $Y$ is defined as
\beqq \label{eq:dcov}
	T(X,Y;\mu) = \int_{\bbr^{p+q}} \big|\varphi_{X,Y}(s,t)-\varphi_X(s)\,\varphi_Y(t)\big|^2\,\mu(ds,dt)\,,\,\quad (s,t)\in\bbr^{p+q}
\eeqq
where $\varphi_{X,Y}(s,t),\varphi_X(s),\varphi_Y(t)$ denote the joint and marginal characteristic functions of $(X,Y)$ and $\mu$ is a {suitable} measure on $\bbr^{p+q}$. {In order to ensure that $T(X,Y;\mu)$ is well-defined, one of the following conditions is assumed to be satisfied throughout the paper \citep{davis:matsui:mikosch:wan:2016}:
\vspace{-.05in}
\begin{enumerate}
\item
	$\mu$ is a finite measure on $\bbr^{p+q}$;
\item
	$\mu$ is an infinite measure on $\bbr^{p+q}$ such that
	$$
		\int_{\bbr^{p+q}} (1\wedge|s|^\alpha) (1\wedge|t|^\alpha) \mu(ds,dt) < \infty
	$$
	and
	$$
		\E[|XY|^\alpha+|X|^\alpha+|Y|^\alpha] < \infty
	$$
	for some $\alpha \in (0,2]$.
	\vspace{-.02in}
\end{enumerate}}
One advantage of distance covariance over, say, Pearson's covariance, is that, if  $\mu$ has a positive Lebesgue density on $\bbr^{p+q}$, then $X$ and $Y$ are independent if and only if $T(X,Y;\mu)=0$. Another attractive property of this dependence measure is that it readily applies to random vectors of different dimensions.

To estimate $T(X,Y;\mu)$ from observations $(X_1,Y_1),\ldots,(X_n,Y_n)$, define the empirical distance covariance
\beqo
	T_n(X,Y;\mu) = \int_{\bbr^{p+q}} \big|\hat\varphi_{X,Y}(s,t)-\hat\varphi_X(s)\,\hat\varphi_Y(t)\big|^2\,\mu(ds,dt)\,,
\eeqo
where $\hat\varphi_{X,Y}(s,t)= \dfrac 1 n \sum_{j=1}^n \ex^{i\,\langle s,X_j\rangle+i\,\langle t,Y_j\rangle}$ and $\hat\varphi_X(s)=\hat\varphi_{X,Y}(s,0),\hat\varphi_Y(t)=\hat\varphi_{X,Y}(0,t)$ are the respective empirical characteristic functions. 
If we assume that $\mu = \mu_1\times\mu_2$ and is symmetric about the origin, {then under the conditions where $T(X,Y;\mu)$ exists,} $T_n(X,Y;\mu)$ also has the computable form
\beao
	T_n(X,Y;\mu) 
		&=& \frac{1}{n^2}\,\sum_{i,j=1}^n\,\tilde \mu_1 (X_i-X_j)\,\tilde\mu_2(Y_i-Y_j) \nonumber\\
	 	&&+\, \frac{1}{n^4}\, \sum_{i,j,k,l=1}^n\,\tilde\mu_1(X_i-X_j) \tilde \mu_2(Y_k-Y_l)\, -\,\frac{2}{n^3}\,\sum_{i,j,k=1}^n\,\tilde \mu_1(X_i-X_j) \tilde \mu_2(Y_i-Y_k), \label{eq:edovalt}
\eeao
where $\tilde \mu(x)= \int(1-\cos \langle s,x\rangle) \,\mu(ds)$ \citep{davis:matsui:mikosch:wan:2016}.

The most popular choice of $\mu$, first mentioned by \cite{feuerverger:1993} and then more extensively studied by \cite{szekely:rizzo:bakirov:2007}, is
\beqq \label{eq:dcormeas}
	\mu(ds,dt)=c_{p,q}|s|^{-\kappa-p}|t|^{-\kappa-q}ds\,dt\,.
\eeqq
where $c_{p,q}$ is as defined in Lemma 1 of \cite{szekely:rizzo:bakirov:2007}. This choice of $\mu$ gives $\tilde\mu(x) \tilde\mu(y) = |x|^\kappa|y|^\kappa$. Moreover, this is the only choice of $\mu$ for which the distance covariance is invariant relative to scale and orthogonal transformations. Note that in order for the integral \eqref{eq:dcov} to exist, it is required that 
\beqo \label{eq:dcor:moment}
	E[|X|^\kappa|Y|^\kappa+|X|^\kappa+|Y|^\kappa] < \infty.
\eeqo
We will utilize the described weight measure \eqref{eq:dcormeas} with $\kappa=1$ in our simulations and data analyses in Section \ref{sec:examples}, but applied to the log transformation on $R$ to ensure that the moment condition is satisfied.

As detailed in \cite{davis:matsui:mikosch:wan:2016}, if the sequence $\{(X_i,Y_i)\}$ is stationary and ergodic, then
\beqq\label{eq:dcor:consist}
	T_n(X,Y;\mu) \stas T(X,Y;\mu).
\eeqq
Further, if $X$ and $Y$ are independent, then under an $\alpha$-mixing condition,
\beqq\label{eq:dcor:asymp}
	n\,T_n(X,Y;\mu) \cid \int_{\bbr^{p+q}}  |G(s,t)|^2\mu(s,t)
\eeqq
for some centered Gaussian field $G$. On the other hand, if $X$ and $Y$ are dependent, then
$$
	\sqrt{n}(T_n(X,Y;\mu) -T(X,Y;\mu)) \cid G'_\mu
$$
for some non-trivial limit $G'_\mu$, implying that $n\,T_n(X,Y;\mu)$ diverges as $n\to\infty$. Naturally one can devise a test of independence between $X$ and $Y$ using the statistic $n\,T_n(X,Y;\mu)$: the null hypothesis of independence is rejected at level $\chi$ if $n\,T_n(X,Y;\mu) > c$, where $c$ is the upper $\chi$-quantile of $\int_{\bbr^{p+q}} |G(s,t)|^2\mu(s,t)$. 

In practice, the distribution $\int_{\bbr^{p+q}} |G(s,t)|^2\mu(s,t)$ is intractable and is typically approximated through bootstrap. Hence the main drawback of using distance covariance is the computation burden it brings for large sample size: the computation of a single distance covariance statistic requires $O(n^2)$ operations, while finding the cut-off values via resampling requires much more additional computation. Our method, however, overcomes this problem through subsampling the data, as will be described in Section \ref{sec:method}.



\section{Theoretical results}\label{sec:theory}

Let $\{\bfX_i\}_{i=1}^n$ be iid observations in $\bbr^d$ from a multivariate regularly varying distribution $\mathbf{X}$ satisfying \eqref{eq:polar} and \eqref{eq:3}, and $\{(R_i,\mathbf\Theta_i)\}_{i=1}^n$ be their polar coordinate transformations. Given a threshold $r_n$, we measure the dependence between ${R/r_n}$ and $\bT$ conditional on $R>r_n$ by the empirical distance covariance of $({R/r_n},\bT)$ on the set $\{R>r_n\}$:
\beqq \label{eq:teststat}
	T_n := \int_{\bbr^{d+1}}  |C_n(s,t)|^2 \mu(ds,dt),
\eeqq
with 
$$C_n(s,t) := \hat\varphi_{{\frac{R}{r_n}},\Theta|r_n}(s,t) - \hat\varphi_{{\frac{R}{r_n}}|r_n}(s)\hat\varphi_{\Theta|r_n}(t),$$
where $\hat\varphi_{{\frac{R}{r_n}},\Theta|r_n}$ is the conditional empirical characteristic function of $({R/r_n},\bT)$,
\beqo
	\hat\varphi_{{\frac{R}{r_n}},\Theta|r_n}(s,t) = \frac{1}{\sum_{j=1}^n \mathbf{1}_{\{R_j>r_n\}}}\sum_{j=1}^n e^{isR_j/r_n + it^T\mathbf\Theta_j}\mathbf{1}_{\{R_j>r_n\}},\ s \in \bbr,\, \ t=(t_1,\ldots,t_d)^T \in \bbr^{d},
\eeqo
and $\hat\varphi_{{\frac{R}{r_n}}|r_n}, \hat\varphi_{\Theta|r_n}$ are the corresponding empirical conditional marginal characteristic functions,
$$
	\hat\varphi_{{\frac{R}{r_n}}|r_n}(s) = \hat\varphi_{{\frac{R}{r_n}},\Theta|r_n}(s,0),\,\quad\,
	\hat\varphi_{\Theta|r_n}(t)=\hat\varphi_{{\frac{R}{r_n}},\Theta|r_n}(0,t).
$$

In this section, we establish the limiting results \eqref{eq:dcor:consist} and \eqref{eq:dcor:asymp} adapted to the conditional distance covariance.
For ease of notation, let
$$
p_n := \P(R>r_n)\ , \ \quad \ \hat p_n := \frac{1}{n}  \sum_{j=1}^n \mathbf{1}_{\{R_j>r_n\}}
$$
be the theoretical and empirical probability of exceedance, and let
$$
\varphi_{{\frac{R}{r_n}},\Theta|r_n}(s,t) := \E\left[ e^{isR/r_n + it^T\mathbf\Theta} | R>r_n\right] = \frac{\E\left[ e^{isR/r_n + it^T\mathbf\Theta} \mathbf{1}_{R>r_n}\right] }{p_n},
$$
and 
$$
\varphi_{{\frac{R}{r_n}}|r_n}(s) := \varphi_{{\frac{R}{r_n}},\Theta|r_n}(s,0),\,\quad\,\varphi_{\Theta|r_n}(t):=\varphi_{{\frac{R}{r_n}},\Theta|r_n}(0,t),
$$
be the theoretical conditional joint and marginal characteristic functions.

{Recall from \eqref{eq:3} that as $n\to\infty$, $R/r_n$ and $\mathbf\Theta$ become asymptotically independent and converge to $\nu_\alpha$ and $S$ respectively. Denote the characteristic functions of the corresponding limit distributions by }
\beam
	\varphi_R(s) &:=& {\int_1^\infty} \exp(is r) \alpha r^{-\alpha-1}dr = \lim_{n\to\infty} \varphi_{{\frac{R}{r_n}}|r_n}(s), \label{eq:phi:r} \\
	\varphi_\Theta(t) &:=& {\int_{\bbs^{d-1}}}  \exp(it\theta) S(d\theta) = \lim_{n\to\infty} \varphi_{\Theta|r_n}(t).\label{eq:phi:theta}
\eeam
{We have the following results.}


\begin{theorem} \label{thm:1}

\begin{enumerate}
\item
	Let $\mathbf{X}_1, \ldots,\mathbf{X}_n$ be iid observations generated from $\mathbf{X}$, {where $\mathbf{X}$ is multivariate regularly varying with index $\alpha>1$}.  Let $T_n$ be the conditional {empirical} distance covariance between the angular and radial component defined in \eqref{eq:teststat}. Further assume that $np_n \to \infty$ and the weight measure $\mu$ satisfies 
	\beqq \label{eq:weight}
		\int_{\bbr^{d+1}} (1\wedge |s|^{\beta})(1\wedge |t|^{2}) \mu(ds,dt) <\infty,
	\eeqq
	 for some {$1<\beta<2\wedge\alpha$}. Then
	\beqo
		T_n \cip 0.
	\eeqo
	
\item
	In addition, if $\{r_n\}$ satisfies
	\beqq \label{eq:cond}
		np_n \int_{\bbr^{d+1}} |\varphi_{{\frac{R}{r_n}},\Theta|r_n}(s,t)- \varphi_{{\frac{R}{r_n}}|r_n}(s)\varphi_{\Theta|r_n}(t)|^2\mu(ds,dt) \to 0,
	\eeqq
	then
	\beqq \label{eq:thm2}
		n\hat p_nT_n \cid \int_{\bbr^{d+1}} |Q(s,t)|^2\mu(ds,dt),
	\eeqq
	{where $Q$ is a centered Gaussian process with covariance function
	\beqq \label{eq:Qcov}
		\cov(Q(s,t),Q(s',t')) = (\varphi_R(s-s') - \varphi_R(s)\varphi_R(-s'))(\varphi_\Theta(t-t') - \varphi_\Theta(t)\varphi_\Theta(-t'))
	\eeqq
	with $\varphi_R,\varphi_\Theta$ as defined in \eqref{eq:phi:r} and \eqref{eq:phi:theta}.}

\end{enumerate}
\end{theorem}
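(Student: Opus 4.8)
\noindent The plan is to reduce both statements to the asymptotics of an explicit $L^2(\mu)$-valued quadratic statistic. Write $N_n=n\hat p_n=\sum_{j=1}^n\mathbf{1}_{\{R_j>r_n\}}$ and
\[
	\psi_n(s,t):=\frac1{np_n}\sum_{j=1}^n e^{\,isR_j/r_n+it^T\bT_j}\,\mathbf{1}_{\{R_j>r_n\}},
\]
so that $\psi_n(0,0)=\hat p_n/p_n$ and $\hat\varphi_{R/r_n,\Theta|r_n}(s,t)=\psi_n(s,t)/\psi_n(0,0)$. Then $C_n(s,t)=D_n(s,t)/\psi_n(0,0)^2$, and symmetrizing the double sum gives
\begin{align*}
	D_n(s,t)&:=\psi_n(s,t)\psi_n(0,0)-\psi_n(s,0)\psi_n(0,t)\\
	&=\frac1{2(np_n)^2}\sum_{j,k=1}^n\bigl(e^{\,isR_j/r_n}-e^{\,isR_k/r_n}\bigr)\bigl(e^{\,it^T\bT_j}-e^{\,it^T\bT_k}\bigr)\mathbf{1}_{\{R_j>r_n\}}\mathbf{1}_{\{R_k>r_n\}}.
\end{align*}
Since $np_n\to\infty$, $\psi_n(0,0)=\hat p_n/p_n\cip1$ by the law of large numbers. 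Because $T_n=\psi_n(0,0)^{-4}\int|D_n|^2\,d\mu$ and $n\hat p_nT_n=\psi_n(0,0)^{-3}\int np_n|D_n|^2\,d\mu$, Slutsky's theorem reduces parts (1) and (2) to showing $\int|D_n|^2\,d\mu\cip0$ and $\int np_n|D_n|^2\,d\mu\cid\int|Q|^2\,d\mu$, respectively.

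The computations rest on a handful of moment bounds. As $\bT$ lives on $\bbs^{d-1}$, one has the deterministic bound $|e^{\,it^T\bT_j}-e^{\,it^T\bT_k}|^2\le C(1\wedge|t|^2)$; and, since $R$ is regularly varying of index $\alpha>\beta$, Potter's bounds give $\sup_n\E[(R/r_n)^\beta\mid R>r_n]<\infty$, which together with $|e^{\,iu}-1|^2\le4(1\wedge|u|^\beta)$ (valid for $\beta\le2$) and the concavity of $x\mapsto1\wedge x$ yields
\[
	\E\bigl[|e^{\,isR_j/r_n}-e^{\,isR_k/r_n}|^2\mid R_j>r_n,\,R_k>r_n\bigr]\le C(1\wedge|s|^\beta).
\]
Setting $b_n(s,t):=\varphi_{R/r_n,\Theta|r_n}(s,t)-\varphi_{R/r_n|r_n}(s)\varphi_{\Theta|r_n}(t)$ (the ``conditional bias''), the identity $\E[e^{\,isR/r_n+it^T\bT}\mathbf{1}_{\{R>r_n\}}]=p_n\varphi_{R/r_n,\Theta|r_n}(s,t)$ shows the off-diagonal summands of $D_n$ have mean $2p_n^2b_n(s,t)$, so $\E D_n(s,t)=(1-n^{-1})b_n(s,t)$. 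Combining these estimates with the conditional Cauchy--Schwarz inequality yields, uniformly in $n$,
\[
	\E|D_n(s,t)|^2\le C(1\wedge|s|^\beta)(1\wedge|t|^2),\qquad \var\bigl(D_n(s,t)\bigr)\le\frac{C}{np_n}(1\wedge|s|^\beta)(1\wedge|t|^2),
\]
the variance estimate coming from the Hoeffding decomposition of the degree-two sum in $D_n$, in which the linear projection contributes a term of order $n^3p_n^3$ and the degenerate part only $n^2p_n^2$ to the variance of $\sum_{j,k}(\cdots)$. By \eqref{eq:weight}, both right-hand sides are $\mu$-integrable.

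For part (1) I would argue as follows: by \eqref{eq:3}, $\varphi_{R/r_n,\Theta|r_n}(s,t)\to\varphi_R(s)\varphi_\Theta(t)$, hence $b_n(s,t)\to0$ and $\E D_n(s,t)\to0$, and the variance bound gives $D_n(s,t)\cip0$ for every $(s,t)$. The family $\{|D_n(s,t)|^2\}_n$ is uniformly integrable ($|D_n|\le2\psi_n(0,0)^2$ is bounded in every $L^p$) and is dominated in $L^1(\mu)$ by $C(1\wedge|s|^\beta)(1\wedge|t|^2)$, so dominated convergence gives $\E\int|D_n|^2\,d\mu\to0$, whence $\int|D_n|^2\,d\mu\cip0$ and $T_n\cip0$.

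For part (2), write $\sqrt{np_n}\,D_n=\sqrt{np_n}\,\E D_n+\sqrt{np_n}(D_n-\E D_n)$; the first term equals $(1-n^{-1})\sqrt{np_n}\,b_n$ and tends to $0$ in $L^2(\mu)$ precisely by hypothesis \eqref{eq:cond}, so the task is to prove $\sqrt{np_n}(D_n-\E D_n)\cid Q$ in $L^2(\mu)$. Finite-dimensional convergence should follow from the Lindeberg CLT applied to the Hoeffding projection of the triangular array (the degenerate part being $O_P((np_n)^{-1/2})$ in $L^2(\mu)$, negligible because $np_n\to\infty$); a direct covariance computation, in which the lower-order cross terms vanish thanks to $p_n\to0$ and $\varphi_{R/r_n|r_n}\to\varphi_R$, $\varphi_{\Theta|r_n}\to\varphi_\Theta$, should reproduce exactly \eqref{eq:Qcov}. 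Tightness in $L^2(\mu)$ should come from the variance bound, since $np_n\int_A\var(D_n)\,d\mu\le C\int_A(1\wedge|s|^\beta)(1\wedge|t|^2)\,d\mu$ is uniformly small over sets $A$ that shrink to the origin, escape to infinity, or have small $\mu$-measure. The continuous mapping theorem for $f\mapsto\|f\|_{L^2(\mu)}^2$ then gives $\int np_n|D_n|^2\,d\mu\cid\int|Q|^2\,d\mu$, and Slutsky with $\psi_n(0,0)^{-3}\cip1$ finishes the proof. The main obstacle I anticipate is the functional tightness in the weighted, and in general infinite, space $L^2(\mu)$, together with the U-statistic variance expansion that feeds it; the individual moment bounds, though numerous, are routine.
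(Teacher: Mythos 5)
Your decomposition is genuinely different from the paper's. You symmetrize the sample characteristic-function product into the V-statistic
\[
	D_n(s,t)=\frac1{2(np_n)^2}\sum_{j,k}(e^{\,isR_j/r_n}-e^{\,isR_k/r_n})(e^{\,it^T\bT_j}-e^{\,it^T\bT_k})\mathbf{1}_{\{R_j>r_n\}}\mathbf{1}_{\{R_k>r_n\}}
\]
and plan a Hoeffding decomposition to extract a linear projection (whose variance is $O(n^3p_n^3)$ at the unnormalized level) plus a degenerate remainder ($O(n^2p_n^2)$). The paper instead centres the summands directly by the \emph{conditional} marginal characteristic functions, which collapses $C_n$ algebraically into a single i.i.d.\ average $\tilde E_1=\frac1n\sum_j(U_{jn}V_{jn}/p_n-\E[U_{jn}V_{jn}]/p_n)$, a product $\tilde E_{21}\tilde E_{22}$ of two independent $O_P((np_n)^{-1/2})$ averages, and the deterministic bias $\tilde E_3$ — no Hoeffding machinery needed. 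Your Hoeffding projection $h_1$ is (up to a multiple of $p_n$) exactly $U_{jn}V_{jn}+b_n\mathbf{1}_{\{R_j>r_n\}}$, so the two routes identify the same leading term; the paper's is slightly more economical because the projection is already explicit. Your moment bounds, the identification $\E D_n=(1-n^{-1})b_n$, the use of \eqref{eq:cond} to kill $\sqrt{np_n}\,\E D_n$ in $L^2(\mu)$, and the part (1) argument via $L^1(\mu)$-domination and DCT all line up with the paper.

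The genuine gap is the functional step, which you correctly flag as the obstacle but do not resolve: a pointwise variance bound
\[
	np_n\,\var\bigl(D_n(s,t)\bigr)\le C(1\wedge|s|^\beta)(1\wedge|t|^2)
\]
is \emph{not} sufficient for weak convergence of $\sqrt{np_n}D_n$ in $L^2(\mu)$, nor (after restricting to a compact $K_\delta$) for convergence of $\int_{K_\delta}np_n|D_n|^2\,d\mu$. Bounded $\E\|\cdot\|_{L^2(\mu)}^2$ does not imply tightness in an infinite-dimensional Hilbert space (think $\xi_n=e_n$ for an orthonormal basis), and controlling $\mu$-mass near the origin, at infinity, and on thin sets addresses only the tail of the integral, not the ``equicontinuity'' of the process. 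What is actually needed — and what the paper supplies — is control of the \emph{increments}: a Bickel--Wichura type moment condition of the form
\[
	\E\bigl|\sqrt{np_n}\,\tilde E_{11}(B)\bigr|^2\le c\,|s-s'|^\beta\prod_{k=1}^d|t_k-t_k'|^\beta
\]
over rectangles $B$, which yields tightness in $\mathcal C(K_\delta)$, hence by continuous mapping $\int_{K_\delta}np_n|\tilde E_1|^2\,d\mu\cid\int_{K_\delta}|Q|^2\,d\mu$, after which the tail integral $\int_{K_\delta^c}$ is absorbed by the pointwise variance bound and Markov's inequality. To complete your proof you would need to establish an analogous increment bound for $D_n$ (or for its linear projection), not just the pointwise bound; the required estimate is obtainable from the same Taylor/Potter inequalities you use, but it is not a corollary of the variance bound you wrote down.
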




\begin{remark}

In the case where $\mathbf{X}$ is regularly varying with index $\alpha\le1$, similar results hold if we replace $R/r_n$ with $\log(R/r_n)$ {for which all moments exist.}

\end{remark}


The proof of the theorem is delayed to Appendix~\ref{app:1}. {In the following remark, we discuss certain sufficient conditions for assumption \eqref{eq:cond}.}

\begin{remark}\label{rm:sec:order}

{Assume that} $\mu = \mu_1 \times \mu_2$, where $\mu_1,\mu_2$ are measures on $\bbr$ and $\bbr^d$, respectively, and symmetric about the origin. From Section~2.2 of \cite{davis:matsui:mikosch:wan:2016}, condition \eqref{eq:cond} is equivalent to
{
\beam
&&np_n\,\left(\E[\tilde \mu_1 (\frac{R}{r_n}-\frac{R'}{r_n})\,\tilde\mu_2(\mathbf\Theta - \mathbf\Theta')|R,R'>r_n] \right.\qquad\nonumber\\
&&\qquad\quad\left.+\E[\tilde \mu_1(\frac{R}{r_n}-\frac{R'}{r_n})|R,R'>r_n]\,\E[ \tilde \mu_2(\mathbf\Theta - \mathbf\Theta')|R,R'>r_n] \right.\nonumber\\
&&\qquad \qquad \left.- 2\,\E[\tilde\mu_1(\frac{R}{r_n}-\frac{R'}{r_n})
 \tilde \mu_2(\mathbf\Theta - \mathbf\Theta'')|R,R'>r_n]\right) \,\to\, 0, \label{eq:conv:1}
\eeam
where
\beao
\tilde \mu_i(x)= \int(1-\cos (x^Ts)) \,\mu_i(ds),\,\quad\, i=1,2.
\eeao
Let $P_{\frac{R}{r_n},\Theta|r_n}$ denote the conditional joint distribution of $(R/r_n,\Theta)$ given $R>r_n$ and $P_{\frac{R}{r_n}|r_n}, P_{\Theta|r_n}$ be the respective conditional marginals. Then \eqref{eq:conv:1} can be expressed as
\beam
&&np_n \int_{(1,\infty)\times\bbs^{d-1}}\int_{(1,\infty)\times\bbs^{d-1}} \tilde \mu_1 (T-T')\,\tilde\mu_2(\mathbf\Theta - \mathbf\Theta')\nonumber \\
&& \quad \left(P_{\frac{R}{r_n},\Theta|r_n}(dT,d\mathbf\Theta) P_{\frac{R}{r_n},\Theta|r_n}(dT',d\mathbf\Theta')  \right. \nonumber\\
&&  \qquad \left. +P_{\frac{R}{r_n}|r_n}(dT)P_{\Theta|r_n}(d\mathbf\Theta)P_{\frac{R}{r_n}|r_n}(dT')P_{\Theta|r_n}(d\mathbf\Theta') \right. \nonumber\\
&& \qquad \quad \left.- 2 P_{\frac{R}{r_n},\Theta|r_n}(dT,d\mathbf\Theta) P_{\frac{R}{r_n}|r_n}(dT')P_{\Theta|r_n}(d\mathbf\Theta')  \right)\nonumber\\
&=& \int_{(1,\infty)\times\bbs^{d-1}}\int_{(1,\infty)\times\bbs^{d-1}} \tilde \mu_1 (T-T') \,\tilde\mu_2(\mathbf\Theta - \mathbf\Theta') \,\nonumber\\
&& \quad \sqrt{np_n} \left(P_{\frac{R}{r_n},\Theta|r_n}(dT,d\mathbf\Theta) - P_{\frac{R}{r_n}|r_n}(dT)P_{\Theta|r_n}(d\mathbf\Theta)\right) \nonumber\\
&&  \qquad  \sqrt{np_n} \left( P_{\frac{R}{r_n},\Theta|r_n}(dT',d\mathbf\Theta')-P_{\frac{R}{r_n}|r_n}(dT')P_{\Theta|r_n}(d\mathbf\Theta')  \right) \,\to\, 0,\label{eq:conv:cond} 
\eeam
where
$(R', \mathbf\Theta')$, $(R'', \mathbf\Theta'')$ are iid copies of $(R,\mathbf\Theta)$.
}
One way to verify {\eqref{eq:conv:cond}} is to assume a second-order like condition on the distribution of $(R,\mathbf\Theta)$. For example, assume that
\beqo \label{eq:sec:order}
\frac{P_{{\frac{R}{r_n}},\Theta|r_n} - \nu_\alpha \times S}{A(r_n)} \ciw \chi,
\eeqo
where $\chi$ is a signed measure such that $\chi([r,\infty]\times B)$ is finite for all $r\ge1$ and $B$ Borel set in $\bbs^{d-1}$, the unit sphere in $\bbr^d$, and the scalar function $A(t)\to 0$ as $t\to\infty$. {When the components of $\mathbf{X}$ are asymptotically independent, this is equivalent to the second order condition for multivariate regular variation \citep{resnick:2002}.} If we choose the sequence $r_n$ such that $\sqrt{np_n} \to\infty$ and $\sqrt{np_n} A(r_n) \to 0$, then
\beao
&&\sqrt{np_n}A(r_n)\, \frac{P_{{\frac{R}{r_n}},\Theta|r_n}((\cdot,\cdot)) - P_{{\frac{R}{r_n}}|r_n}(\cdot)\times P_{\Theta|r_n}(\cdot)}{A(r_n)} \\
&=& \sqrt{np_n}A(r_n)\, \left( 
\frac{P_{{\frac{R}{r_n}},\Theta|r_n}((\cdot,\cdot)) - \nu_\alpha \times S((\cdot,\cdot))}{A(r_n)}
\right. - 
\frac{{\left(P_{{\frac{R}{r_n}}|r_n}(\cdot) - \nu_\alpha(\cdot)\right)}\times {P_{\Theta|r_n}(\cdot)}}{A(r_n)}  \\
&& \quad
\left. -\frac{\nu_\alpha(\cdot) \times (P_{\Theta|r_n}(\cdot) - S(\cdot))}{A(r_n)} \right) \,\ciw\, 0
\eeao
on $[1,\infty] \times \bbs^{d-1}$.
{In the case where $\mu_1,\mu_2$ are finite measures, $\tilde\mu_1,\tilde\mu_2$ are bounded and \eqref{eq:conv:cond} is satisfied since the integrand can be written as}
\beao
&&np_nA^2(r_n)\,\int_{(1,\infty)\times\bbs^{d-1}}\left[\int_{(1,\infty)\times\bbs^{d-1}}\tilde \mu_1 (T-T')\,\tilde\mu_2(\mathbf\Theta - \mathbf\Theta')  \  \right.\\
&& \left.\qquad \frac{{P_{\frac{R}{r_n},\Theta|r_n}(dT,d\mathbf\Theta)-P_{\frac{R}{r_n}|r_n}(dT)P_{\Theta|r_n}(d\mathbf\Theta)}}{A(r_n)}\right] \, \\
&& \qquad \qquad\frac{{P_{\frac{R}{r_n},\Theta|r_n}(dT',d\mathbf\Theta')-P_{\frac{R}{r_n}|r_n}(dT')P_{\Theta|r_n}(d\mathbf\Theta')}}{A(r_n)}\ \to\  0.
\eeao
In the special case that $|A| \in RV_\rho$ for $\rho < 0$, {\eqref{eq:conv:cond} is met provided $r_n$ is} chosen such that
$$
O(n^{\frac{1}{\alpha+2{|\rho|}}+\epsilon}) \le r_n \le o(n^{\frac{1}{\alpha}}), \quad \text{for some $\epsilon>0$.}
$$

\end{remark}

{When the measures $\mu_1,\mu_2$ are infinite, \eqref{eq:cond} can be verified in specific cases.  This is illustrated in the following example.}

\bexam \label{ex:bilogistic}
Let $\mathbf{X}$ follow a bivariate logistic distribution, {i.e.,} $\mathbf{X}$ has cdf
\beqq \label{eq:logistic}
\P(X_1<x_1,X_2<x_2) = \exp(-(x_1^{-1/\gamma}+x_2^{-1/\gamma})^\gamma),\,\quad\, \gamma \in (0,1{)}.
\eeqq
{Then $\mathbf{X}$ has asymptotically independent components} if and only if $\gamma=1$.  It can be shown that $\mathbf{X}$ is regularly varying with index $\alpha=1$, i.e., $p_n = \P(R>r_n)\sim r_n^{-1}$ as $r_n\to\infty$. {Using the $L_1$-norm, $\|(x_1,x_2)\|=|x_1|+|x_2|$,} the pseudo-polar coordinate transform is $(R,\Theta) = (X_1+X_2,X_1/(X_1+X_2)) {\in (0,\infty) \times [0,1]}$ and the pdf of $(R,\Theta)$ is
\beam
f_{R,\Theta}(r,\theta) &=& r^{-2} \left(\theta(1-\theta)\right)^{-\frac{\gamma+1}{\gamma}}\left(\theta^{-\frac{1}{\gamma}}+(1-\theta)^{-\frac{1}{\gamma}}\right)^{\gamma-2}\, e^{-r^{-1}\left(\theta^{-\frac{1}{\gamma}}+(1-\theta)^{-\frac{1}{\gamma}}\right)^\gamma}\nonumber\\
&&\quad  \left(r^{-1}\left(\theta^{-\frac{1}{\gamma}}+(1-\theta)^{-\frac{1}{\gamma}}\right)^\gamma-\frac{\gamma-1}{\gamma}\right). \nonumber
\eeam
We now consider the case of the infinite weight measure $\mu$ given in \eqref{eq:dcormeas} with $\kappa=1$ and derive the condition on the sequence $\{r_k\}$ for which the conditions of Theorem~\ref{thm:1} hold. First observe that
\beam
 f_{\frac{R}{r_n},\Theta|r_n}(t,\theta) &=& t^{-2} \left(\theta(1-\theta)\right)^{-\frac{\gamma+1}{\gamma}}\left(\theta^{-\frac{1}{\gamma}}+(1-\theta)^{-\frac{1}{\gamma}}\right)^{\gamma-2}\, e^{-r_n^{-1}t^{-1}\left(\theta^{-\frac{1}{\gamma}}+(1-\theta)^{-\frac{1}{\gamma}}\right)^\gamma}\nonumber\\
&&\quad  \left(r_n^{-1}t^{-1}\left(\theta^{-\frac{1}{\gamma}}+(1-\theta)^{-\frac{1}{\gamma}}\right)^\gamma-\frac{\gamma-1}{\gamma}\right) \nonumber\\
 &\to&  t^{-2}\frac{1-\gamma}{\gamma}\left(\theta(1-\theta)\right)^{-\frac{\gamma+1}{\gamma}}\left(\theta^{-\frac{1}{\gamma}}+(1-\theta)^{-\frac{1}{\gamma}}\right)^{\gamma-2}, \quad \text{as $n\to\infty$,}  \label{eq:bilog:angmeas}\\
 &=:& f_T(t)f_\Theta(\theta), \nonumber
\eeam
and
\beao
	&&r_n\left|f_{\frac{R}{r_n},\Theta|r_n}(t,\theta)-f_T(t)f_\Theta(\theta)\right| \\
 	&\le& f_T(t)f_\Theta(\theta)\left(r_n \left| e^{-r_n^{-1}t^{-1}\left(\theta^{-\frac{1}{\gamma}}+(1-\theta)^{-\frac{1}{\gamma}}\right)^\gamma}-1\right| \right. \\
	&&\quad \left.+  e^{-r_n^{-1}t^{-1}\left(\theta^{-\frac{1}{\gamma}}+(1-\theta)^{-\frac{1}{\gamma}}\right)^\gamma}t^{-1}\left(\theta^{-\frac{1}{\gamma}}+(1-\theta)^{-\frac{1}{\gamma}}\right)^\gamma \frac{\gamma}{1-\gamma}\right) \\
	&\le& f_T(t)f_\Theta(\theta)\left(t^{-1}\left(\theta^{-\frac{1}{\gamma}}+(1-\theta)^{-\frac{1}{\gamma}}\right)^\gamma+ t^{-1}\left(\theta^{-\frac{1}{\gamma}}+(1-\theta)^{-\frac{1}{\gamma}}\right)^\gamma \frac{\gamma}{1-\gamma}\right) \\
	&\le& t^{-3} \left(\theta^{-\frac{1}{\gamma}}+(1-\theta)^{-\frac{1}{\gamma}}\right)^{2\gamma-2} \frac{1}{1-\gamma} \\
	&\le& ct^{-3}, \quad \text{for $t\ge1$ and $\theta\in[0,1]$},
\eeao
where $c$ denotes a generic a constant whose value may change from line to line throughout the paper, and the last inequality comes from the facts that
$$
	\theta(1-\theta) \le \frac{1}{4} \quad\text{and}\quad \left(\theta^{-\frac{1}{\gamma}}+(1-\theta)^{-\frac{1}{\gamma}}\right)^{2\gamma-2}\le\left(\frac12\right)^{\frac{2-2\gamma}{\gamma}}<\infty.
$$
Letting
\beao
 h_n(t,\theta)&:=&\frac{f_{\frac{R}{r_n},\Theta|r_n}(t,\theta)-f_{\frac{R}{r_n}|r_n}(t)f_{\Theta|r_n}(\theta)}{r_n^{-1}},
\eeao
we have
\beao
	&&\hspace{-.5in}\max\left(\int_0^1\int_1^\infty |h_n(t,\theta)|dtd\theta, \int_0^1\int_1^\infty |\log(t)h_n(t,\theta)|dtd\theta \right) \\
	&\le&\int_0^1\int_1^\infty |th_n(t,\theta)|dtd\theta \\
	&\le& \int_0^1\int_1^\infty \left|\frac{f_{\frac{R}{r_n},\Theta|r_n}(t,\theta)-f_T(t)f_\Theta(\theta)}{t^{-1}r_n^{-1}}\right|dtd\theta \\
	&& \quad+ \int_0^1\int_1^\infty\left|f_T(t)\frac{f_{\Theta|r_n}(\theta)-f_\Theta(\theta)}{t^{-1}r_n^{-1}}\right|dtd\theta \\
	&&\qquad +\int_0^1\int_1^\infty \left|f_{\Theta|r_n}(\theta)\frac{f_{\frac{R}{r_n}|r_n}(t)-f_T(t)}{t^{-1}r_n^{-1}}\right| dtd\theta,
\eeao
where the first term can be bounded by
\beao
	 \int_0^1\int_1^\infty ct^{-2}dtd\theta <\infty,
\eeao
and the other terms can be bounded in the same way.
Since $R$ has infinite first moment, we apply the distance correlation to $\log R$ and $\mathbf\Theta$. The integral in \eqref{eq:conv:cond} is bounded by
\beao
	&&\frac{np_n}{r_n^2}\int_0^1\int_1^\infty\int_0^1\int_1^\infty |\log t-\log t'||\theta-\theta'| |h_n(t,\theta)||h_n(t',\theta')| dtd\theta dt'd\theta' \\
	&\le& c\frac{n}{r_n^3}\int_0^1\int_1^\infty\int_0^1\int_1^\infty (|\log t|+|\log t'|) |h_n(t,\theta)||h_n(t',\theta')| dtd\theta dt'd\theta' \\		
	&\le&c\frac{n}{r_n^3}\left(\int_0^1\int_1^\infty |\log(t) h_n(t,\theta)|dtd\theta\right) \left(\int_0^1\int_1^\infty|h_n(t,\theta)| dtd\theta\right) \, \le \, c\frac{n}{r_n^3},
\eeao
which converges to zero if $n=o(r_n^3)$.  Therefore if $\{r_n\}$ is chosen such that $r_n=o(n)$ and $n=o(r_n^3)$, then Theorem~\ref{thm:1} holds.


\eexam


The result in Theorem~\ref{thm:1} can be generalized from iid to a regularly varying time series setting, which we present in the next theorem. For a multivariate stationary time series $\{\bfX_t\}$ and $h\ge1$, set $\bfY_h = (\bfX_0,\ldots,\bfX_h)$.  Then $\{\bfX_t\}$ is regularly varying if
$$
	\frac{\P(x^{-1}\bfY_h \in \cdot)}{\P(x^{-1}\|\bfX_0\| >1)} \overset{v}\to \mu^*_h(\cdot), \quad x\to\infty,
$$
for some non-null measure $\mu^*_h$ on $\overline{\bbr}^{(h+1)d}_0=\overline{\bbr}^{(h+1)d}\backslash \{\bf0\}$, $\overline{\bbr} = \bbr \cup \{\pm\infty\}$, with the property that $\mu^*_h(tC)=t^{-\alpha}\mu^*_h(C)$ for any $t>0$ and Borel set $C\subset \overline{\bbr}^{(h+1)d}_0$.  See, for example, page 979 of \cite{davis:mikosch:2009}.  It follows easily that
\beqq \label{eq:mrv:conv}
	\frac{\P(x^{-1}(\bfX_0,\bfX_h)\in\cdot)}{\P(\|\bfX_0\| >x)} \overset{v}\to \mu_h(\cdot),
\eeqq
where
$$
	\mu_h(D) = C\cdot \mu_h^* (\{\mathbf{s} \in \overline{\bbr}^{(h+1)d}: (\mathbf{s}_1,\mathbf{s}_h) \in D\}).
$$
Assume that $\{\bfX_t\}$ is $\alpha$-mixing.  We assume the following conditions between $\{\bfX_t\}$ and the sequence of threshold $\{r_n\}$, which can be verified for various time series models \citep{davis:mikosch:2009}.
\begin{enumerate}
\item[]
\begin{enumerate}
\item[(\bf M)] \label{cond:m}
Assume $\revise{p_n^{-1}=\P^{-1}(\|\bfX_1\|>r_n)}=o(n^{1/3})$ and that there exists a sequence $\{l_n\}$ such that $l_n\to\infty$, $l_np_n\to0$, and\\
i)
	\beqq \label{eq:lm:cond:1}
		\revise{\left(\frac1{p_n}\right)^\delta \sum_{h=l_n}^\infty \alpha_h^\delta \to 0 \text{ for some $\delta\in(0,1)$;}}
	\eeqq 
ii)
	\beqq \label{eq:lm:cond:2}
		\lim_{h\to\infty} \limsup_{n\to\infty} \frac{1}{p_n} \sum_{j=h}^{l_n} \P(\|\bfX_0\|>r_n, \|\bfX_j\|>r_n) =0;
	\eeqq 
iii) 
	\beqq \label{eq:lm:cond:3}
		np_n \alpha_{l_n} \to 0.
	\eeqq
\end{enumerate}
\end{enumerate}

\begin{theorem} \label{thm:2}

Let $\{\mathbf{X}_t\}$ be a multivariate regularly varying time series with tail index $\alpha>1$ and $\alpha$-mixing with coefficients $\{\alpha_h\}_{h\ge0}$. 
Assume the same conditions for the weight measure $\mu$ and the sequence of thresholds $\{r_n\}$ in Theorem~\ref{thm:1}, i.e., \eqref{eq:weight}, \eqref{eq:cond} hold,  and that condition~\hyperref[cond:m]{({\bf M})} holds.  Then
	\beqo \label{eq:depthm}
		n\hat p_nT_n \cid \int_{\bbr^{d+1}} |Q'(s,t)|^2\mu(ds,dt),
	\eeqo
where $Q'$ is a centered Gaussian process.  In particular,
	\beqo
		T_n \cip 0.
	\eeqo
	
\end{theorem}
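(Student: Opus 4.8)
The plan is to reduce Theorem~\ref{thm:2} to the machinery of Theorem~\ref{thm:1} augmented by an $\alpha$-mixing decoupling step. Write $\mathbb{H}:=L^2(\mu)$ for the Hilbert space of (complex-valued) functions on $\bbr^{d+1}$ equipped with $\langle f,g\rangle_{\mathbb H}=\int_{\bbr^{d+1}} f\bar g\,d\mu$, and note
$$
n\hat p_nT_n=\int_{\bbr^{d+1}}\big|\sqrt{n\hat p_n}\,C_n(s,t)\big|^2\mu(ds,dt)=\big\|\sqrt{n\hat p_n}\,C_n\big\|_{\mathbb H}^2 .
$$
So it suffices to prove the functional limit $\sqrt{n\hat p_n}\,C_n\cid Q'$ in $\mathbb H$ for a centered Gaussian element $Q'$; the conclusions then follow by the continuous mapping theorem applied to the (continuous) squared-norm functional $x\mapsto\|x\|_{\mathbb H}^2$, which gives $n\hat p_nT_n\cid\int_{\bbr^{d+1}}|Q'(s,t)|^2\mu(ds,dt)$.

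First I would record two reductions. (i) A second-moment computation for $\hat p_n$, using the mixing and near-range conditions in \hyperref[cond:m]{({\bf M})} exactly as in the variance estimate below, gives $\var(\hat p_n)=O(p_n/n)$, and since $p_n^{-1}=o(n^{1/3})$ forces $np_n\to\infty$, we get $\hat p_n/p_n\cip1$; hence $n\hat p_n$ may be replaced by $np_n$ throughout up to a $1+o_P(1)$ factor. (ii) Split $C_n=G_n+B_n$, where $B_n(s,t):=\varphi_{\frac{R}{r_n},\Theta|r_n}(s,t)-\varphi_{\frac{R}{r_n}|r_n}(s)\varphi_{\Theta|r_n}(t)$ is the deterministic bias and $G_n$ collects the centered empirical fluctuations of the joint and cross-multiplied marginal empirical characteristic functions. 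Assumption \eqref{eq:cond} says precisely that $\|\sqrt{np_n}\,B_n\|_{\mathbb H}^2=np_n\int_{\bbr^{d+1}}|B_n(s,t)|^2\mu(ds,dt)\to0$, so $B_n$ is $\mathbb H$-negligible. Because the one-dimensional law of $(R,\mathbf\Theta)$ is the same as in the iid case, the marginal sub-products inside $G_n$ linearize exactly as in the proof of Theorem~\ref{thm:1}, and everything reduces to the joint behaviour, as a process in $(s,t)$, of the triangular array
$$
U_n(s,t):=\frac{1}{\sqrt{np_n}}\sum_{j=1}^n\Big(e^{\,isR_j/r_n+it^T\mathbf\Theta_j}\mathbf 1_{\{R_j>r_n\}}-\E\big[e^{\,isR/r_n+it^T\mathbf\Theta}\mathbf 1_{\{R>r_n\}}\big]\Big)
$$
and its evaluations at $(s,0)$ and $(0,t)$.

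The core step is the $\mathbb H$-valued FCLT for $U_n$, established via finite-dimensional convergence plus tightness. For finite-dimensional distributions at points $(s_1,t_1),\dots,(s_m,t_m)$, the relevant sum is a stationary $\alpha$-mixing triangular array of bounded summands carrying the rare-event indicator $\mathbf 1_{\{R_j>r_n\}}$, each of mean-square size $O(p_n)$. I would apply a big-block/small-block decomposition with block length $l_n$ from \hyperref[cond:m]{({\bf M})}: \eqref{eq:lm:cond:3} decouples the big blocks into an asymptotically independent array, \eqref{eq:lm:cond:1} kills the small-block remainders, and a Lyapunov condition — valid because $p_n^{-1}=o(n^{1/3})$ bounds the ratio of the third absolute moment of a block sum to the $3/2$ power of its variance — yields asymptotic normality. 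The limiting covariance consists of the iid expression from \eqref{eq:Qcov} (diagonal contribution) plus near-range cross terms $\lim_n p_n^{-1}\,\mathrm{Cov}(\cdot_0,\cdot_h)$ for each fixed $h\ge1$, which exist and are controlled by the measures $\mu_h$ of \eqref{eq:mrv:conv}; the mid-range terms $h\le j\le l_n$ vanish by \eqref{eq:lm:cond:2}. This identifies $Q'$ as a centered Gaussian process (reducing to $Q$ of Theorem~\ref{thm:1} when the $\mu_h$ contributions vanish). For $\mathbb H$-tightness I would follow the moment argument of \cite{davis:matsui:mikosch:wan:2016}: bound $\E|\sqrt{np_n}(G_n(s,t)-G_n(s',t'))|^2$, using an $\alpha$-mixing covariance inequality to absorb the dependence, by the increments $2\wedge(|s-s'|\,R/r_n)$ radially and $|t-t'|\,|\mathbf\Theta|\le|t-t'|$ angularly; taking the conditional expectation produces factors $(1\wedge|s-s'|^\beta)$ and $(1\wedge|t-t'|^2)$, where finiteness of the required $\beta$-th moment of $R/r_n$ on $\{R>r_n\}$ uses $\beta<\alpha$, and matching these against the weight condition \eqref{eq:weight} gives tightness and, in particular, that $G_n$ and $Q'$ lie in $\mathbb H$ almost surely.

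Combining the two ingredients yields $\sqrt{np_n}\,G_n\cid Q'$ in $\mathbb H$, hence (by reduction (ii) and Slutsky) $\sqrt{n\hat p_n}\,C_n\cid Q'$, and the continuous mapping theorem gives $n\hat p_nT_n\cid\int_{\bbr^{d+1}}|Q'(s,t)|^2\mu(ds,dt)$. For the final assertion, \hyperref[cond:m]{({\bf M})} forces $np_n\to\infty$, so $n\hat p_n\to\infty$ in probability, while $n\hat p_nT_n=O_P(1)$; therefore $T_n=(n\hat p_nT_n)/(n\hat p_n)\cip0$. I expect the main obstacle to be carrying out the blocking CLT and the $\mathbb H$-tightness \emph{simultaneously} for this rare-event array: the mixing-based moment inequalities must be uniform in the weight $\mu$ — both near the origin, where $\mu$ may be infinite, and in the tail — while the normalization is the non-standard $\sqrt{np_n}$ rather than $\sqrt n$. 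This is exactly the balance that \eqref{eq:lm:cond:1}–\eqref{eq:lm:cond:3} together with $p_n^{-1}=o(n^{1/3})$ are calibrated to strike, and checking that the higher-moment (or bracketing) bounds needed for tightness do not blow up under this normalization is the delicate part.
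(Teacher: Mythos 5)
Your proposal is correct and follows essentially the same route as the paper: show $\hat p_n/p_n\cip1$, kill the deterministic bias term via \eqref{eq:cond}, then establish a finite-dimensional CLT for the rare-event centered sum using a big-block/small-block decomposition under condition~\hyperref[cond:m]{({\bf M})} (with the $\mu_h$ cross-terms entering the limiting covariance), and finish with a tightness argument using $\alpha$-mixing covariance inequalities against the weight condition \eqref{eq:weight}. The paper's only organizational difference is that it extracts the blocking CLT as a standalone Lemma (Lemma~\ref{lemma:fancy}, following Davis--Mikosch 2009) and then applies it in two propositions, whereas you inline the argument; the substance is the same.
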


The proof of Theorem~\ref{thm:2} is given in Appendix~\ref{app:2}.

Note that the limiting distributions $Q$ in Theorem~\ref{thm:1} and $Q'$ in Theorem~\ref{thm:2} are both intractable.  In practice, quantiles of the distributions are calculated using resampling methods. While in the iid case this can be done straightforwardly, in the weakly dependent case one needs to apply the block bootstrap or stationary bootstrap to obtain the desired result (see \cite{davis:mikosch:cribben:2012}).  In the following section, we present a threshold selection framework with a subsampling scheme that does not require independence between the observations.


\section{Threshold selection} \label{sec:method}

In this section, we propose a procedure to select the threshold for estimating the spectral measure $S$ from observations $\mathbf{X}_1,\cdots,\bfX_n$. 
Let us first consider the case where a specific threshold $r_n$ is given. Then \eqref{eq:teststat} specifies the empirical distance covariance between $R/r_n$ and $\mathbf\Theta$ conditional on $R>r_n$. Under the assumption \eqref{eq:cond}, we have from Theorem \ref{thm:1},
$$
	n\hat p_n T_n \to \int_{\bbr^{d+1}} |Q{(s,t)}|^2\mu({ds,dt}),
$$
where $n\hat p_n$ is the number of observations such that $R_i>r_n$. In practice, the limit distribution $\int|Q|^2\mu(s,t)$ is intractable, but one can resort to bootstrapping.
Consider the hypothesis testing framework:
\beao
H_0 &:& \mbox{ $R/r_n$ and $\mathbf\Theta$ are independent given $R>r_n$ } \\
H_1 &:& \mbox{ $R/r_n$ and $\mathbf\Theta$ are not independent given $R>r_n$.}
\eeao
Define the $p$-value for testing $H_0$ versus $H_1$ to be
\beqq \label{eq:pv}
pv = \P\left.\left(\int_{\bbr^{d+1}} |Q{(s,t)}|^2\mu(ds,dt)>u\right)\right|_{u=n\hat p_n T_n}.
\eeqq
Then under $H_0$, $pv$ follows $U(0,1)$, while under $H_1$, $pv$ should be sufficiently small.

%
Now consider a decreasing sequence of candidate thresholds $\{r_k\}$. From \eqref{eq:pv}, a sequence of $p$-values $\{pv_k\}$, each corresponding to a threshold $r_k$, can be obtained. Our goal is to find the smallest threshold $r^*$ such that conditional on $R>r^*$, $\Theta$ can reasonably be considered independent of $R$. 
Note that the $pv_k$'s are not independent for each $k$ since they are computed from the same set of data. Conventional multiple testing procedures, such as Bonferroni correction, are problematic to implement for dependent $p$-values. To counter these limitations, we propose an intuitive and direct method based on subsampling.

The idea is outlined as follows: For a fixed level $r_k$, we choose a subsample of size $n_k$ from the conditional empirical cdf $\hat F_{{\frac{R}{r_n},\Theta|r_k}}$ of $({R_i/r_k},\Theta_i)$ with $R_i>r_k$, $i=1,\ldots,n$. For this subsample, we compute the distance covariance $T_{n,k}$. To compute a $p$-value of $T_{n,k}$ under the assumption that the conditional empirical distribution is a product of the conditional marginals, we take a large number ($L$) of subsamples of {size} $n_k$ from 
\beqo
	\tilde F_{{{\frac{R}{r_n}},\Theta|r_k}}(d\theta,dr) = \hat F_{\Theta|{r_k}}(d\theta) \hat F_{{\frac{R}{r_n}}|{r_k}}(dr),
\eeqo
 and calculate the value $\tilde{T}_{n,k}^{(l)}, l=1,\ldots,L$ for each subsample. The $p$-value of $T_{n,k}$, $pv_k$, is then the empirical $p$-value of $T_{n,k}$ relative the $\{\tilde{T}_{n,k}^{(l)}\}_{l=1,\ldots,L}$. This process, starting with an initial subsample of $n_k$ from $\hat F_{{{\frac{R}{r_n}},\Theta|r_k}}$ is repeated $m$ times, which produces $m$ estimates $\{pv_{k}^{(j)}\}_{j=1,\ldots,m}$ of the $pv_k$, {which are independent conditional on the original sample}. These are then averaged
$$
	\overline{pv}_k = \frac{1}{m}\sum_{j=1}^m pv_{k}^{(j)}.
$$
So for the sequence of levels $\{r_k\}$, we produce a sequence of independent $p$-values $\{\overline{pv}_k\}$.


Our choice of threshold $r$ at which $(\Theta,R)|R>r$ are independent (and dependent otherwise) will be based on an examination of the path of the mean $p$-values, $\{\overline{pv}_k\}$. Note the following two observations:
\begin{itemize}
\item
	If $R$ and $\mathbf\Theta$ are independent given $R>r_k$, then the ${pv_{k}^{(1)},\ldots,pv_{k}^{(m)}}$ will be iid {and approximately $U(0,1)$-distributed}, so that $\overline{pv}_k$ should center around 0.5. 
\item
	If $R$ and $\mathbf\Theta$ are dependent given $R>r_k$, then the ${pv_{k}^{(j)}}$'s will be well below 0.5 ({closer} to 0), and so will $\overline{pv}_k$. 
\end{itemize}
{
By studying the sequence $\{\overline{pv}_k\}$, which we call the mean $p$-value path, we choose the threshold to be the smallest $r_k$ such that $\overline{pv}_l$ is around 0.5 for $l<k$. A well-suited change-point method for our situation is the CUSUM algorithm, by \cite{page:1954}, which detects the changes in mean in a sequence by looking at mean-corrected partial sums. In our algorithm, we use a spline fitting method that is based on the CUSUM approach called wild binary segmentation (WBS), proposed by \cite{fryzlewicz:2014}. The WBS procedure uses the CUSUM statistics of subsamples and fits a piecewise constant spline to $\{\overline{pv}_k\}$. In our setting, we may choose $r_k$ to be the knot of the spline after which the fitted value is comfortably below 0.5.
}

There are several advantages to using the subsampling scheme. First, recall that the $p$-value path $\{pv_k\}$, which is obtained from the whole data set, has complicated serial structure and varies greatly from each realization. In contrast, the mean $p$-values $\overline{pv}_k$ from subsampling are {conditionally} independent and will center around 0.5 with small variance when the total sample size $n$ and the number of subsample $m$ is large.  This, in turns, helps to present a justifiable estimation for the threshold. Second, the calculation of distance covariance can be extremely slow for moderate sample size. Using smaller sample sizes for the subsamples, our computational burden is greatly reduced. In addition, this procedure is amenable to parallel computing, reducing the computation time even further. Third, the subsampling makes it possible to accommodate stationary but dependent data, waiving the stringent independent assumption.

The idea of looking at the mean $p$-value path is inspired by \cite{mallik:sen:banerjee:michailidis:2011}, which used the mean of $p$-values from multiple independent tests to detect change points in population means.


\section{Data Illustration} \label{sec:examples}

In this section, we demonstrate our threshold selection method through simulated and real data examples. 

{In practice, we set the sequence of thresholds $\{r_k\}$ to be the corresponding upper quantiles to $\{q_k\}$, a pre-specified sequence of quantile levels. The subsample size $n_k$ at each threshold $r_k$ is set as $n_k=n_0\cdot q_k$ for some $n_0<<n$.  This is designed such that for any $r_k$, each subsample is a $n_0/n$ fraction of all the eligible data points with $R>r_k$.  Then the choice of $\{n_k\}$ boils down to the choice of $n_0$, which should reflect the following considerations: i) $n_0$ should be large enough to ensure good resolution of $p$-values at all levels;  ii) $n_0/n$ should be sufficient small such that the subsamples do not contain too much overlap in observations;  iii) larger $n_0$ requires heavier computation for the distance correlation.  In our examples, where the total sample size $n$ ranges from 3000 to 20000, we find $n_0$ between 500 and 1000 to be a suitable choice. The number of subsamples $m$ can be set as large as computation capacity allows.  In our examples, we take $m=60$.
}

For all the examples, we choose the weight function $\mu$ for distance covariance to be \eqref{eq:dcormeas} with $\kappa=1${, and the number of replications used to calculate each $p$-value is $L=200$}. {To ensure that the moment conditions are met, the distance correlation is applied to the log of} the radial part $R$ {in} all {examples}.


\subsection{Simulated data with known threshold} \label{ex:1}

To illustrate our methodology, we simulate observations from a distribution with a known threshold for which $R$ and $\Theta$ become independent.

Let $R$ be the absolute value of a $t$-distribution with 2 degrees of freedom and {$\Theta_1,\Theta_2$ be independent random variables such that $\Theta_1\sim U(0,1)$, $\Theta_2\sim Beta(3,3)$. Set}
$$
{\Theta |R =
	\begin{cases}
		\Theta_1, & \quad \text{if } R>r_{0.2}, \\
		\Theta_2, &  \quad \text{if } R\le r_{0.2}, \\
	\end{cases}}
$$
where $r_{0.2}$ is the upper $20\%$-quantile of $R$. Then $R$ and $\Theta$ are independent given $R>r$ if and only if $r\ge r_{0.2}$. 
Let $(X_{i1},X_{i2}) = (R_i\Theta_i,R_i(1-\Theta_i))$, $i=1,\ldots,n$, be the simulated observations. We generate $n=10000$ iid observations from this distribution. Figures~\ref{fig1:subfig1}, \ref{fig1:subfig2} and \ref{fig1:subfig3} show the data in Cartesian and polar coordinates. Our goal is to recover the tail angular distribution by choosing the appropriate threshold.

A sequence of candidate thresholds $\{r_k\}$ is selected to be the empirical upper quantiles of $R$ corresponding to {$\{q_k\}$, 150 equidistant points between 0.01 and 0.4}. We apply the procedure described in Section~\ref{sec:method} to the data. For each $r_k$, the mean $p$-value $\overline{pv}_{k}$ is calculated using $m=60$ random subsamples, each of size $n_{k}=500\cdot q_k$, from the observations with $R_i>r_k$. 
Figure~\ref{fig1:subfig4} shows the mean $p$-value path. For the WBS algorithm, we set the threshold to be the largest $r_k$ {such that for all thresholds $r$ (quantile level $q$) such that $r<r_k$ ($q>q_k$), the fitted spline of the $p$-value stays \revise{below} 0.45\footnote{\revise{Of course, other selection rules can be used.  For example, a more conservative approach would be choosing the threshold as the largest $r_k$ such that for $r>r_k$, the fitted spline of the $p$-value stays above 0.45.
}}}. The threshold levels chosen is $20.4\%$, which are in good agreement with the true independence level 0.2. The empirical cdfs of the truncated $\Theta_i$'s corresponding to the chosen thresholds is shown in Figure~\ref{fig1:subfig5}. We can see that the true tail angular cdf (i.e., $U(0,1)$) is accurately recovered.

%
%
%

\begin{figure}[t]
	\begin{subfigure}[]{
		\includegraphics[width=.31\textwidth] {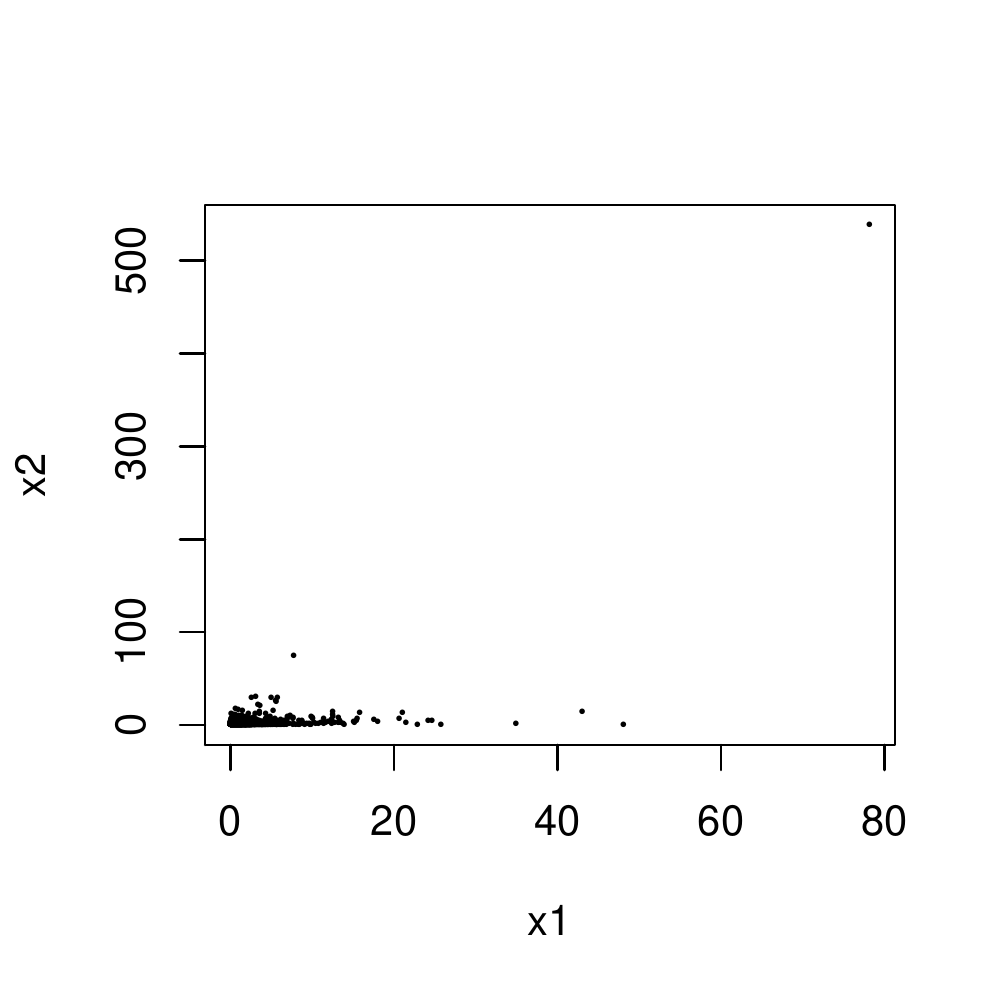}   
		\label{fig1:subfig1} }
	\end{subfigure}
	\begin{subfigure}[]{
		\includegraphics[width=.31\textwidth] {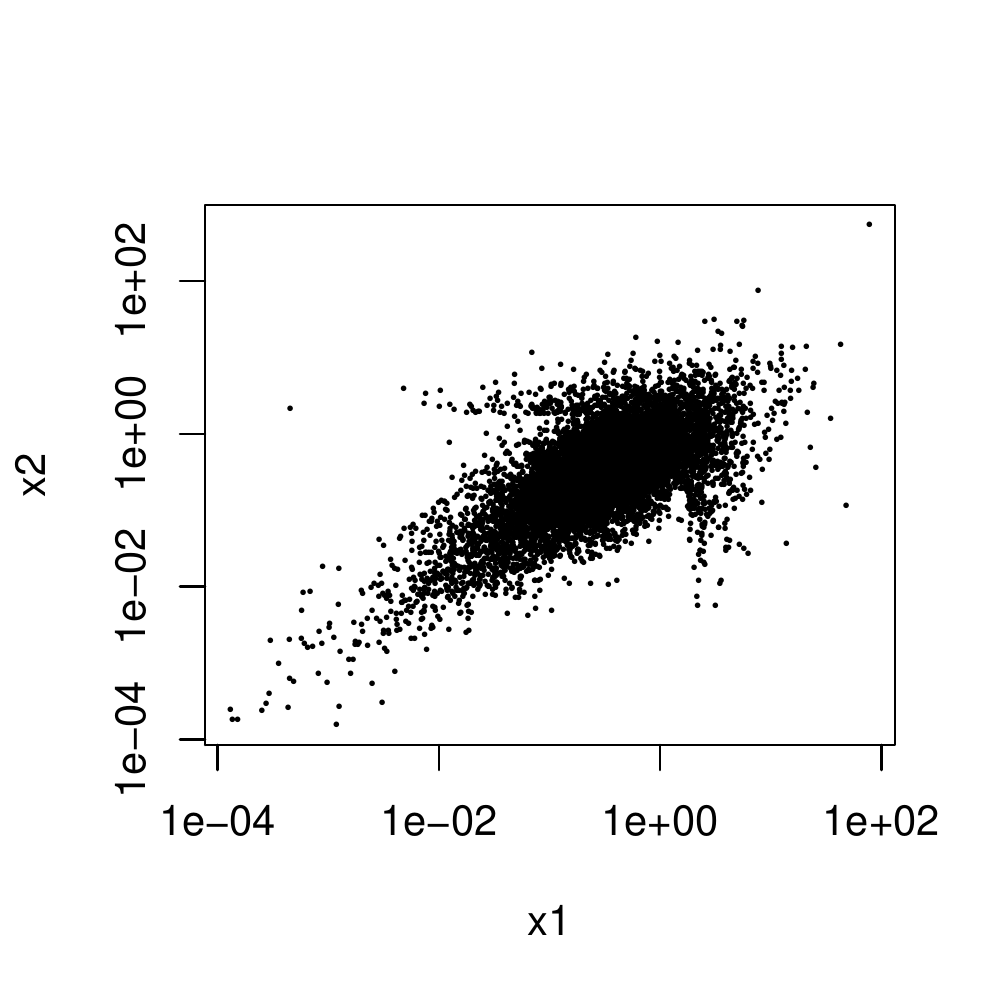}   
		\label{fig1:subfig2} }
	\end{subfigure}
	\begin{subfigure}[]{
		\includegraphics[width=.31\textwidth] {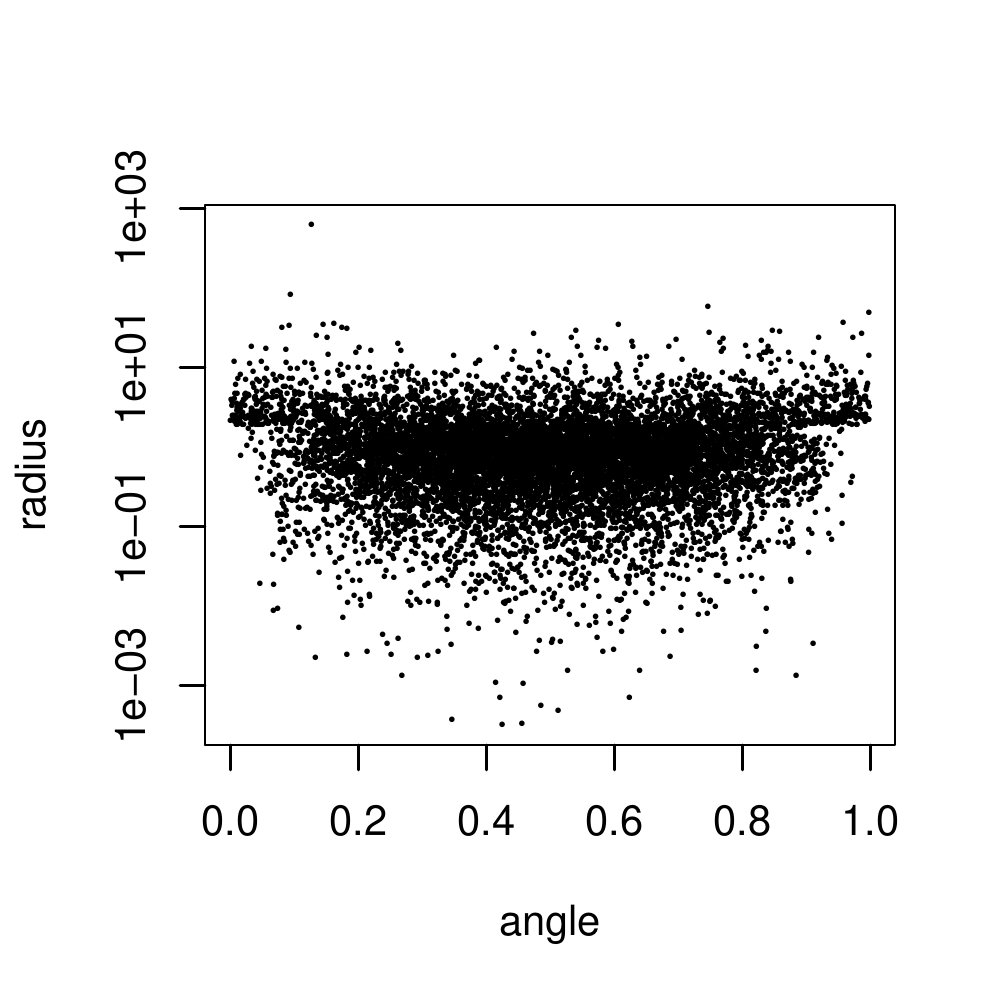}   
		\label{fig1:subfig3} }
	\end{subfigure}
	\begin{subfigure}[]{
		\includegraphics[width=.62\textwidth] {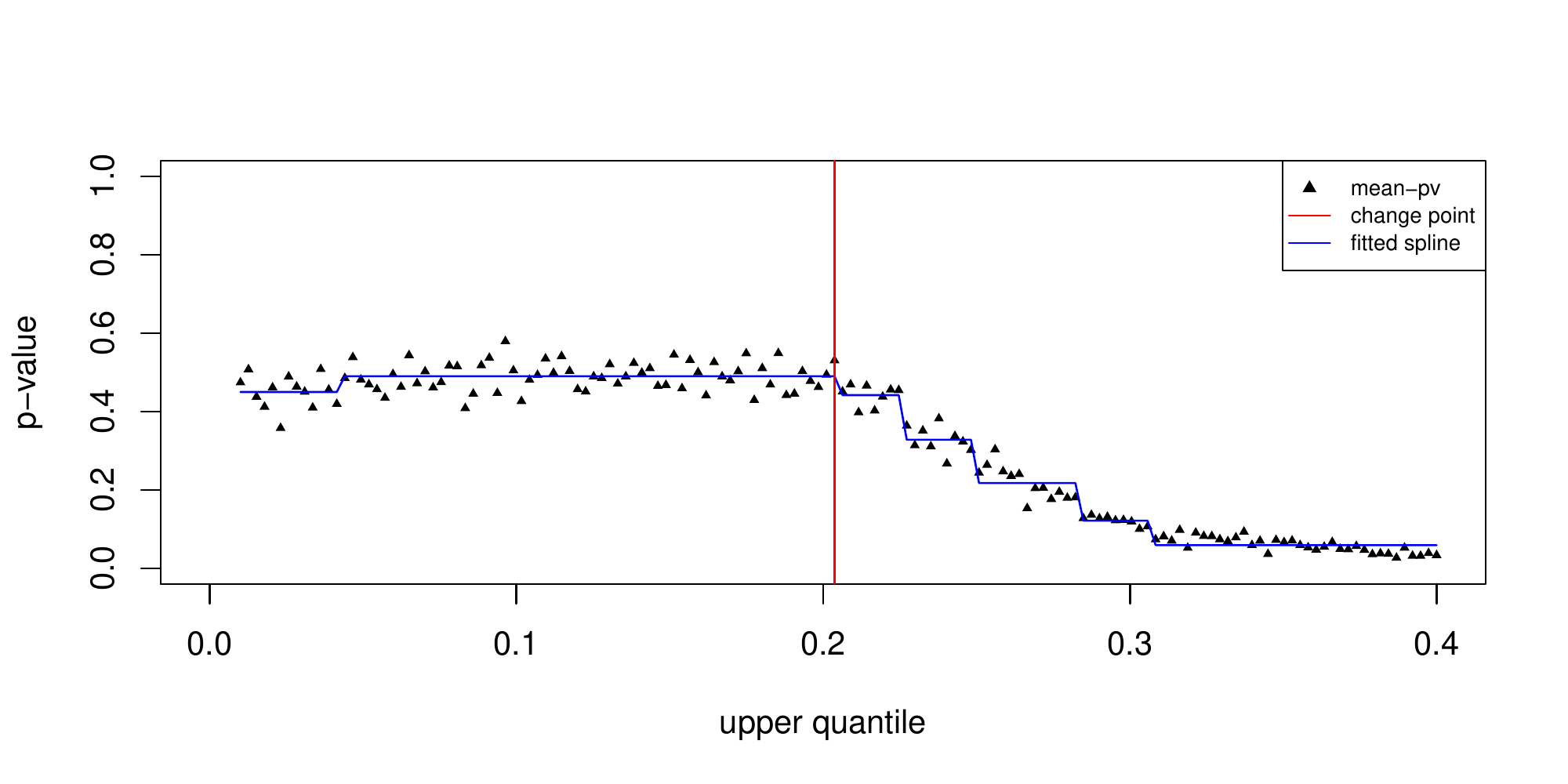}   
		\label{fig1:subfig4} }
	\end{subfigure}
	\begin{subfigure}[]{
		\includegraphics[width=.31\textwidth] {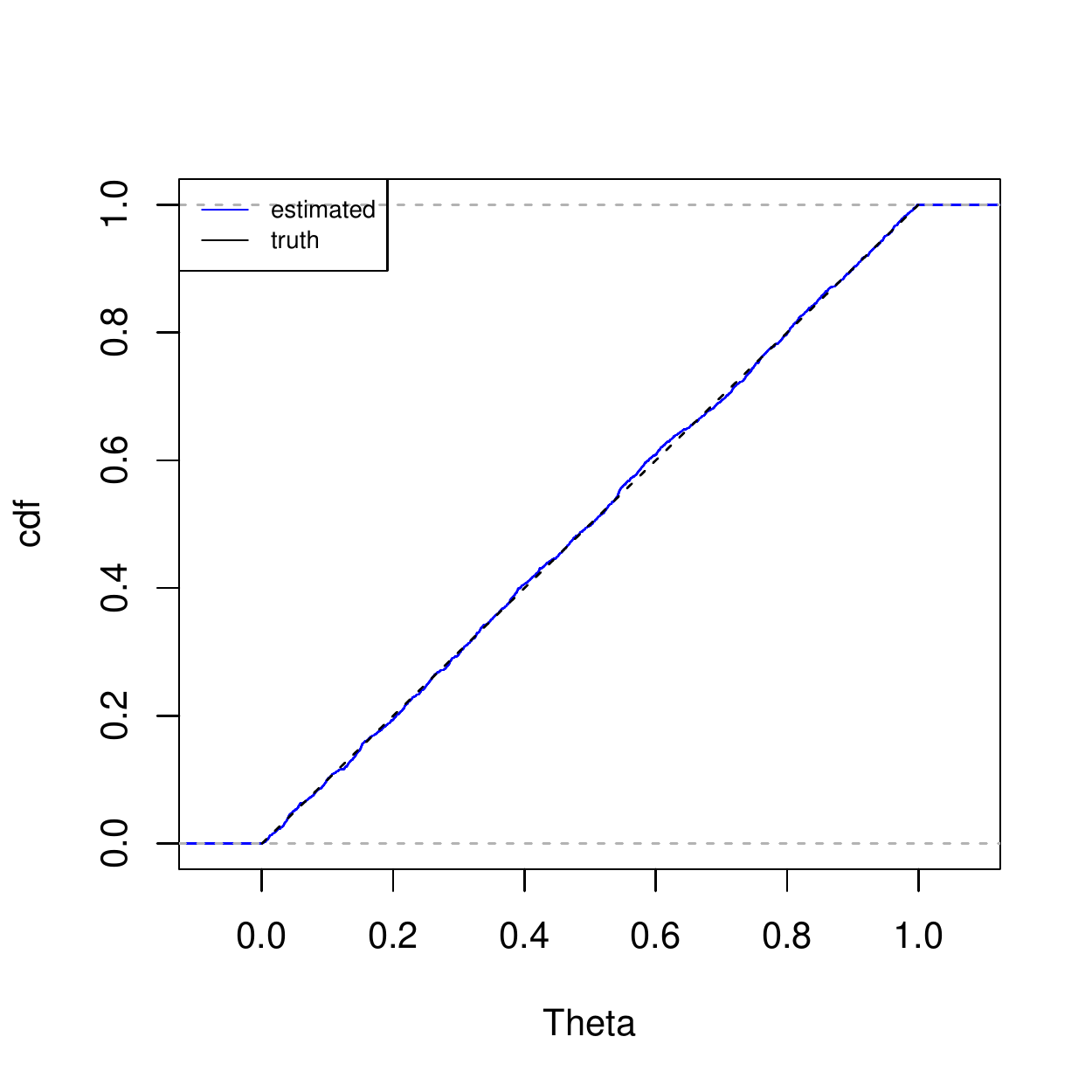}   
		\label{fig1:subfig5} }
	\end{subfigure}
	\label{fig:e1}
	\caption{Example \ref{ex:1}. (a) scatterplot of $(X_{i1},X_{i2})$; (b) scatterplot of $(X_{i1},X_{i2})$ in log-log scale; (c) scatterplot of $(R_i,\Theta_i)$; (d) mean $p$-value path (black triangles), fitted WBS spline (blue line), and the chosen threshold quantile (red vertical line); (e) estimated cdf of $\Theta$ using the threshold chosen, compared with the truth (black dotted).}
\end{figure}


\subsection{Simulated logistic data} \label{ex:2}

We simulate data from a bivariate logistic distribution, which is bivariate regularly varying. Recall from Example \ref{ex:bilogistic} that $(X_1,X_2)$ follows a bivariate logistic distribution {if it has cdf \eqref{eq:logistic}}.
In this example, we set $\gamma=0.8$ and generate $n=10000$ iid observations from this distribution. Similar to the previous example, for each threshold $r_k$ corresponding to the upper $q_k$ quantile, {where $\{q_k\}$ is chosen to be the 150 equidistant points between 0.01 and 0.3}. The mean $p$-value $\overline{pv}_{k}$ is calculated using $m=60$ random subsamples of size $n_{k}=500\cdot q_k$ from the observations with $R_i>r_k$. 

Figures~\ref{fig2:subfig1} , \ref{fig2:subfig2} and \ref{fig2:subfig3} show the scatterplots of the data. Here the $L_1$-norm is used to transform the data into polar coordinates. Our algorithms suggests using $7.4\%$ of the data to estimate the angular distribution. The estimated cdf of the angular distribution is shown with the theoretical limiting cdf, derived from \eqref{eq:bilog:angmeas}, in Figure~\ref{fig2:subfig5}. So even though $R$ and $\Theta$ are not independent for any threshold $r_k$, our procedures produce good estimates of the limiting distribution of $\Theta$.

\begin{figure}[t]
	\begin{subfigure}[]{
		\includegraphics[width=.31\textwidth] {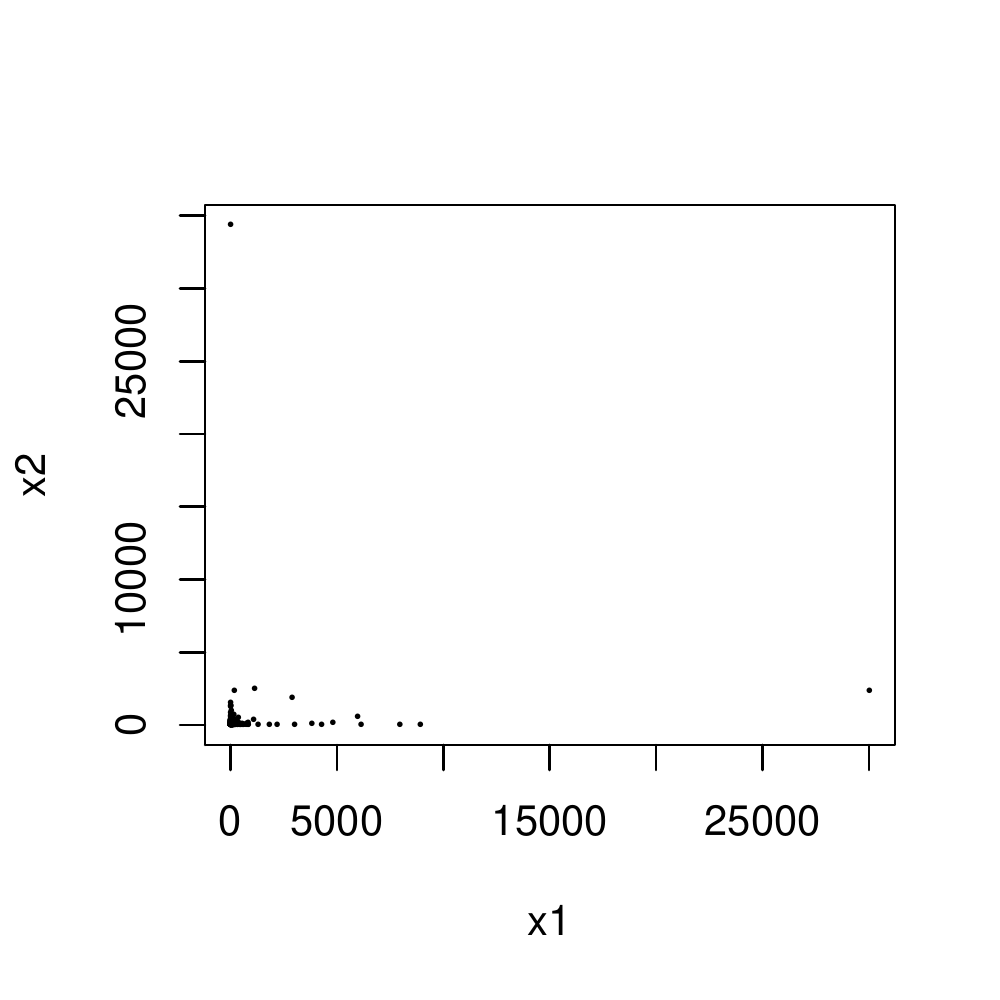}   
		\label{fig2:subfig1} }
	\end{subfigure}
	\begin{subfigure}[]{
		\includegraphics[width=.31\textwidth] {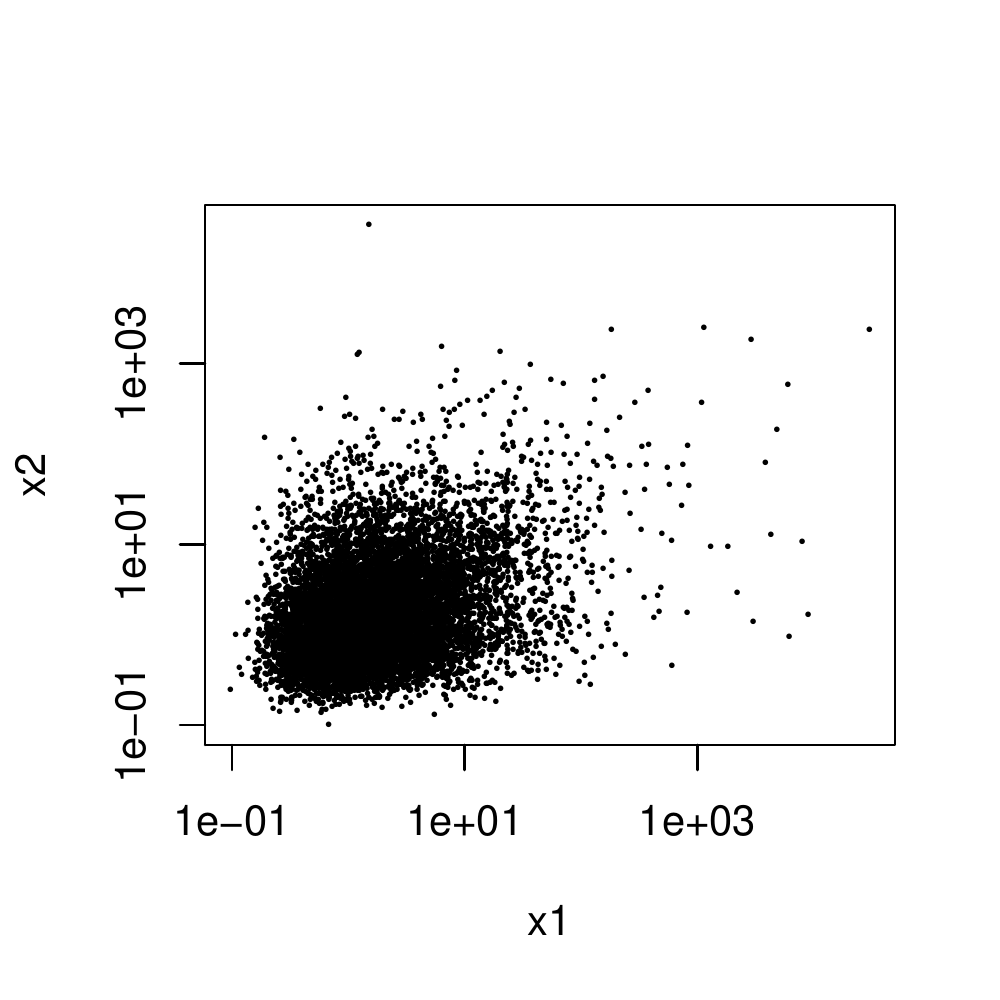}   
		\label{fig2:subfig2} }
	\end{subfigure}
	\begin{subfigure}[]{
		\includegraphics[width=.31\textwidth] {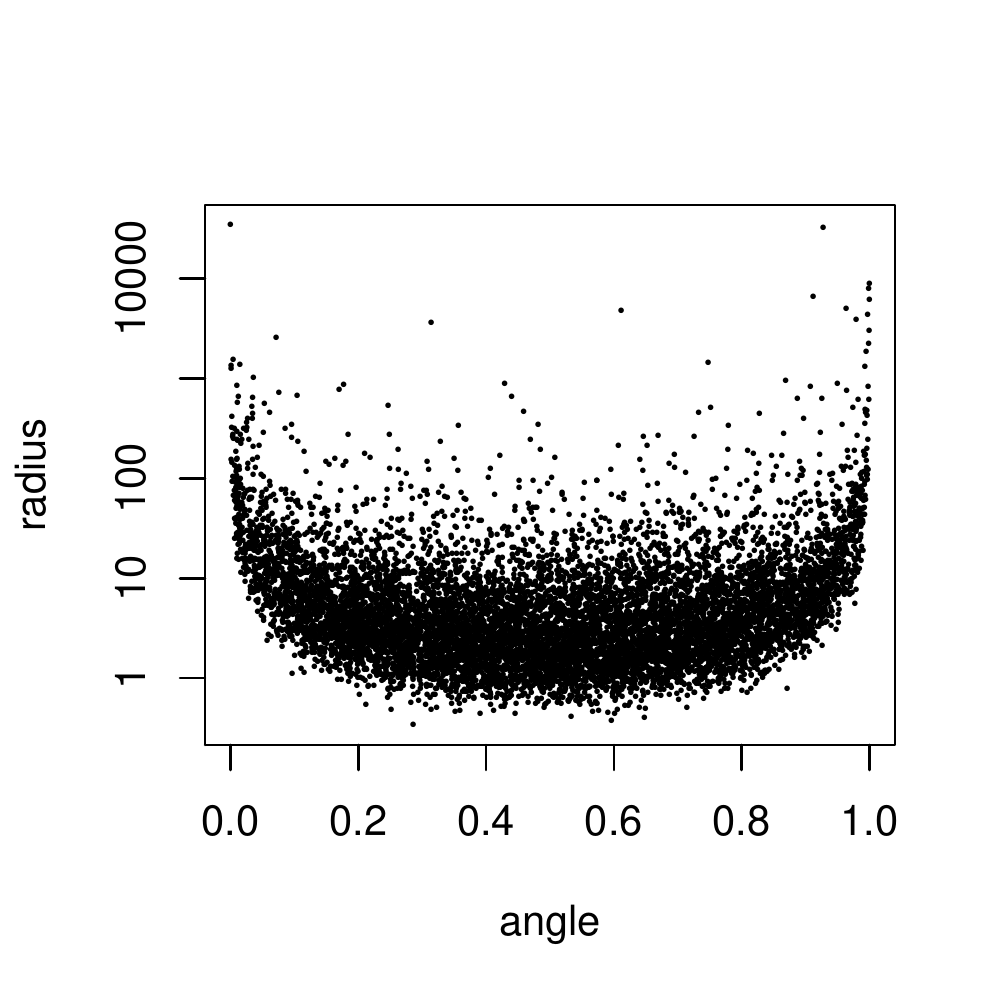}   
		\label{fig2:subfig3} }
	\end{subfigure}
	\begin{subfigure}[]{
		\includegraphics[width=.62\textwidth] {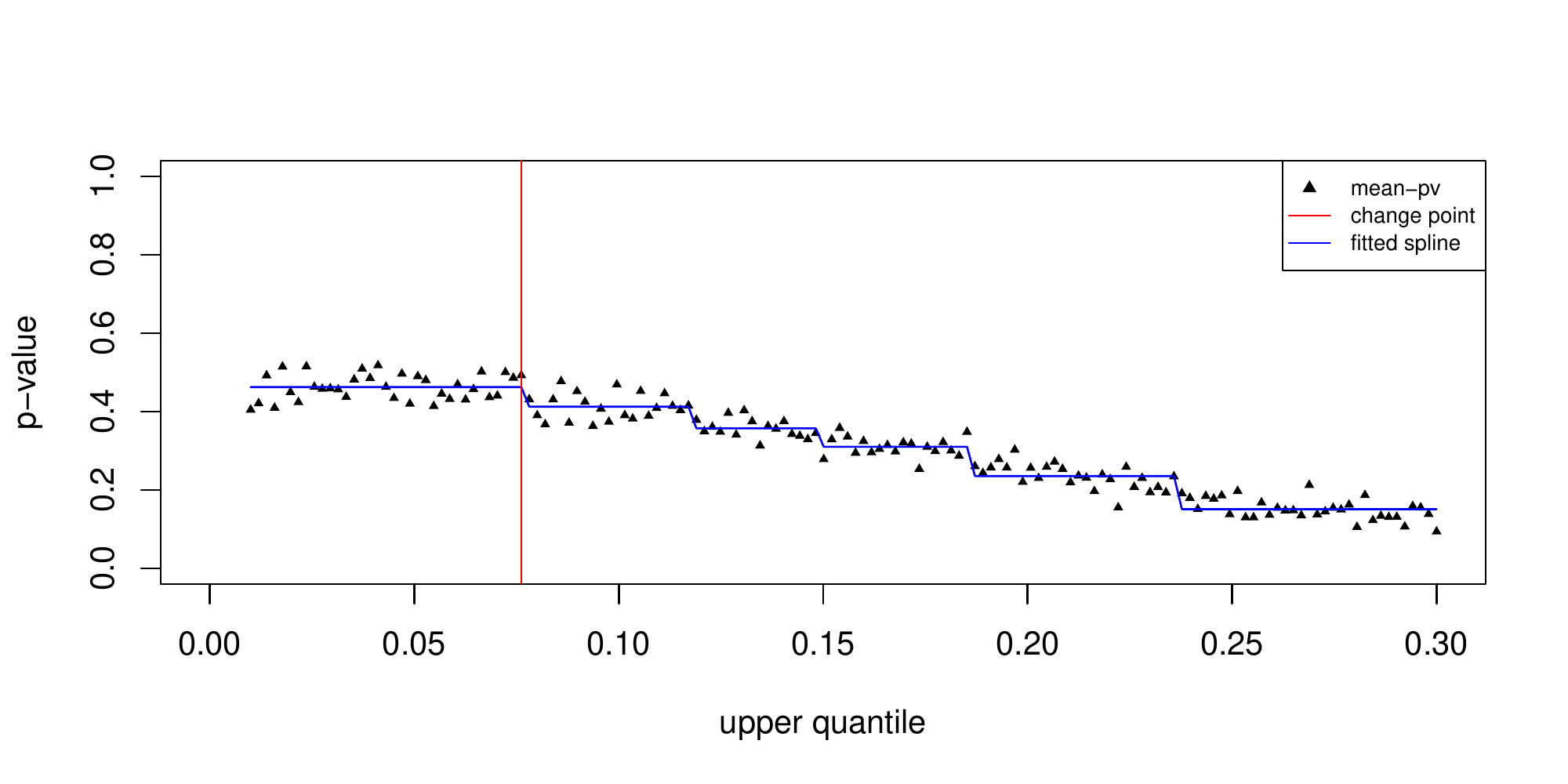}   
		\label{fig2:subfig4} }
	\end{subfigure}
	\begin{subfigure}[]{
		\includegraphics[width=.31\textwidth] {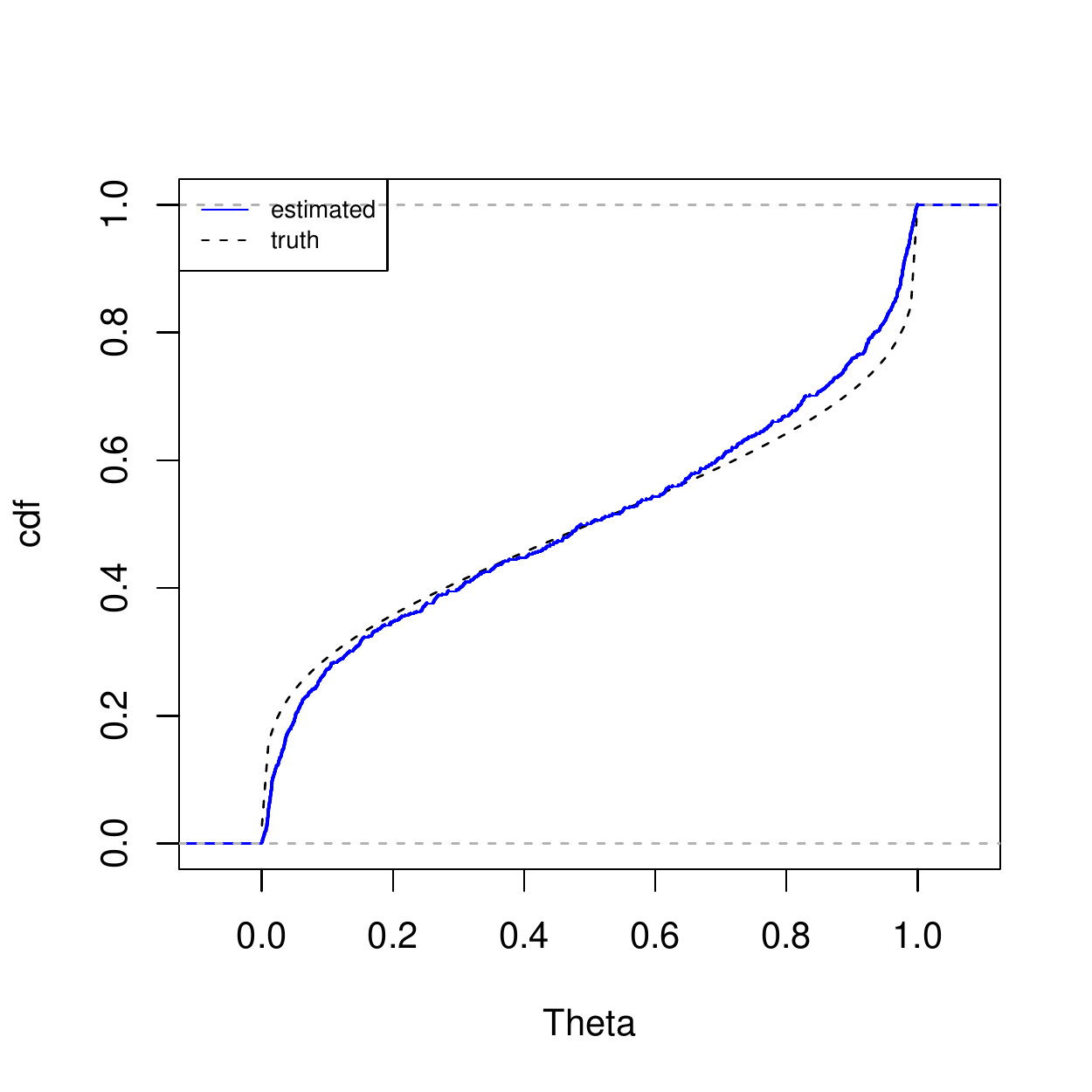}   
		\label{fig2:subfig5} }
	\end{subfigure}
	\label{fig:e2}
	\caption{Example \ref{ex:2}.  (a) scatterplot of $(X_{i1},X_{i2})$; (b) scatterplot of $(X_{i1},X_{i2})$ in log-log scale; (c) scatterplot of $(R_i,\Theta_i)$; (d) mean $p$-value path (black triangles), fitted WBS spline (blue line), and the chosen threshold quantile (red vertical line); (e) estimated cdf of $\Theta$ using the threshold chosen, compared with the theoretical limiting cdf (black dotted).}
\end{figure}


\subsection{Real data} \label{ex:3}

In this example, we look at the following exchange rate returns relative to the US dollar: Deutsche mark (DEM), British pound (GBP), Canadian dollar (CAD), and Swiss franc (CHF). The time spans for the data are 1990-01-01 to 1998-12-31 with a total of 3287 days of observations. We examine the pairs GBP/CHF, CAD/CHF, DEM/CHF and estimate the angular density in the tail for each pair. 
Figures~\ref{fig3:subfig1}--\ref{fig3:subfig3} present the scatter plots of the data. The marginals of the observations are standardized using the rank transformation proposed in \cite{joe:smith:weissman:1992}:
$$
	Z_{i}=1/\log\{n/(Rank(X_{i})-.5)\},\quad i=1,\ldots,n.
$$
Again {$\{q_k\}$ is chosen to be the 150 equidistant points between 0.01 and 0.3}, and the mean $p$-value $\overline{pv}_{k}$ is calculated using $m=60$ random subsamples of size $n_{k}=500\cdot q_k$ from the observations with $R_i>r_k$.  Note that while it may not be reasonable to view the observations as iid, the subsampling scheme can still be applied to choose the threshold of independence between $R$ and $\Theta$. 

The mean $p$-value paths are shown in Figures~\ref{fig3:subfig7}--\ref{fig3:subfig9}. The threshold levels selected for the three pairs are $9.6\%$, $7.4\%$, $16\%$, respectively. Figures~\ref{fig3:subfig4}--\ref{fig3:subfig6} show the shape of the estimated angular densities for each pairs. As expected, the tails of the two central European exchange rates, DEM and CHF, are highly dependent. In contrast, that of {CAD and CHF} are almost independent.

\begin{figure}[t]
	\begin{subfigure}[]{
		\includegraphics[width=.31\textwidth] {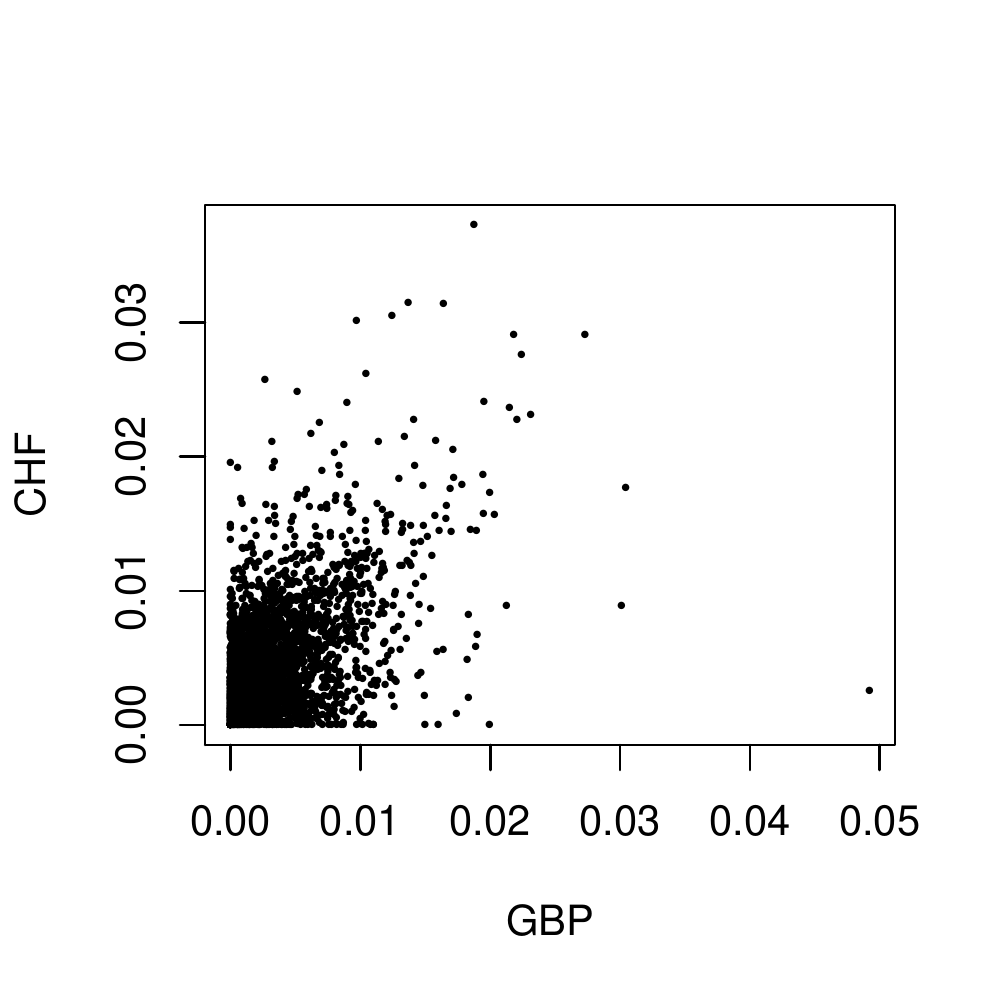}   
		\label{fig3:subfig1} }
	\end{subfigure}
	\begin{subfigure}[]{
		\includegraphics[width=.31\textwidth] {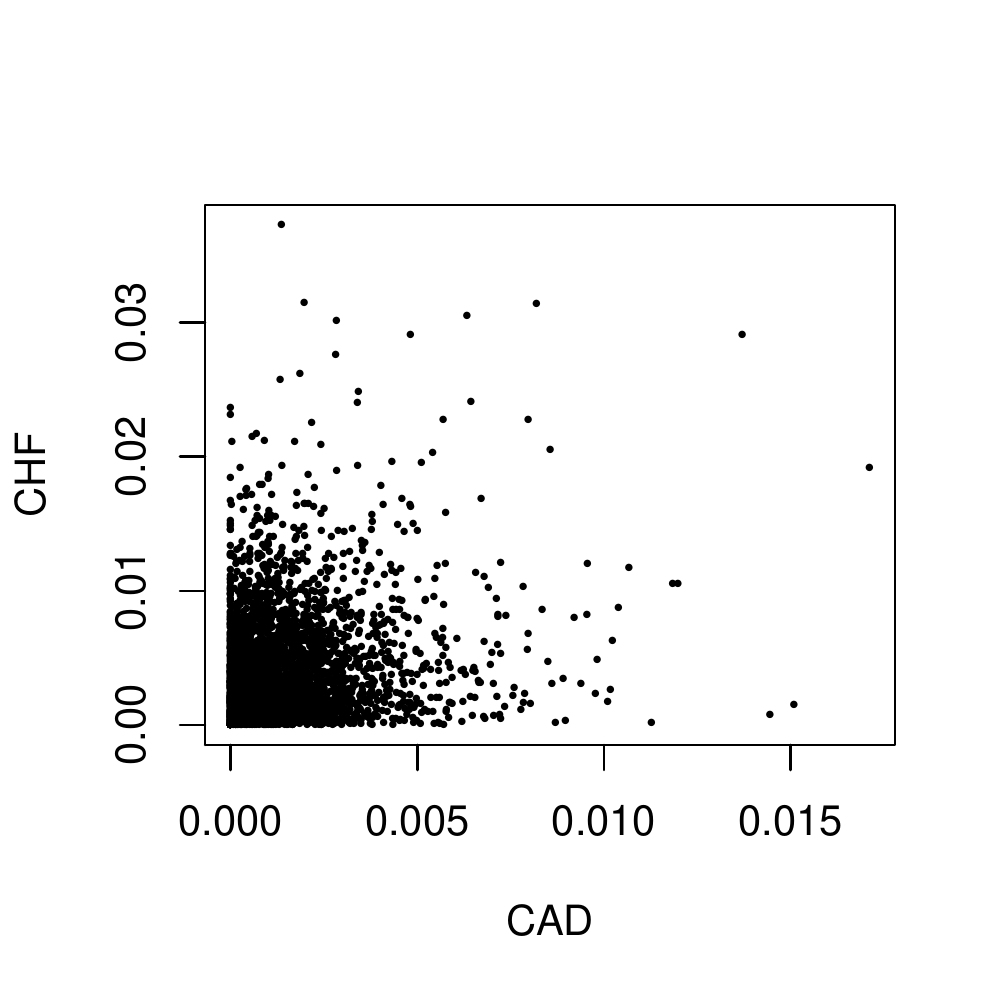}   
		\label{fig3:subfig2} }
	\end{subfigure}
	\begin{subfigure}[]{
		\includegraphics[width=.31\textwidth] {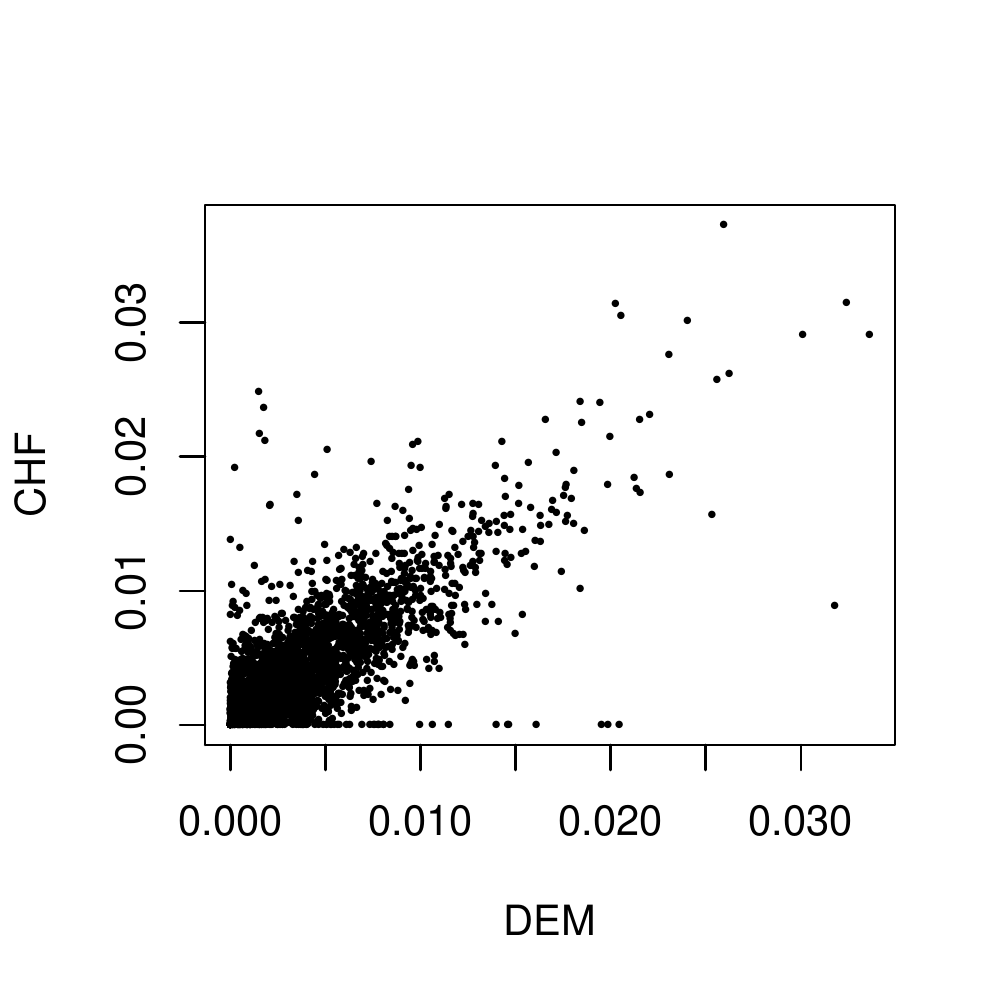}   
		\label{fig3:subfig3} }
	\end{subfigure}
	\begin{subfigure}[]{
		\includegraphics[width=.31\textwidth] {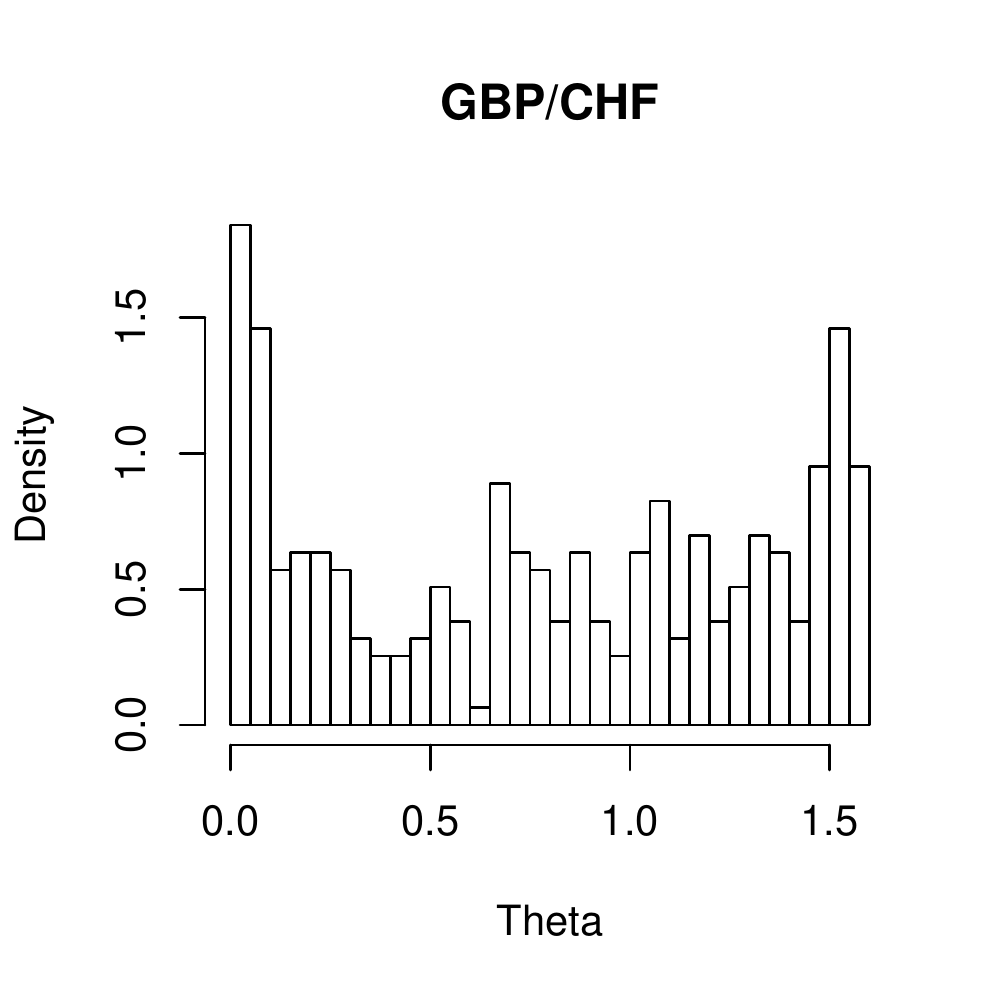}   
		\label{fig3:subfig4} }
	\end{subfigure}
	\begin{subfigure}[]{
		\includegraphics[width=.31\textwidth] {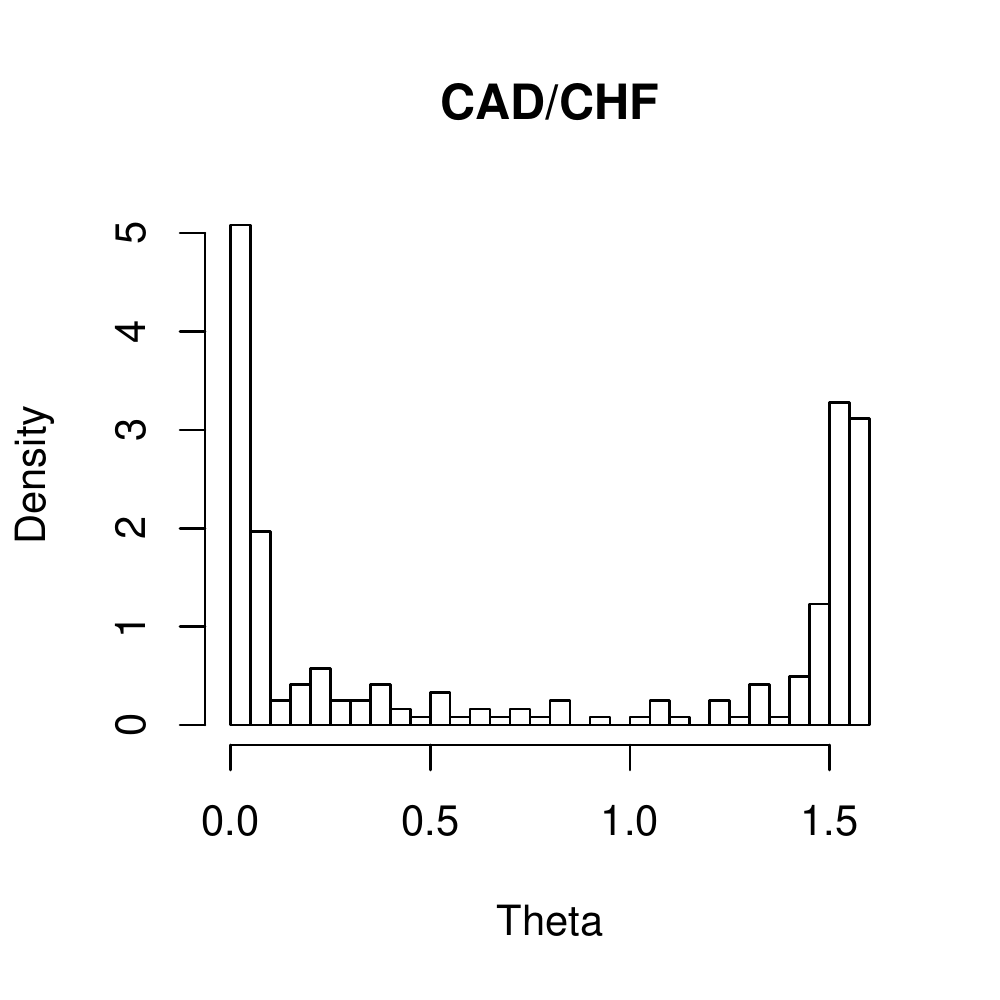}   
		\label{fig3:subfig5} }
	\end{subfigure}
	\begin{subfigure}[]{
		\includegraphics[width=.31\textwidth] {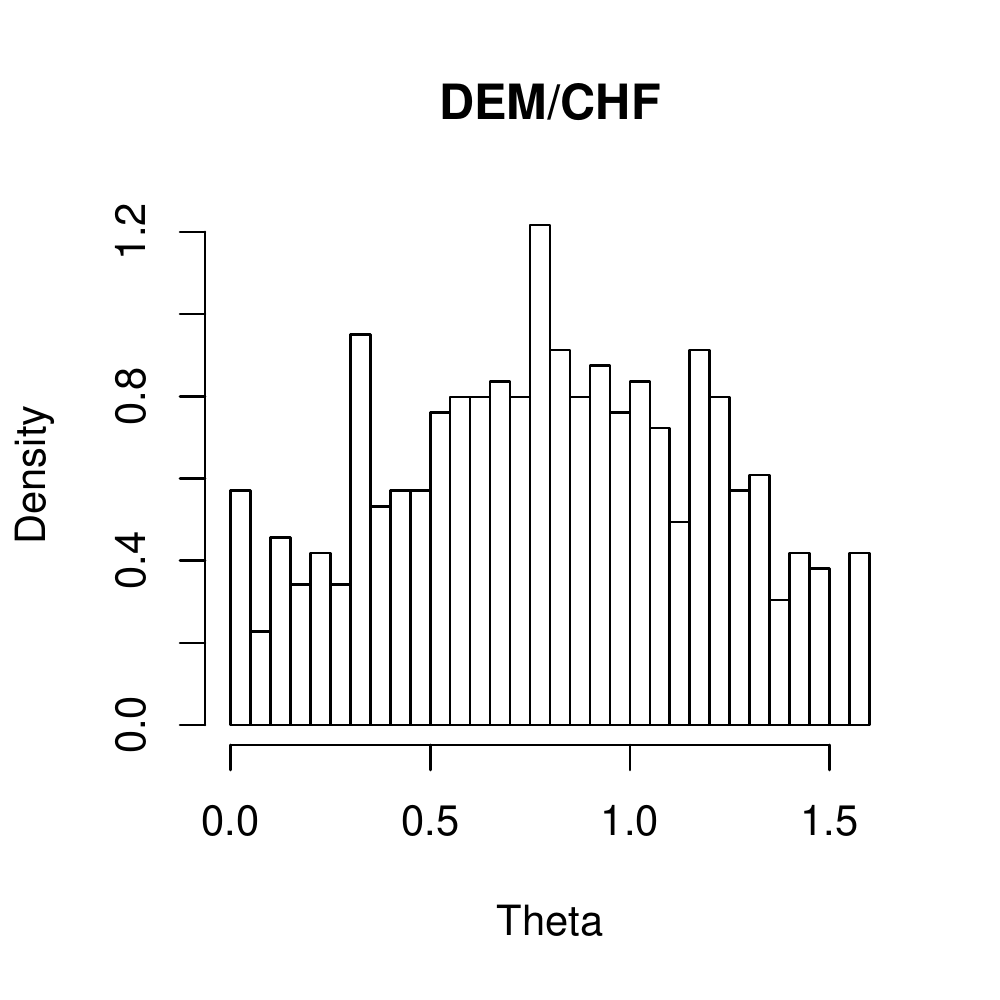}   
		\label{fig3:subfig6} }
	\end{subfigure}
	\label{fig:e3}
	\caption{Example \ref{ex:3}. Analysis of the paired exchange rate returns: CHF/DEM, CHF/GBP, CHF/CAD with respect to USD between 1990-01-01 to 1998-12-31. (a)--(c): Scatter plots of the standardized paired exchange rate returns; (d)--(f): Estimated angular densities using the estimated thresholds chosen.}
\end{figure}

\begin{figure}
	\begin{subfigure}[]{
		\includegraphics[width=.8\textwidth] {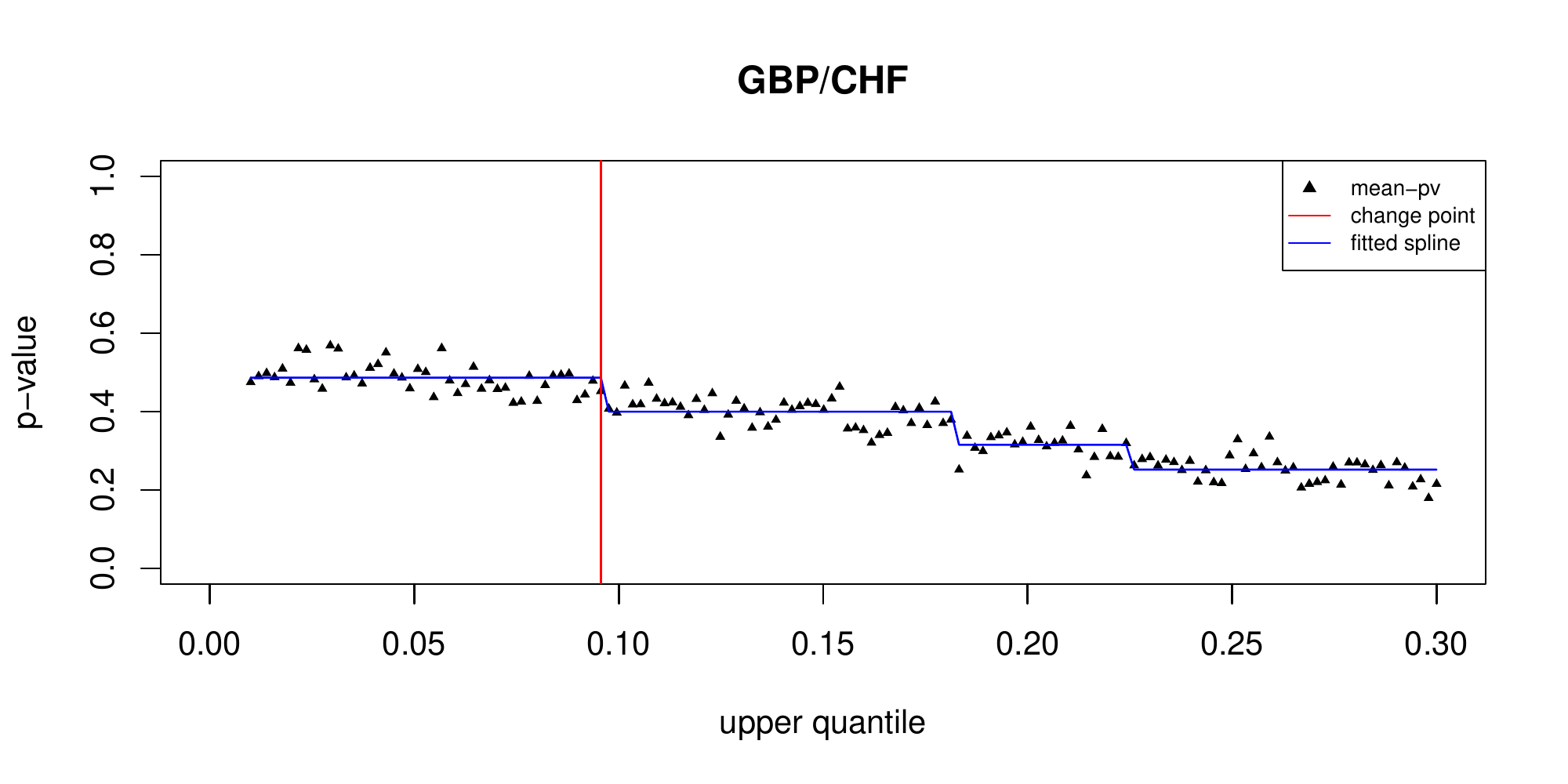}   
		\label{fig3:subfig7} }
	\end{subfigure}
	\begin{subfigure}[]{
		\includegraphics[width=.8\textwidth] {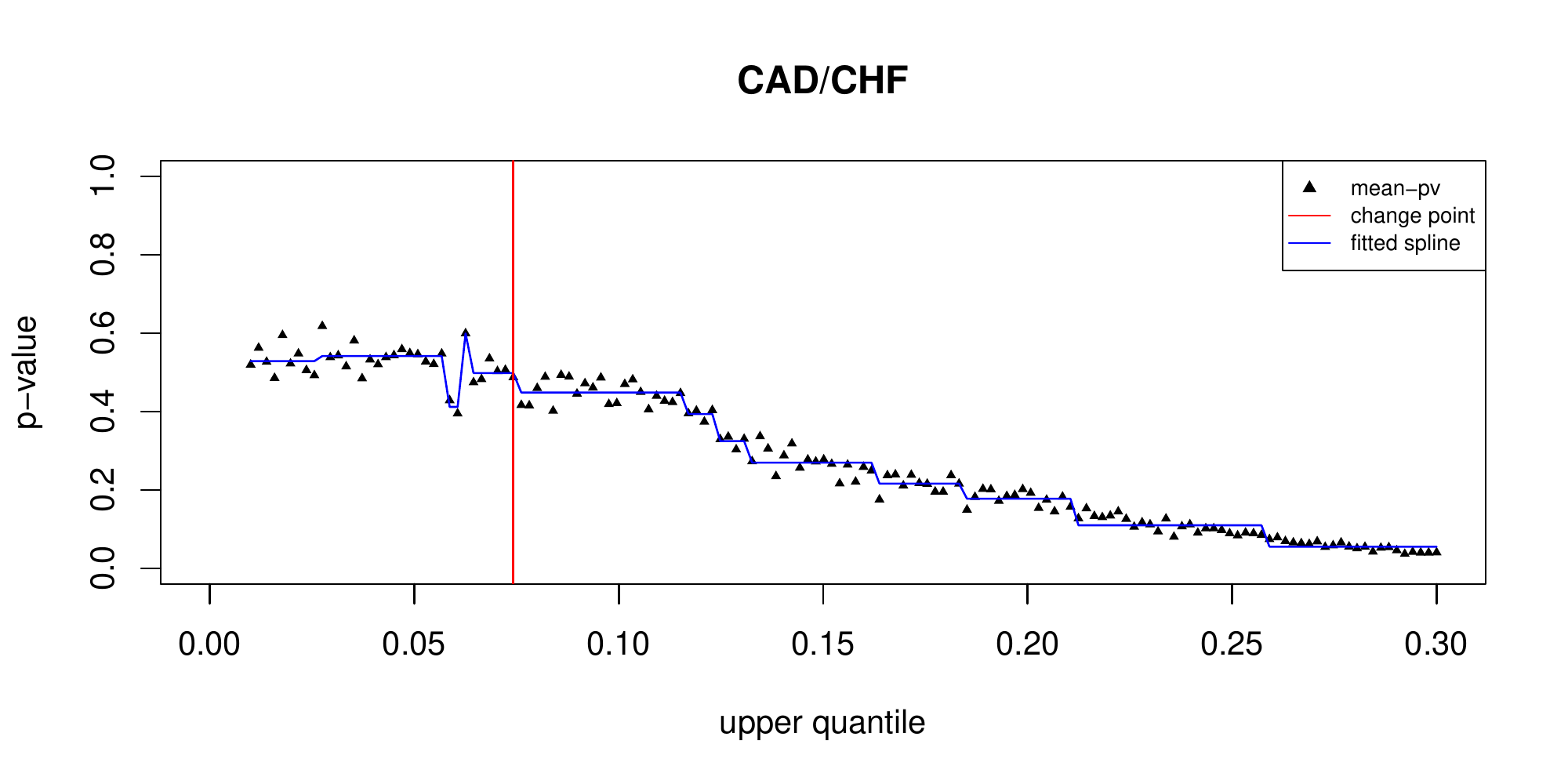}   
		\label{fig3:subfig8} }
	\end{subfigure}
	\begin{subfigure}[]{
		\includegraphics[width=.8\textwidth] {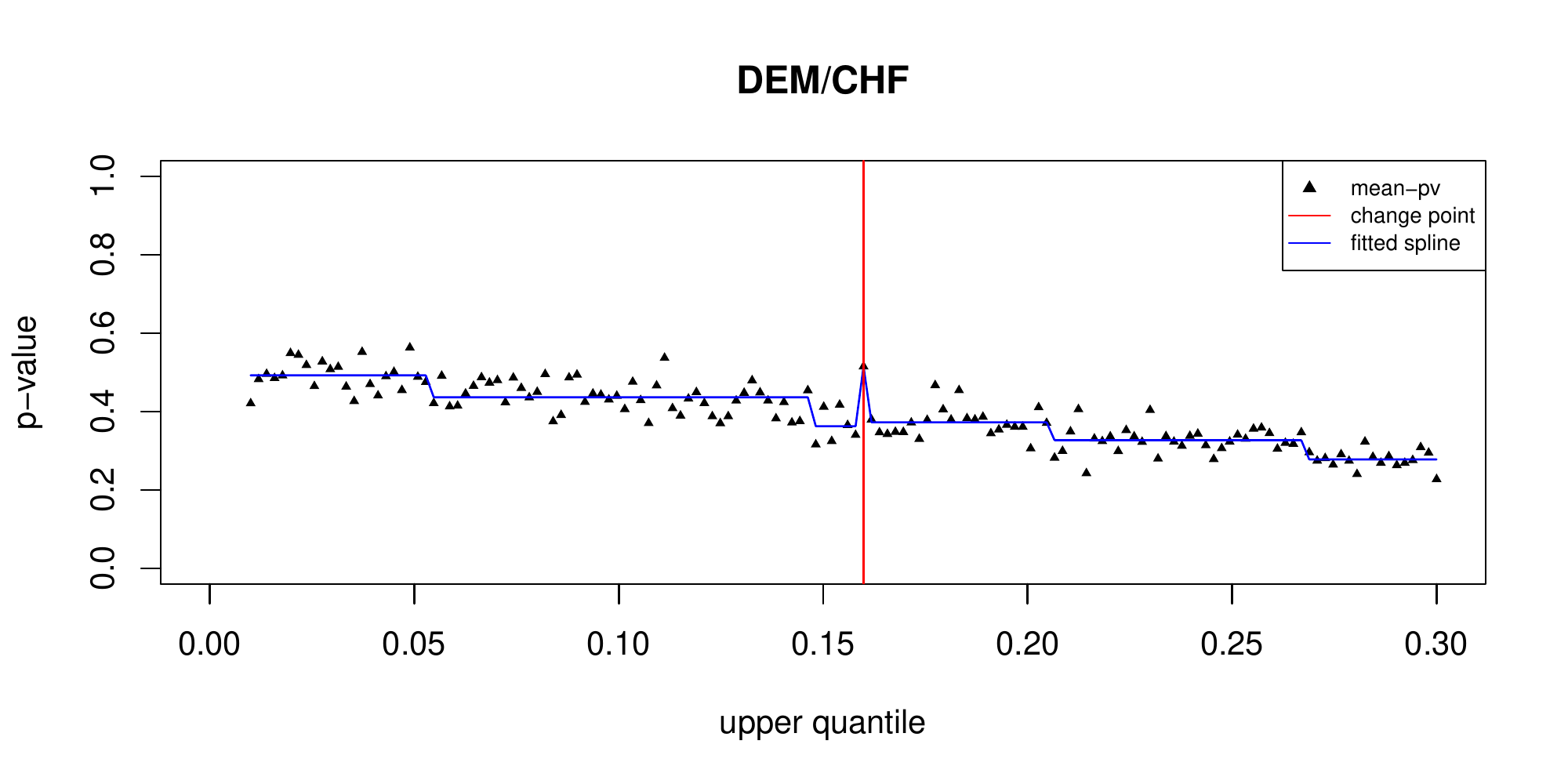}   
		\label{fig3:subfig9} }
	\end{subfigure}
	\label{fig:e32}
	\caption{Example \ref{ex:3} (cont.). Analysis of the paired exchange rate returns: CHF/DEM, CHF/GBP, CHF/CAD with respect to USD between 1990-01-01 to 1998-12-31. (a)--(c): mean $p$-value paths (black triangles), fitted WBS splines (blue lines) and the chosen threshold quantiles (red vertical line).}
\end{figure}


\subsection{Simulated non-regularly varying data} \label{ex:4}

In this example, we generate data from a model which is not regularly varying.  Let $R$ be a random variable from the standard Pareto distribution:
$$
	\P(R>r) = r^{-1},\quad r\ge1.
$$
Let $\Theta_1,\Theta_2$ be independent random variables such that $\Theta_1\sim U(0,0.5)$, $\Theta_2\sim U(0.5,1)$. Set
$$
\Theta|R \sim
	\begin{cases}
		\Theta_1, & \quad \text{if }\log R \in(2k,2k+1] \text{ for some integer $k$}, \\
		\Theta_2, &  \quad \text{if } \log R \in(2k+1,2k+2] \text{ for some integer $k$}. \\
	\end{cases}
$$
For any positive integer $k$, it can be verify that
$$
	\P(\Theta \in (0,0.5)|R>e^{2k}) = \frac{1-e^{-1}}{1-e^{-2}},
$$
while
$$
	\P(\Theta \in (0,0.5)|R>e^{2k+1}) = \frac{e^{-1}-e^{-2}}{1-e^{-2}}.
$$
Hence $\P(\Theta \in \cdot |R>r)$ does not convergence as $r\to\infty$ and $\bfX=(R\Theta,R(1-\Theta))$ is not regularly varying. 

Let $(X_{i1},X_{i2}) = (R_i\Theta_i,R_i(1-\Theta_i))$, $i=1,\ldots,n$, be iid observations from this distribution, where $n=20000$. Figures~\ref{fig5:subfig1}, \ref{fig5:subfig2} and \ref{fig5:subfig3} show the data in Cartesian and polar coordinates. We apply our threshold selection algorithm to the data, with the threshold upper quantile levels $q_k$ chosen as the 150 equidistant points between 0.01 and 0.2. The mean $p$-value $\overline{pv}_{k}$ is calculated using $m=60$ random subsamples of size $n_{k}=500\cdot q_k$ from the observations with $R_i>r_k$. This is shown in Figure~\ref{fig5:subfig4}.

In this model, the radial part $R$ is regularly varying, but $\Theta$ and $R$ are dependent given $R>r$ for any $r$.  We expect the mean $p$-values to be well below 0.5, as are observed. No threshold is selected by the algorithm.  This suggests that our technique can potentially be used to detect misspecified models from the regular variation assumption, especially in the scenario where the heavy-tailedness of $R$ is observed but dependence between $R$ and $\Theta$ is suspected.

\begin{figure}[t]
	\begin{subfigure}[]{
		\includegraphics[width=.31\textwidth] {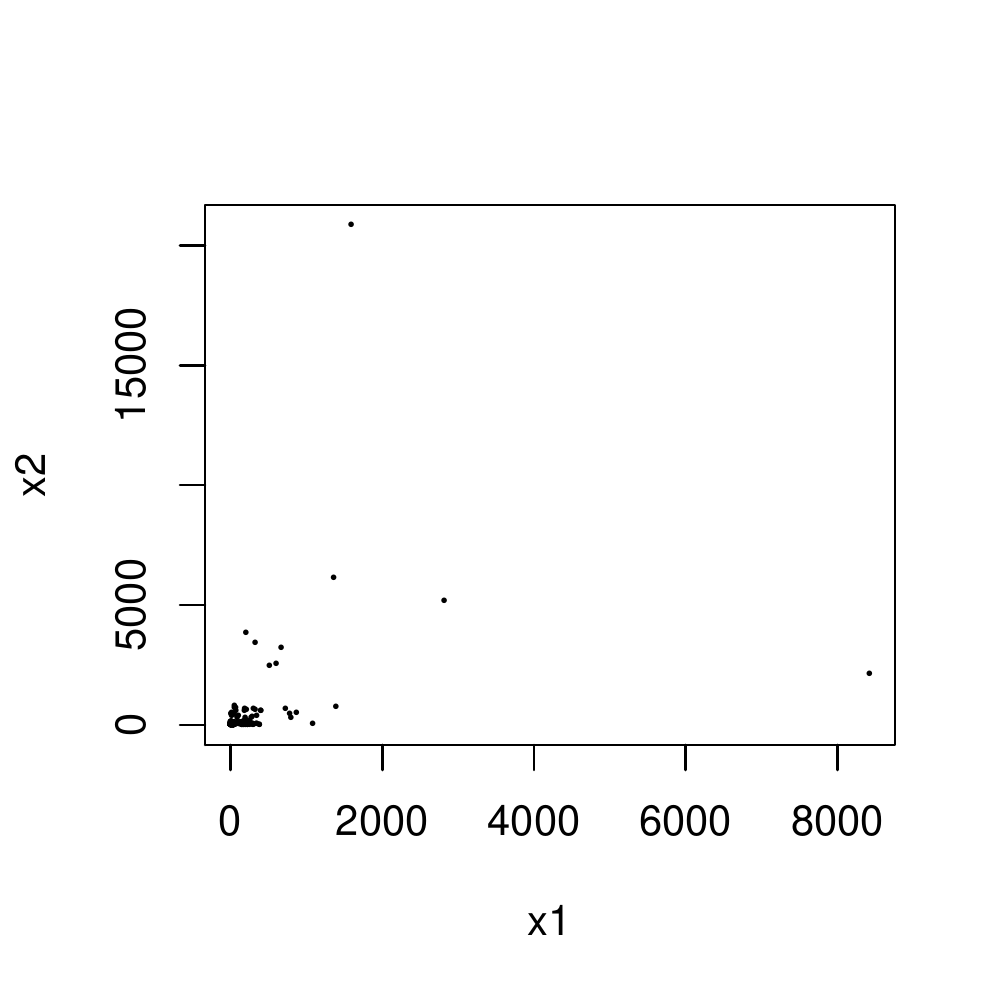}   
		\label{fig5:subfig1} }
	\end{subfigure}
	\begin{subfigure}[]{
		\includegraphics[width=.31\textwidth] {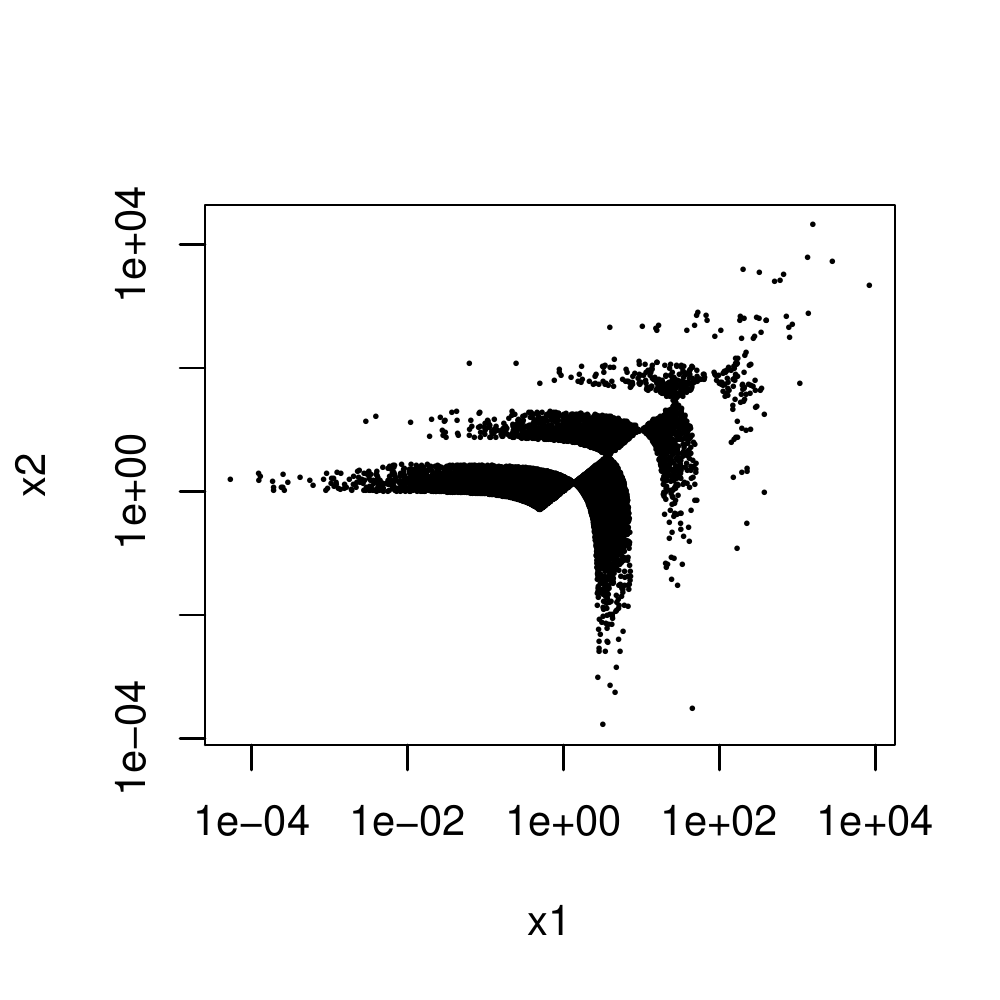}   
		\label{fig5:subfig2} }
	\end{subfigure}
	\begin{subfigure}[]{
		\includegraphics[width=.31\textwidth] {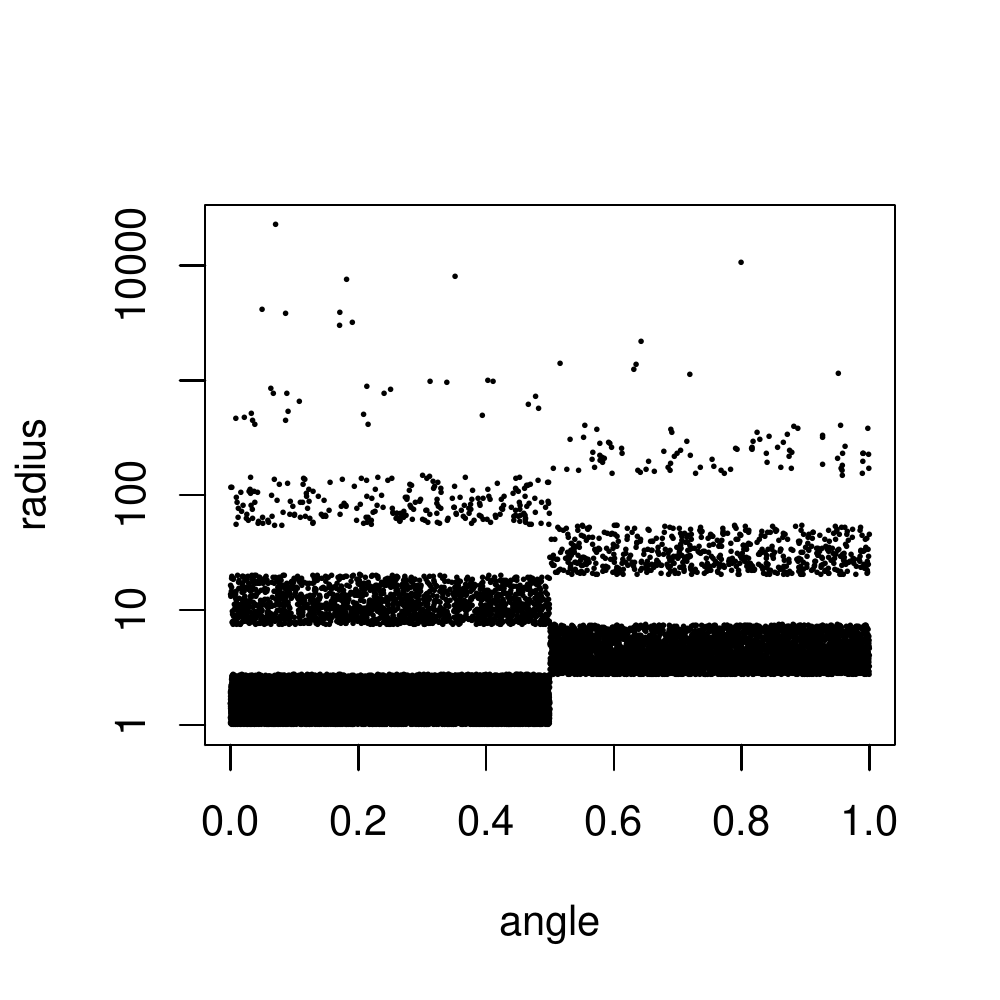}   
		\label{fig5:subfig3} }
	\end{subfigure}
	\begin{subfigure}[]{
		\includegraphics[width=.62\textwidth] {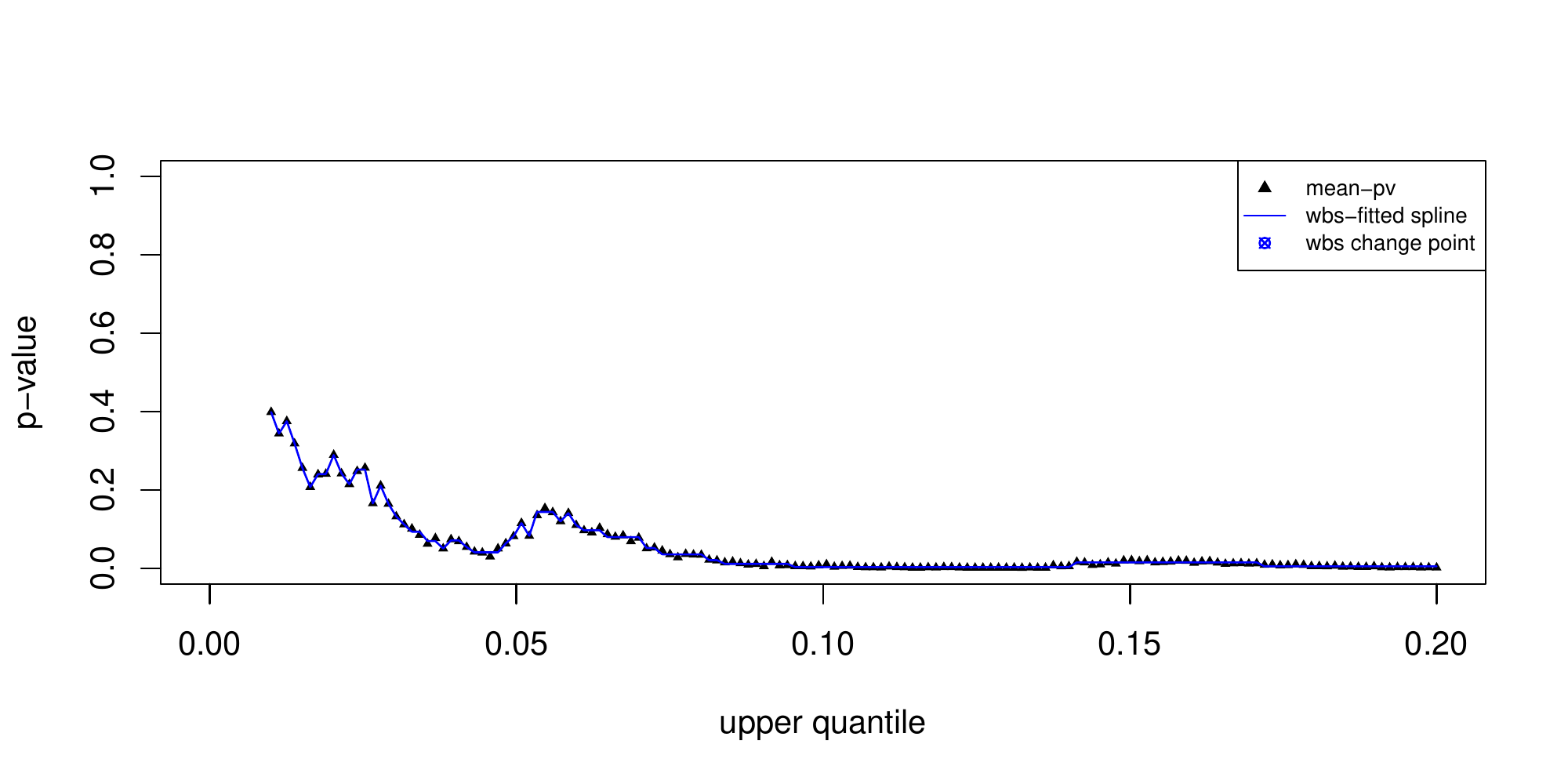}   
		\label{fig5:subfig4} }
	\end{subfigure}
	\label{fig:e5}
	\caption{Example \ref{ex:1}. (a) scatterplot of $(X_{i1},X_{i2})$; (b) scatterplot of $(X_{i1},X_{i2})$ in log-log scale; (c) scatterplot of $(R_i,\Theta_i)$; (d) mean $p$-value path (black triangles), fitted WBS spline (blue line), and the chosen threshold quantile (red vertical line).}
\end{figure}


\section{Discussion}

In this paper, we propose a threshold selection procedure for multivariate regular variation, for which $R$ and $\Theta$ are approximately independent for $R$ beyond the threshold. While our problem is set in the multivariate heavy-tailed setting and we utilize distance covariance as our measure of dependence, our algorithm is essentially a change point detection method based on $p$-values generated through subsampling schemes. Hence this may be generalized to other problem settings and potentially incorporates other dependence measures. Though we have proposed an automatic selection for the threshold based on the fitted mean $p$-value path, we would like to emphasize that, like the Hill plot, this should be viewed as a visual tool rather than an optimal selection criterion. 
The final threshold should be based on the automatic procedure in conjunction with visual inspection of the $p$-value path.

We note that the choice of norm in the polar coordinate transformation \eqref{eq:polar} may result in significant differences in the choice of thresholds, which indicates the rate of convergence to the limit spectral density. This is especially evident in the near `asymptotic independence' case, where the mass of the angular distribution concentrates on the axes. 

As an illustration, we simulated iid observations $\{(X_{i1},X_{i2})\}_{i=1,\ldots,n}$ from the bivariate logistic distribution, where the cdf is given in \eqref{eq:logistic}, with $\gamma=0.95$ and $n=10000$. We apply the polar coordinate transformation with respect to the $L_p$-norm for $p=0.2,1,5$. {Note that in the case of $p=0.2$, $L_p$ is only a quasi-norm as it does not satisfy the triangular inequality.  However, it can be shown that \eqref{eq:ang:dist} holds and the limiting angular distribution exists for bivariate logistic distribution.} We compare the threshold selection results in Figure~\ref{fig:e41}. Note that in the cases of the $L_1$ and $L_5$-norms, the threshold levels are chosen to be upper $5\%$ and $12\%$, respectively, while in the case of the $L_{0.2}$-norm, it is not possible to select the threshold as the dependence between $R$ and $\Theta$ at all levels were shown to be significant. Indeed, this can be seen in Figure~\ref{fig:e42}, where we compare the histogram of $X_1^p/(X_1^p+X_2^p)$ given $\|X\|_p$ is large across three levels of truncations, $2\%$, $5\%$ and $12\%$, together with the theoretical limiting density curve. For the $L_{0.2}$-norm, the limiting angular density is poorly approximated by the truncated data for all levels. For the other two norms, the truncated observations according to the selected threshold provide decent approximations to the true limiting density of the angular component. {One possible explanation for this is that under the $L_{0.2}$-norm, the threshold is concave and hence observations on the diagonal are much easier to be classified as ``extremes'' than those near the axis.  As a result, the estimator of the angular density uses more observations near the diagonal, which may not be, in fact, close enough to the limit.} 
{This choice of norm} is an interesting topic and is the subject of ongoing research.

\begin{figure}[t]
	\includegraphics[width=.6\textwidth] {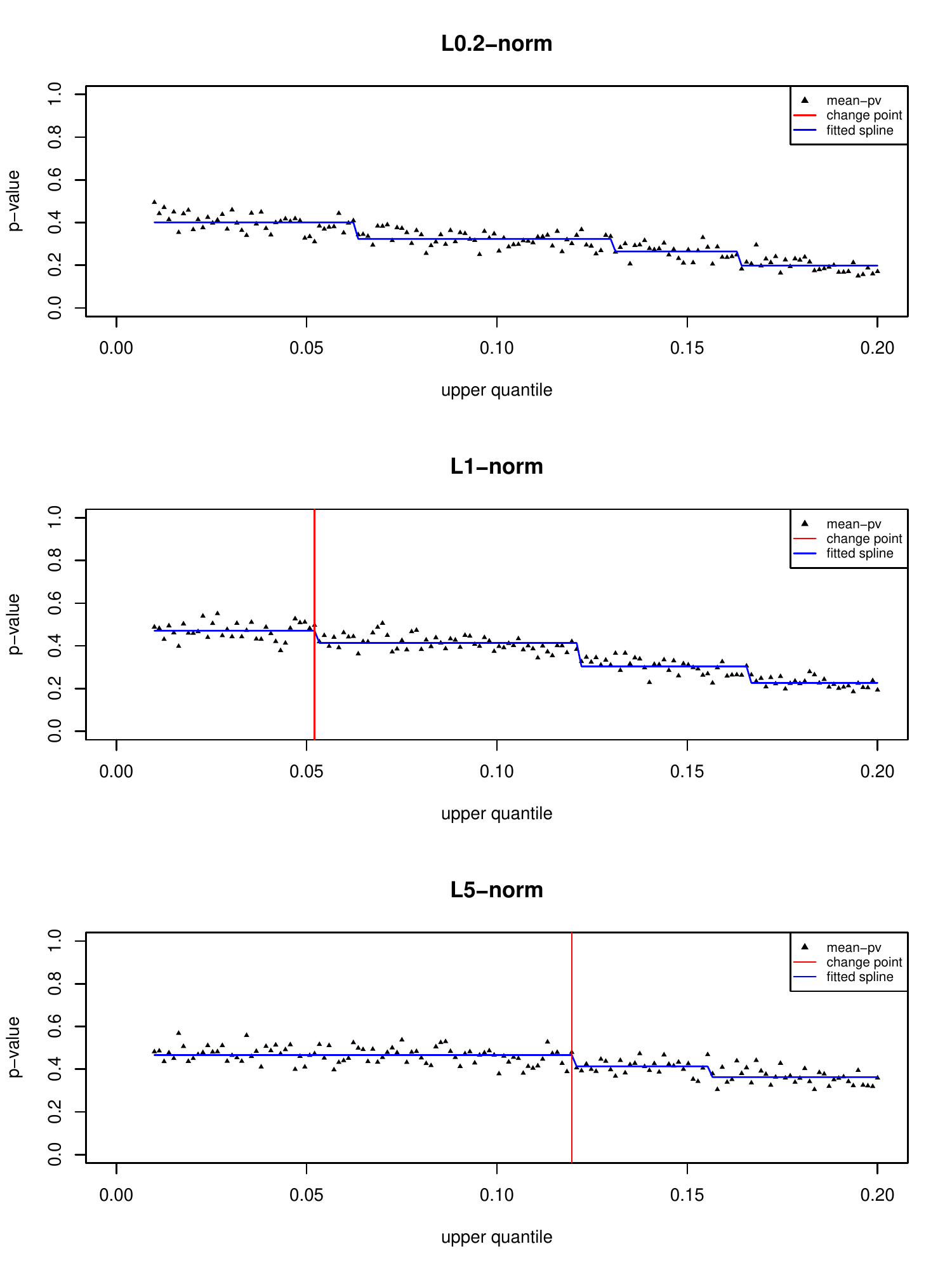}   
	\caption{Simulated logistic data of sample size $n=10000$ with $\gamma=0.95$. Threshold selection algorithm applied under the $L_{0.2}$-, $L_1$- and $L_5$-norms: mean $p$-value paths (black triangles), fitted WBS splines (blue lines) and the chosen threshold quantiles (red vertical line).}
	\label{fig:e41}
\end{figure}

\begin{figure}[t]
	\includegraphics[width=.7\textwidth] {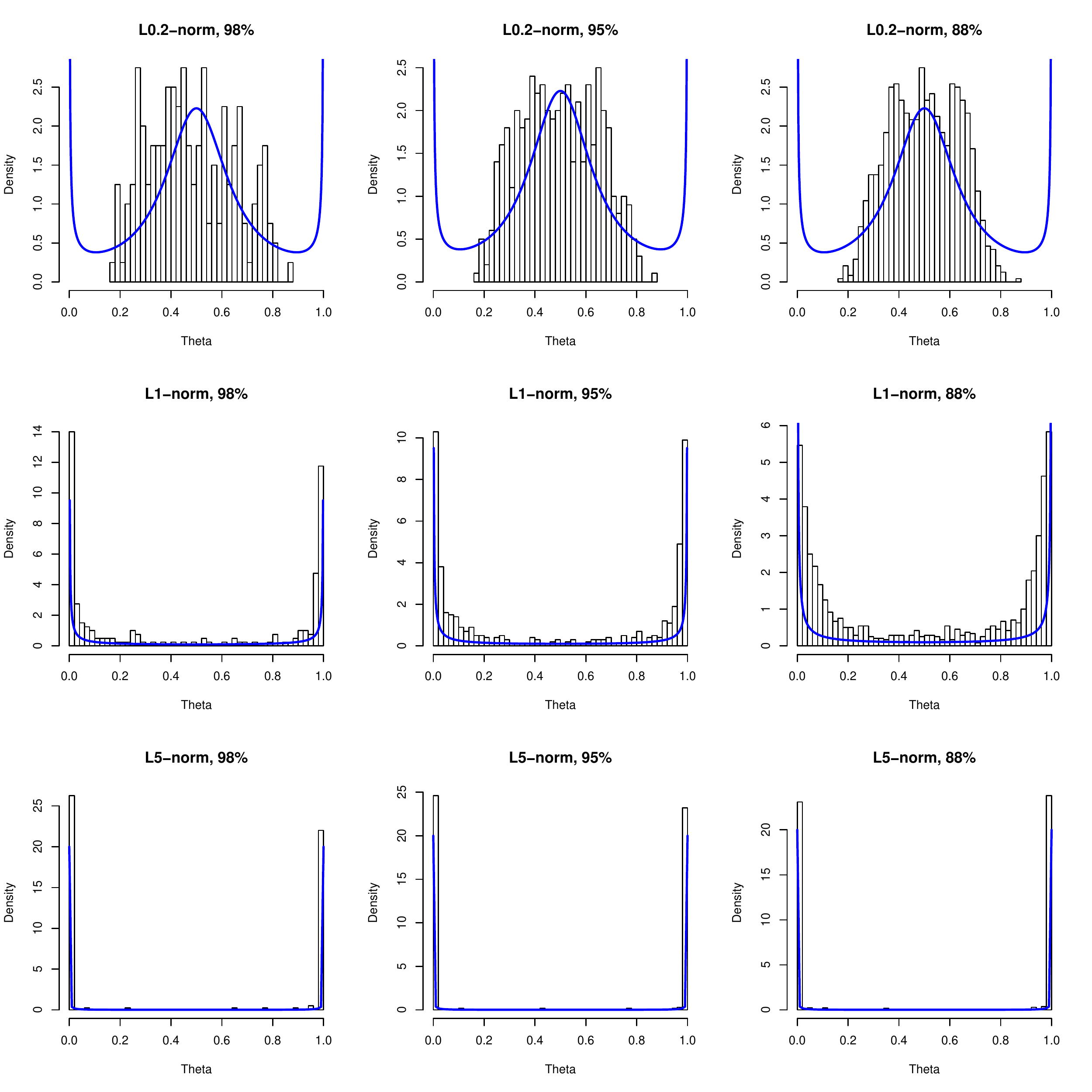}   
	\caption{Simulated logistic data of sample size $n=10000$ with $\gamma=0.95$. Histogram of $X_1^p/(X_1^p+X_2^p)$ for truncated levels $2\%$, $5\%$ and $12\%$ for $p=0.2,1,5$.}
	\label{fig:e42}
\end{figure}


\section*{Acknowledgement} \label{sec:acknow}

The foreign exchange rate data are obtained from OANDA from through R-package `qrmtools'.  We would like to thank Bodhisattva Sen for helpful discussions.  {We would also like to thank the editor and referees for their many constructive and insightful comments.}


\bibliographystyle{plainnat}

\bibliography{wan}

\appendix

\section{Proof of Theorem~\ref{thm:1}} \label{app:1}

{Note from the definition of the empirical distance covariance in \eqref{eq:teststat}, the integrand can be expressed as} 
\beao
	C_n(s,t) 
		&=& \frac{1}{n\hat p_n}\sum_{j=1}^n e^{isR_j/r_n + it^T\mathbf\Theta_j}\mathbf{1}_{\{R_j>r_n\}} \\
		&& \quad - \frac{1}{n\hat p_n}\sum_{j=1}^n e^{isR_j/r_n}\mathbf{1}_{\{R_j>r_n\}} \frac{1}{n\hat p_n}\sum_{k=1}^n e^{ it^T\mathbf\Theta_k}\mathbf{1}_{\{R_k>r_n\}} \\
		&=& \frac{1}{n\hat p_n}\sum_{j=1}^n \left(e^{isR_j/r_n} 
			- \varphi_{{\frac{R}{r_n}}|r_n}(s)\right) \left(e^{it^T\mathbf\Theta_j} 
			- \varphi_{\Theta|r_n}(t)\right) \mathbf{1}_{\{R_j>r_n\}} \\
		&&-\,  \frac{1}{n\hat p_n}\sum_{j=1}^n \left(e^{isR_j/r_n} 
			- \varphi_{{\frac{R}{r_n}}|r_n}(s)\right)\mathbf{1}_{\{R_j>r_n\}} \  \frac{1}{n\hat p_n}\sum_{k=1}^n \left(e^{it^T\mathbf\Theta_k} 
			- \varphi_{\Theta|r_n}(t)\right) \mathbf{1}_{\{R_k>r_n\}}.
\eeao
Writing $ U_{jn} = \left(e^{isR_j/r_n} - \varphi_{{\frac{R}{r_n}}|r_n}(s)\right)\mathbf{1}_{\{R_j>r_n\}}$, $V_{jn} =  \left(e^{it^T\mathbf\Theta_j} - \varphi_{\Theta|r_n}(t)\right) \mathbf{1}_{\{R_j>r_n\}}$, we have
$$
	C_n(s,t) = \frac{p_n}{\hat{p}_n}\,\frac{1}{n} \sum_{j=1}^n \frac{U_{jn}V_{jn}}{p_n} -\left(\frac{p_n}{\hat{p}_n}\right)^2\,\frac{1}{n}\sum_{j=1}^n \frac{U_{jn}}{p_n} \,\frac{1}{n} \sum_{k=1}^n \frac{V_{kn}}{p_n}.
$$
Since $\E U_{jn} = \E V_{jn} = 0$ and $\E U_{jn}V_{jn}/p_n = \varphi_{{\frac{R}{r_n}},\Theta|r_n}(s,t)- \varphi_{{\frac{R}{r_n}}|r_n}(s)\varphi_{\Theta|r_n}(t)$, it is convenient to mean correct the summands and obtain
\beao
	C_n(s,t) 
		&=&  \frac{p_n}{\hat{p}_n}\,\frac{1}{n} \sum_{j=1}^n \left(\frac{U_{jn}V_{jn}}{p_n} 
			- \left(\varphi_{{\frac{R}{r_n}},\Theta|r_n}(s,t)
			- \varphi_{{\frac{R}{r_n}}|r_n}(s)\varphi_{\Theta|r_n}(t)\right)\right)
			-\left(\frac{p_n}{\hat{p}_n}\right)^2\,\frac{1}{n}\sum_{j=1}^n \frac{U_{jn}}{p_n} \,\frac{1}{n} \sum_{k=1}^n \frac{V_{kn}}{p_n} \nonumber \\
		&& +  \frac{p_n}{\hat{p}_n}\,(\varphi_{{\frac{R}{r_n}},\Theta|r_n}(s,t)
			- \varphi_{{\frac{R}{r_n}}|r_n}(s)\varphi_{\Theta|r_n}(t))  \nonumber \\
		&=:&  \left(\frac{p_n}{\hat{p}_n} \right) \tilde{E}_1 
			- \left(\frac{p_n}{\hat{p}_n} \right)^2 \tilde{E}_{21} \tilde{E}_{22} 
			+ \left(\frac{p_n}{\hat{p}_n} \right) \tilde{E}_3 \nonumber\\
		&=:&  \left(\frac{p_n}{\hat{p}_n} \right) \tilde{E}_1 
			- \left(\frac{p_n}{\hat{p}_n} \right)^2 \tilde{E}_{2} 
			+ \left(\frac{p_n}{\hat{p}_n} \right) \tilde{E}_3 \label{eq:4}
\eeao
Note that $ \tilde{E}_1, \tilde{E}_{21}, \tilde{E}_{22}$ are averages of iid zero-mean random variables and $ \tilde{E}_3$ is non-random. We first prove the second part of Theorem \ref{thm:1}. The first part of Theorem \ref{thm:1} follows easily in a similar fashion.


\begin{proof}[Proof of Theorem \ref{thm:1}(2)]

In order to show \eqref{eq:thm2}, it suffices to establish that
\beqq \label{eq:toshow1}
	n\hat p_n  \int_{\bbr^{d+1}} \left(\frac{p_n}{\hat{p}_n} \right)^2 |\tilde{E}_1|^2 \mu(ds,dt) \cid \int_{\bbr^{d+1}} |Q(s,t)|^2\mu(ds,dt),
\eeqq
and
\beqq\label{eq:toshow2}
	\left|n\hat p_nT_n - n\hat p_n  \int_{\bbr^{d+1}}  \left(\frac{p_n}{\hat{p}_n} \right)^2 |\tilde{E}_1|^2\mu(ds,dt)\right| 
	\cip 0,
\eeqq
where \eqref{eq:toshow2} can be implied by 
\beqq \label{eq:toshow3}
	n\hat p_n  \int_{\bbr^{d+1}}  \left(\frac{p_n}{\hat{p}_n} \right)^2|\tilde E_2|^2\mu(ds,dt) + n\hat p_n  \int_{\bbr^{d+1}}  \left(\frac{p_n}{\hat{p}_n} \right)^2|\tilde E_3|^2\mu(ds,dt) \cip 0.
\eeqq
Notice that
\beao
	\E \left|\frac{\hat p_n}{{p}_n}-1\right|^2 = \E\left|\frac{1}{n}  \sum_{j=1}^n\left(\frac{ \mathbf{1}_{\{R_j>r_n\}}}{p_n} -1\right)\right|^2 = \frac{1}{n} \E \left|\frac{ \mathbf{1}_{\{R_1>r_n\}}}{p_n} -1\right|^2 \le \frac{1}{np_n} O(1) + \frac{1}{n} O(1) \to 0.
\eeao
Hence $\hat p_n/p_n \cip 1$ and for \eqref{eq:toshow1} and \eqref{eq:toshow3}, it is equivalent to prove that
\beqq\label{eq:toshow4}
	n p_n  \int_{\bbr^{d+1}} |\tilde E_1|^2\mu(ds,dt) \cid \int_{\bbr^{d+1}} |Q(s,t)|^2\mu(ds,dt)
\eeqq
and
\beqq \label{eq:toshow5}
	n p_n  \int_{\bbr^{d+1}}  |\tilde E_2|^2\mu(ds,dt) + n p_n  \int_{\bbr^{d+1}}  |\tilde E_3|^2\mu(ds,dt) \cip 0.
\eeqq
We will show the convergence \eqref{eq:toshow4} in Proposition~\ref{prop:E1}.
By \eqref{eq:cond},
$$
	n p_n  \int_{\bbr^{d+1}} |\tilde E_3|^2\mu(ds,dt) \to 0.
$$
So that \eqref{eq:toshow5} holds provided
\beqq \label{eq:toshow6}
	 n p_n  \int_{\bbr^{d+1}} |\tilde E_2|^2\mu(ds,dt)\cip 0,
\eeqq
which follows in a similar fashion as Proposition~\ref{prop:E1}.

\epf

%


\begin{proposition} \label{prop:E1}

Assume {$\mu$ satisfies 
$$
	\int_{\bbr^{d+1}} (1\wedge |s|^{\beta})(1\wedge |t|^{2}) \mu(ds,dt) <\infty,
$$
and} that $np_n\to\infty$ as $n\to\infty$, then
\beqo \label{eq:6}
	n p_n  \int_{\bbr^{d+1}} |\tilde E_1|^2\mu(ds,dt) \cid \int_{\bbr^{d+1}} |Q(s,t)|^2\mu(ds,dt),
\eeqo
where $Q$ is a centered Gaussian process with covariance function {\eqref{eq:Qcov}}.

\end{proposition}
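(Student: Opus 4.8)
The plan is to read $np_n\int|\tilde E_1|^2\mu$ as the squared norm, in the separable complex Hilbert space $L^2(\mu)$, of a normalized partial–sum process, to prove convergence of that process in $L^2(\mu)$, and then to deduce \eqref{eq:6} from the continuous mapping theorem applied to the continuous functional $g\mapsto\|g\|^2_{L^2(\mu)}=\int_{\bbr^{d+1}}|g(s,t)|^2\mu(ds,dt)$. With $U_{jn}=U_{jn}(s)$ and $V_{jn}=V_{jn}(t)$ as above, put
\[
	G_n(s,t):=\sqrt{np_n}\,\tilde E_1(s,t)=\frac{1}{\sqrt{np_n}}\sum_{j=1}^n\xi_{jn}(s,t),\qquad \xi_{jn}(s,t):=U_{jn}V_{jn}-\E[U_{jn}V_{jn}],
\]
so that $np_n|\tilde E_1(s,t)|^2=|G_n(s,t)|^2$; for each $n$ the summands are i.i.d., mean zero, bounded by $8$, and valued in $L^2(\mu)$. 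I would show that $G_n$ converges in distribution in $L^2(\mu)$ to the centered complex Gaussian field $Q$ with covariance \eqref{eq:Qcov}, by verifying (i) convergence of the finite–dimensional distributions and (ii) tightness in $L^2(\mu)$.

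For (i), fix $(s_1,t_1),\dots,(s_k,t_k)$ and apply the Lindeberg–Feller CLT to the real and imaginary parts of $(G_n(s_i,t_i))_{i\le k}$. The Lindeberg condition is automatic, since the summands are bounded by a fixed constant while the normalization is $(np_n)^{-1/2}$ with $np_n\to\infty$. Writing $\E[U_{jn}V_{jn}]=p_n c_n(s,t)$ with $c_n(s,t):=\varphi_{\frac{R}{r_n},\Theta|r_n}(s,t)-\varphi_{\frac{R}{r_n}|r_n}(s)\varphi_{\Theta|r_n}(t)$, one has
\[
	\E\big[G_n(s,t)\overline{G_n(s',t')}\big]=\tfrac{1}{p_n}\,\E\big[U_{1n}(s)V_{1n}(t)\overline{U_{1n}(s')V_{1n}(t')}\big]-p_n\,c_n(s,t)\overline{c_n(s',t')}.
\]
Here $c_n\to0$ by \eqref{eq:3}, so the last term vanishes; the first term is the conditional expectation given $R>r_n$ of a bounded continuous function of $(R/r_n,\mathbf\Theta)$ whose coefficients converge ($\varphi_{\frac{R}{r_n}|r_n}\to\varphi_R$ and $\varphi_{\Theta|r_n}\to\varphi_\Theta$ uniformly on compacta, cf. \eqref{eq:phi:r}–\eqref{eq:phi:theta}), and by the conditional weak convergence $\P\big((R/r_n,\mathbf\Theta)\in\cdot\mid R>r_n\big)\ciw\nu_\alpha\times S$ from \eqref{eq:3} it tends to $(\varphi_R(s-s')-\varphi_R(s)\varphi_R(-s'))(\varphi_\Theta(t-t')-\varphi_\Theta(t)\varphi_\Theta(-t'))$, which is \eqref{eq:Qcov}. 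The companion computation of $\E[G_n(s,t)G_n(s',t')]$ pins down the full joint law, identifying $Q$.

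For (ii), the crucial input is the uniform bound $\E|G_n(s,t)|^2\le C(1\wedge|s|^\beta)(1\wedge|t|^2)$ for all $n$, with $\beta$ as in the hypothesis. Indeed $\E|G_n(s,t)|^2=p_n^{-1}\var(U_{1n}V_{1n})\le p_n^{-1}\E|U_{1n}(s)V_{1n}(t)|^2+p_n|c_n(s,t)|^2$; for the angular factor the boundedness $\|\mathbf\Theta\|=1$ gives the deterministic estimate $|V_{1n}(t)|^2\le 4(1\wedge|t|^2)\mathbf{1}_{\{R_1>r_n\}}$, while for the radial factor I would write $U_{1n}(s)$ as a conditional expectation over an independent copy of $R$, use $2(1-\cos x)\le c_\beta|x|^\beta$ (valid for $\beta\le2$), the uniform boundedness of $\E[(R/r_n)^\beta\mid R>r_n]$ (Karamata/Potter bounds, using $\beta<\alpha$), and the concavity of $x\mapsto 4\wedge x$ to conclude $p_n^{-1}\E|U_{1n}V_{1n}|^2\le C(1\wedge|s|^\beta)(1\wedge|t|^2)$; the term $p_n|c_n|^2$ is controlled the same way after estimating $|c_n|$ via a Cauchy–Schwarz inequality for the conditional covariance. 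With $B_M:=\{(s,t):M^{-1}\le|s|\le M,\ M^{-1}\le|t|\le M\}$, condition \eqref{eq:weight} then gives $\sup_n\E\int_{B_M^c}|G_n|^2\mu\le C\int_{B_M^c}(1\wedge|s|^\beta)(1\wedge|t|^2)\mu\to0$ as $M\to\infty$, and likewise $\int_{B_M^c}|Q|^2\mu\to0$. On each compact $B_M$ a stochastic–equicontinuity estimate for $\E|\xi_{1n}(s,t)-\xi_{1n}(s',t')|^2$ (again using boundedness of the summands and the Lipschitz/Hölder modulus of $x\mapsto e^{ix}$) yields tightness of $\{G_n\}$ in $C(B_M)$, so $\int_{B_M}|G_n|^2\mu\cid\int_{B_M}|Q|^2\mu$ by (i) and the continuous mapping theorem; letting $M\to\infty$ and invoking a converging–together argument (e.g.\ Billingsley, Theorem 3.2) gives \eqref{eq:6}.

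The main obstacle is step (ii): one must control $G_n$ uniformly over the (possibly infinite) weight $\mu$, and the naive separate estimates of the radial and angular factors only yield the weaker $\min\{1\wedge|s|^\beta,\,1\wedge|t|^2\}$, whereas \eqref{eq:weight} concerns their product. Obtaining the sharp product bound $\E|G_n(s,t)|^2\le C(1\wedge|s|^\beta)(1\wedge|t|^2)$ forces one to treat the bounded component $\mathbf\Theta$ and the heavy–tailed component $R/r_n$ asymmetrically, as above; the CLT and the identification of the limiting covariance are then routine once \eqref{eq:3} and the Karamata-type moment convergence are in hand.
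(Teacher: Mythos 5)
Your proposal follows essentially the same skeleton as the paper's proof: (i) finite-dimensional convergence of $\sqrt{np_n}\tilde E_1$ via a triangular-array CLT (you invoke Lindeberg--Feller, the paper checks Lyapounov; in this bounded-summand/$np_n\to\infty$ regime these are interchangeable), (ii) tightness of the normalized process on compact sets, (iii) the uniform bound $\E|\sqrt{np_n}\tilde E_1(s,t)|^2\le C(1\wedge|s|^\beta)(1\wedge|t|^2)$ plus dominated convergence to handle the tails of $\mu$, and (iv) the continuous mapping theorem. Your identification of the key difficulty --- that one needs a genuine \emph{product} bound in $(s,t)$, not a minimum, which forces asymmetric treatment of the bounded $\Theta$ and the heavy-tailed $R/r_n$ --- is exactly the point, and your derivation of that bound via $|e^{ix}-1|^2\le c(1\wedge|x|^\beta)$, Karamata-type uniform boundedness of $\E[(R/r_n)^\beta\mid R>r_n]$, and extraction of $(1\wedge|s|^\beta)$ matches the paper's computation. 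Your remark that the $L^2(\mu)$ Hilbert-space viewpoint could in principle streamline the argument is reasonable, but you then fall back to the same compact-set truncation and converging-together route the paper uses.

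The one place where your sketch would not survive being filled in as written is step (ii). You propose to obtain tightness in $C(B_M)$ from a second-moment pairwise-increment bound $\E|\xi_{1n}(s,t)-\xi_{1n}(s',t')|^2$ of Hölder order $\beta$ (or $\beta\wedge 2$). In parameter dimension $d+1\ge 2$, Kolmogorov--Chentsov with only second moments would require Hölder exponent exceeding $d+1$, which is unattainable here since $\beta<2$. What actually makes the argument close with only second moments is the Bickel--Wichura criterion on \emph{rectangular increments}: the paper bounds $\E|\sqrt{np_n}\tilde E_{11}(B)|^2\le c|s-s'|^\beta\prod_k|t_k-t_k'|^\beta$ over boxes $B$, exploiting the multiplicative structure of the increment, and this suffices because the product of exponents $\beta>1$ in each coordinate is what the criterion needs. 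If you reinterpret your ``stochastic equicontinuity estimate'' as the Bickel--Wichura rectangle increment, the proof goes through; but as stated (a bound on $|\xi(s,t)-\xi(s',t')|$ only), it does not.
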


\begin{proof}[Proof of Proposition~\ref{prop:E1}]

We first show that
\beqq \label{eq:11}
	\sqrt{n p_n} \tilde E_1 \overset{d}\to Q(s,t), \quad \mbox{on } \mathcal{C}(\bbr^{d+1})
\eeqq
which can be implied by the finite distributional convergence of $\sqrt{n p_n} \tilde E_1(s,t)$ and its tightness on $\mathcal{C}(\bbr^{d+1})$. 

Write
$$
	\sqrt{n p_n} \tilde E_1 = \frac{1}{\sqrt{n}} \sum_{j=1}^n \left(\frac{U_{jn}V_{jn}}{\sqrt{p_n}} - \sqrt{p_n} (\varphi_{{\frac{R}{r_n}},\Theta|r_n}(s,t)- \varphi_{{\frac{R}{r_n}}|r_n}(s)\varphi_{\Theta|r_n}(t))\right) =: \frac{1}{\sqrt n} \sum_{j=1}^n Y_{jn},
$$
where $Y_{jn}$'s are iid random variables with mean 0. For fixed $(s,t)$, note that
$$
\var(Y_{1n}) = \E|Y_{1n}|^2 \,=\, \frac{\E|U_{1n}V_{1n}|^2}{p_n} (1+o(1)) \,=\, \frac{\E\mathbf{1}_{\{R_1>r_n\}}}{p_n}O(1) \, < \infty.
$$
On the other hand, any $\delta>0$, 
$$
\E|Y_{1n}|^{2+\delta}  \,=\, \frac{\E|U_{1n}V_{1n}|^{2+\delta}}{p_n^{1+\delta/2}} (1+o(1)) \,\le\, c\frac{\E\mathbf{1}_{\{R_1>r_n\}}}{p_n^{1+\delta/2}}(1+o(1)) = O(p_n^{-\delta/2})
$$
Then we can apply the central limit theorem for triangular arrays by checking the Lyapounov condition (see, e.g., \cite{billingsley:1995}) for the $Y_{jn}$'s:
$$
	\frac{ \sum_{j=1}^n \E|Y_{jn}|^{2+\delta}}{\left(\var\left( \sum_{j=1}^n Y_{jn}\right)\right)^\frac{2+\delta}{2}} = \frac{O(np_n^{-\frac{\delta}{2}})}{{n^{1+\frac{\delta}{2}}\var(Y_{1n})^{1+\frac{\delta}{2}}}} = O((np_n)^{-\frac{\delta}{2}} ) \to 0.
$$
It follows easily that for fixed $(s,t)$,
$$
	\sqrt{n p_n} \tilde E_1 \cid Q(s,t).
$$
The finite-dimensional distribution can be obtained using the Cram\'er-Wold device and the covariance function can be verified through calculations. 

We now show the tightness of $\sqrt{np_n} \tilde E_{1}$.  Note that
\beao
	\tilde E_{1}(s,t) &=& \frac{1}{n}\sum_{j=1}^n \frac{\left(e^{isR_j/r_n} 
			- \varphi_{{\frac{R}{r_n}}|r_n}(s)\right) \left(e^{it^T\mathbf\Theta_j} 
			- \varphi_{\Theta|r_n}(t)\right) \mathbf{1}_{\{R_j>r_n\}}}{p_n} \\
			&& \quad- \left(\varphi_{{\frac{R}{r_n}},\Theta|r_n}(s,t)- \varphi_{{\frac{R}{r_n}}|r_n}(s)\varphi_{\Theta|r_n}(t)\right) \\
			&=& \left(\frac{1}{n}\sum_{j=1}^n \frac{e^{isR_j/r_n+it^T\mathbf\Theta_j}  \mathbf{1}_{\{R_j>r_n\}}}{p_n} - \varphi_{{\frac{R}{r_n}},\Theta|r_n}(s,t)\right) \\
			&& \quad - \left(\frac{1}{n}\sum_{j=1}^n \frac{e^{isR_j/r_n}  \mathbf{1}_{\{R_j>r_n\}}}{p_n} - \varphi_{{\frac{R}{r_n}}|r_n}(s)\right)\varphi_{\Theta|r_n}(t) \\
			&& \qquad - \left(\frac{1}{n}\sum_{j=1}^n \frac{e^{it^T\mathbf\Theta_j}  \mathbf{1}_{\{R_j>r_n\}}}{p_n} - \varphi_{\Theta|r_n}(t)\frac{\hat p_n}{p_n}\right) \varphi_{{\frac{R}{r_n}}|r_n}(s) \\
			&=:& \tilde E_{11} + \tilde E_{12} + \tilde E_{13}.
\eeao
Without loss of generality, we show the tightness for $\sqrt{np_n} \tilde E_{11}$ and that of $\sqrt{np_n} \tilde E_{12}$ and $\sqrt{np_n} \tilde E_{13}$ follows from the same argument.

First we introduce some notation following that from \cite{bickel:wichura:1971}.  Fix $(s,t),(s',t') \in \bbr^{d+1}$ where $s<s'$ and $t<t'$.  Let $B$ be the subset of $\bbr^{d+1}$ of the form
$$
	B:= \left((s,t),(s',t')\right] = (s,s'] \times \prod_{k=1}^d (t_k,t_k'] \subset \bbr^{d+1}.
$$
For ease of notation, we suppress the dependence of $B$ on $(s,t), (s',t')$.
Define the increment of the stochastic process $\revise{\tilde E_{11}}$ on $B$ to be
\beao
	\tilde E_{11}(B) &:=&\frac{1}{n}\sum_{j=1}^n \sum_{z_0=0,1} \sum_{z_1=0,1} \cdots \sum_{z_d=0,1} (-1)^{d+1-\sum_j z_j} \\
	&& \qquad\tilde E_{11}\left(s+z_0(s'-s), t_1+z_1(t_1'-t_1) ,\ldots, t_d+z_d(t_d'-t_d)\right).
\eeao
From a sufficient condition of Theorem~3 of \cite{bickel:wichura:1971}, the tightness of $\sqrt{np_n} \tilde E_{1}$ is implied if the following statement holds for any $(s,t),(s',t')$ and corresponding $B$,
\beqo
	 \E|\sqrt{np_n}\tilde E_{11}(B)|^2 \le c|s-s'|^\beta \prod_{k=1}^d|t_k-t_k'|^\beta, \quad \text{for some }\beta>1.
\eeqo
It follows that
\beam
	&&\E \left| \sqrt{n p_n} \left(\tilde E_{11}(B)\right) \right|^2 \nonumber\\
	&=& np_n \E\left| \sum_{z_0=0,1}\cdots \sum_{z_d=0,1}(-1)^{d+1-\sum_j {z_j}}\frac{1}{n}\sum_{j=1}^n  e^{i(s+z_0(s'-s)){R_j}/r}\prod_{k=1}^d e^{i(t_k+z_k(t'_k-t_k))\Theta_{jk}}\frac{ \mathbf{1}_{\{R_j>r_n\}}}{p_n} \right.\nonumber\\
	&& \quad\quad\quad \left.- \sum_{z_0=0,1}\cdots \sum_{z_d=0,1}(-1)^{d+1-\sum_j {z_j}}\E \left[\left(e^{i(s+z_0(s'-s))R/r}\right)\prod_{k=1}^d e^{i(t_k+z_k(t'_k-t_k))\Theta_{k}}\frac{ \mathbf{1}_{\{R>r_n\}}}{p_n} \right]\right|^2\nonumber\\
	&=& np_n \E\left| \frac{1}{n}\sum_{j=1}^n (e^{isR_j/r_j} - e^{is'R_j/r_j})\prod_{k=1}^d (e^{it_k\Theta_{jk}} - e^{it'_k\Theta_{jk}})\frac{ \mathbf{1}_{\{R_j>r_n\}}}{p_n} \right.\nonumber\\
	&& \quad\quad\quad \left.- \E \left[(e^{isR/r} - e^{is'R/r})\prod_{k=1}^d (e^{it_k\Theta_k} - e^{it'_k\Theta_k})\frac{ \mathbf{1}_{\{R>r_n\}}}{p_n}\right]\right|^2 \nonumber\\
	&=& p_n \var\left( (e^{isR/r_j} - e^{is'R/r_j})\prod_{k=1}^d (e^{it_k\Theta_{k}} - e^{it'_k\Theta_{k}})\frac{ \mathbf{1}_{\{R>r_n\}}}{p_n} \right) \label{eq:iidsum}\\
	&\le& \E \left[\left|(e^{isR/r} - e^{is'R/r})\prod_{k=1}^d (e^{it_k\Theta_{k}} - e^{it'_k\Theta_{k}})\right|^2\middle| R>r_n\right].\nonumber
\eeam
From a Taylor series argument,
$$
	|e^{ix} - e^{ix'}|^2 \le c1\wedge|x-x'|^2 \le c1\wedge|x-x'|^\beta \le c|x-x'|^\beta, \quad \text{for any $\beta\in(0,2]$. }
$$
Hence for any $\beta \in (1,2\wedge\alpha)$,
\beao
	\E \left| \sqrt{n p_n} \tilde E_{11}(B) \right|^2 &\le& c|s-s'|^\beta \prod_{k=1}^d|t_i-t_i'|^\beta \E\left[(R/r_n)^\beta \prod_{k=1}^d|\Theta_k|^\beta|R>r_n\right] \\
	&<& c|s-s'|^\beta \prod_{k=1}^d|t_i-t_i'|^\beta,
\eeao
since $|\Theta_k|^\beta$'s are bounded and ${\sup_n}\E[(R/r_n)^\beta|R>r_n]<\infty$ by the regular variation assumption. This proves the tightness.

Define the bounded set
$$
	K_\delta = \{(s,t)|\ \delta < |s|<1/\delta,\delta <|t| <1/\delta\}, \quad \text{for $\delta<.5$. }
$$ 
Then, using \eqref{eq:11}, we have from the continuous mapping theorem,
\beqq \label{eq:bdconv}
	np_n\int_{K_\delta} |\tilde E_1|^2 \mu(ds,dt) \cid \int_{K_\delta} |Q(s,t)|^2 \mu(ds,dt).
\eeqq
On the other hand, for any $\beta<2\wedge\alpha$, we have
\beam
	&&\E|\sqrt{n p_n} \tilde E_1|^2 \nonumber\\
	&=& np_n \E \left|\frac{1}{n} \sum_{j=1}^n \left(\frac{U_{jn}V_{jn}}{{p_n}} - \E\left[\frac{U_{jn}V_{jn}}{{p_n}}\right]\right)\right|^2 \nonumber\\
	&\le& \frac{\E|U_{jn}V_{jn} - \E U_{jn}V_{jn}|^2}{p_n} \nonumber\\
	&\le& \frac{c\, \E|U_{jn}V_{jn}|^2}{p_n} \label{eq:unvn:bd}\\
	&= & \frac{c\,\E\left[\left|e^{isR_j/r_n} - \varphi_{{\frac{R}{r_n}}|r_n}(s)\right|^2\left|e^{it^T\mathbf\Theta_j} - \varphi_{\Theta|r_n}(t) \right|^2 \mathbf{1}_{\{R_j>r_n\}}\right]}{p_n} \nonumber\\
	&\le & 
	{
	\frac{c\,\E\left[\left(\left|e^{isR_j/r_n} -1\right|^2+\left| \varphi_{{\frac{R}{r_n}}|r_n}(s)-1 \right|^2\right) \left( \left|e^{it^T\mathbf\Theta_j}-1\right|^2 + \left|\varphi_{\Theta|r_n}(t)-1 \right|^2\right) \mathbf{1}_{\{R_j>r_n\}}\right]}{p_n} 
	}\nonumber\\
	&\le & 
	{
	 \frac{c\,\E\left[\left(1 \wedge \left|\frac{sR_j}{r_n}\right|^2 + \E\left[1 \wedge \left|\frac{sR_j}{r_n}\right|^2 |\frac{R}{r_n}>1\right] \right)\left(1\wedge|t\mathbf\Theta_j|^2 + \E\left[1\wedge|t\mathbf\Theta_j|^2|\frac{R}{r_n}>1\right]\right) \mathbf{1}_{\{R_j>r_n\}}\right]}{p_n} 
	}\nonumber\\
	&\le & 
	{
	 \frac{c\,\E\left[\left(1 \wedge \left|\frac{sR_j}{r_n}\right|^\beta + \E\left[1 \wedge \left|\frac{sR_j}{r_n}\right|^\beta |\frac{R}{r_n}>1\right] \right)\left(1\wedge|t\mathbf\Theta_j|^2 + \E\left[1\wedge|t\mathbf\Theta_j|^2|\frac{R}{r_n}>1\right]\right) \mathbf{1}_{\{R_j>r_n\}}\right]}{p_n} 
	}\nonumber\\
	&\le & \frac{c\,\E\left[\left(1 \wedge |s|^\beta\right) \left(\left|\frac{R_j}{r_n}\right|^\beta + \E\left[\left|\frac{R}{r_n}\right|^\beta |\frac{R}{r_n}>1\right] \right)\left(1\wedge|t|^2\right) \mathbf{1}_{\{R_j>r_n\}}\right]}{p_n} \nonumber\\
	&\le& c\,\E\left[\left(1 \wedge |s|^\beta (|R_j/r_n|^\beta + \E[|R/r_n|^\beta |R>r_n]) \right)\left(1\wedge|t|^2\right) |R>r_n\right]\nonumber \\
	&\le& c(1\wedge |s|^{\beta})(1\wedge |t|^{2}).\nonumber
\eeam
Therefore for any $\epsilon>0$,
\beao
	\lim_{\delta\to 0}\limsup_{n\to\infty} \P\left[np_n\int_{K^c_\delta} |\tilde E_1|^2 \mu(ds,dt) >\epsilon\right] 
		&\le& \frac{1}{\epsilon} \lim_{\delta\to 0}\limsup_{n\to\infty} \int_{K^c_\delta} \E|\sqrt{np_n}\tilde E_1|^2 \mu(ds,dt) \\
		&\le& \frac{1}{\epsilon} \lim_{\delta\to 0}\limsup_{n\to\infty} \int_{K^c_\delta} c (1\wedge |s|^{\beta})(1\wedge |t|^{2}) \mu(ds,dt) \to 0
\eeao
by the dominated convergence theorem. This combined with \eqref{eq:bdconv} shows the convergence of $ np_n\int |\tilde E_1|^2 \mu(ds,dt)$ to $\int |Q(s,t)|^2 \mu(ds,dt)$, and hence completes the proof of the proposition.
\qed
\end{proof}


\begin{proof}[Proof of Theorem \ref{thm:1}(2) (cont.)]

Now it remains to show \eqref{eq:toshow6}. Similar to the proof of Proposition~\ref{prop:E1}, we can show that
$$
	\sqrt{np_n} \tilde E_{21} \cid Q'
$$
for a centered Gaussian process $Q'$, and
$$
	\tilde E_{22} \cip 0.
$$
Hence
$$
	\sqrt{np_n} \tilde E_{2} = \sqrt{np_n} \tilde E_{21}\tilde E_{22} \cip 0.
$$
The argument then follows similarly from the continuous mapping theorem and bounding the tail integrals.
\qed
\end{proof}


\begin{proof}[Proof of Theorem \ref{thm:1}(1)]

Similar to the proof of Theorem \ref{thm:1}(2), it suffices to show that 
\beqq \label{eq:13}
	 \int|\tilde E_i|^2\mu(ds,dt) \cip 0,\quad i=1,2,3.
\eeqq
The convergence \eqref{eq:13} for $i=1,2$ follows trivially from the more general results \eqref{eq:toshow4} and \eqref{eq:toshow6} in the proof of Theorem \ref{thm:1}(2). Hence it suffices to show
\beqq \label{eq:15}
	\int|\tilde E_3|^2\mu(ds,dt) \to 0,
\eeqq
where we recall that $\tilde E_3:=\varphi_{{\frac{R}{r_n}},\Theta|r_n}(s,t)- \varphi_{{\frac{R}{r_n}}|r_n}(s)\varphi_{\Theta|r_n}(t)$ is non-random.  

Let $P_{{\frac{R}{r_n}},\Theta|r_n}(\cdot) = P\left[\left(\frac{R}{r_n},\mathbf\Theta\right) \in \cdot | \frac{R}{r_n} > 1 \right] $ and $P_{{\frac{R}{r_n}}|r_n},P_{\Theta|r_n}$ be the corresponding marginal measures. Then from \eqref{eq:3},
$$
	P_{{\frac{R}{r_n}},\Theta|r_n} - P_{{\frac{R}{r_n}}|r_n}P_{\Theta|r_n} {\,\civ\, \nu_\alpha \times S-\nu_\alpha \times S = 0},
$$
and hence for fixed $(s,t)$,
$$
	\tilde E_3(s,t) = \int e^{is{T} + it^T\mathbf\Theta} \,(P_{{\frac{R}{r_n}},\Theta|r_n} - P_{{\frac{R}{r_n}}|r_n}P_{\Theta|r_n})(d{T},d\mathbf\Theta) \to 0.
$$
{For any $\beta < 2 \wedge\alpha$, using the same argument in \eqref{eq:unvn:bd}, }
$$
	|\tilde E_3|^2 = \left(\frac{\E|U_{jn}V_{jn}|}{p_n}\right)^2 \le c(1\wedge |s|^{\beta})(1\wedge |t|^{2}).
$$
Then \eqref{eq:15} follows from \eqref{eq:weight} and dominated convergence.
This concludes the proof. \qed

\end{proof}


\section{Proof of Theorem~\ref{thm:2}}\label{app:2}

Following the same notation and steps as the proof of Theorem~\ref{thm:1} in Appendix~\ref{app:1}, it suffices to prove the following convergences for the mixing case:
\beqq\label{eq:ap2:1}
	\frac{\hat p_n}{p_n} \cip1, 
\eeqq
\beqq\label{eq:ap2:2}
	n p_n  \int_{\bbr^{d+1}} |\tilde E_1|^2\mu(ds,dt) \cid \int_{\bbr^{d+1}} |Q'(s,t)|^2\mu(ds,dt)
\eeqq
and
\beqq \label{eq:ap2:3}
	 n p_n  \int_{\bbr^{d+1}} |\tilde E_2|^2\mu(ds,dt)\cip 0.
\eeqq
We prove \eqref{eq:ap2:1} and \eqref{eq:ap2:2} in Propositions~\ref{prop:ap2:1} and \ref{prop:ap2:2}, respectively. The proof of \eqref{eq:ap2:3} follows in a similar fashion.   The proofs of both propositions rely on the following lemma. 

\revise{
Throughout this proof we make use of the results that  if $\{Z_t\}$ is stationary and $\alpha$-mixing with coefficient $\{\alpha_h\}$, then
\beqq \label{eq:alpha:ineq}
	|\cov(Z_0,Z_h)| \le c \alpha_h^{\delta} \left(\E|Z_0|^{2/(1-\delta)}\right)^{1-\delta}, \quad \text{for any $\delta \in (0,1)$},
\eeqq
see Section 1.2.2, Theorem 3(a) of \cite{doukhan:1994}.
}

\begin{lemma} \label{lemma:fancy}
Let $\{\bfX_t\}$ be a multivariate stationary time series that is regularly varying and $\alpha$-mixing with mixing coefficient $\{\alpha_h\}$.  For a sequence $r_n\to\infty$, set $p_n=\P(\|\bfX_0\|>r_n)$.  
Let $f_1,f_2$ be bounded functions \revise{which vanish outside $\overline{\bbr}^d\backslash B_1(\bf0)$}, where $B_1(\bf0)$ is the unit open ball $\{\bfx|\|\bfx\|<1\}$, with sets of discontinuity of measure zero. Set,
$$
	S_n^{(i)} = \sum_{t=1}^n \left( f_i\left( \frac{\bfX_t}{r_n}\right) - \E f_i\left(\frac{\bfX_0}{r_n}\right) \right), \quad i=1,2.
$$
Assume that condition~\hyperref[cond:m]{({\bf M})} holds for $\{\alpha_h\}$ and $\{r_n\}$.
Then
\beqq \label{eq:lm:result}
	\frac{1}{\sqrt{np_n}} (S_n^{(1)},S_n^{(2)})^T \cid N(\bf0,\Sigma),
\eeqq
where the covariance matrix $[\Sigma_{ij}]_{i,j=1,2} = [\sigma^2(f_i,f_j)]_{i,j=1,2}$ with
\beqq \label{eq:covcal:1}
	\sigma^2(f_1,f_2):= \sigma^2_0(f_1,f_2) + 2 \sum_{h=1}^\infty \sigma_h^2(f_1,f_2)
\eeqq
and
\beqq \label{eq:covcal:2}
	\sigma_h^2(f_1,f_2) = \int f_1f_2 d\mu_h,\quad h\ge0.
\eeqq
In particular,
$$
	\frac{1}{{np_n}} (S_n^{(1)},S_n^{(2)})^T \cip \bf0.
$$
\end{lemma}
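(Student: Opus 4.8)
The plan is to deduce the two–dimensional limit \eqref{eq:lm:result} from a one–dimensional CLT via the Cram\'er--Wold device, prove that one–dimensional CLT by a big–block/small–block argument, and then read off the last assertion from the variance computation.

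\textbf{Reduction to one dimension.} Fix $c=(c_1,c_2)^T\in\bbr^2$ and put $f=c_1f_1+c_2f_2$; this $f$ is again bounded, vanishes on the open unit ball $B_1(\mathbf 0)$, and has a set of discontinuities of measure zero, so $c_1S_n^{(1)}+c_2S_n^{(2)}=S_n:=\sum_{t=1}^n\big(g_t-\E g_0\big)$ with $g_t:=f(\bfX_t/r_n)$. Since $\sigma_h^2(f,g)=\int f(\mathbf x)g(\mathbf y)\,\mu_h(d\mathbf x,d\mathbf y)$ is bilinear in $(f,g)$ by \eqref{eq:covcal:2}, and hence so is $\sigma^2(\cdot,\cdot)$ by \eqref{eq:covcal:1}, it suffices to show $\tfrac1{\sqrt{np_n}}S_n\cid N(0,\sigma^2(f,f))$ for an arbitrary $f$ of the stated form; the joint statement $N(\mathbf 0,\Sigma)$ then follows because the limiting variance equals $\sigma^2(c_1f_1+c_2f_2,c_1f_1+c_2f_2)=c^T\Sigma c$. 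Throughout, $M:=\|f\|_\infty$, $|g_t|\le M\mathbf 1_{\{\|\bfX_t\|\ge r_n\}}$, so $\E|g_0|^q\le c\,p_n$ for every fixed $q>0$ and all $n$ large, where $c$ is a generic constant.

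\textbf{Variance.} By stationarity $\tfrac1{np_n}\var(S_n)=\sum_{|h|<n}(1-|h|/n)\,\cov(g_0,g_h)/p_n$. Split the sum over $|h|\le h_0$, $h_0<|h|\le l_n$, and $l_n<|h|<n$. For fixed $h$, $\cov(g_0,g_h)/p_n\to\sigma_h^2(f,f)$ by the regular variation of $\{\bfX_t\}$ (apply \eqref{eq:mrv:conv} to the bounded, a.e.-continuous map $(\mathbf x,\mathbf y)\mapsto f(\mathbf x)f(\mathbf y)$, which is supported away from the origin, and use $\E g_0=O(p_n)$). For $h_0<|h|\le l_n$ use $|\cov(g_0,g_h)|\le M^2\P(\|\bfX_0\|\ge r_n,\|\bfX_{|h|}\|\ge r_n)+M^2p_n^2$, so that this block is $o(1)$ upon letting $n\to\infty$ and then $h_0\to\infty$, by \eqref{eq:lm:cond:2} and $l_np_n\to0$. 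For $l_n<|h|<n$ the mixing inequality \eqref{eq:alpha:ineq} together with the moment bound above gives $|\cov(g_0,g_h)|\le c\,\alpha_{|h|}^\delta p_n^{1-\delta}$, so the block is at most $c\,(p_n^{-1})^\delta\sum_{h\ge l_n}\alpha_h^\delta\to0$ by \eqref{eq:lm:cond:1}. Hence $\tfrac1{np_n}\var(S_n)\to\sum_{h\in\mathbb Z}\sigma_h^2(f,f)=\sigma^2(f,f)$; the same bounds give $\sum_h|\sigma_h^2(f,f)|<\infty$, so $\sigma^2(f,f)$ is finite (and nonnegative, being a limit of variances).

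\textbf{Blocking and the CLT.} Let $a_n=\lfloor p_n^{-1}\rfloor$ and let $\{l_n\}$ be the sequence from condition (\textbf{M}). Then $a_n\to\infty$, $l_n/a_n=l_np_n(1+o(1))\to0$, $a_np_n\to1$, $a_n=o(n^{1/3})$, and $p_n^{-1}=o(\sqrt{np_n})$ since $np_n^3\to\infty$. Partition $\{1,\dots,n\}$ into $k_n\sim n/a_n$ alternating big blocks of length $a_n$ and small blocks of length $l_n$ (plus a negligible remainder) and write $S_n=\sum_{j=1}^{k_n}B_j+\sum_{j=1}^{k_n}b_j+\mathrm{rem}$ for the corresponding partial sums. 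Estimating the covariances of $\sum_jb_j$ exactly as in the variance step over the $k_nl_n$ indices it covers gives $\E(\sum_jb_j)^2\le c\,k_nl_np_n$, so $\tfrac1{\sqrt{np_n}}\sum_jb_j\cip0$ because $k_nl_n/n\le l_n/a_n\to0$; the remainder is handled the same way. Replacing $(B_1,\dots,B_{k_n})$ by independent copies changes characteristic functions by at most $c\,k_n\alpha_{l_n}\le c\,np_n\alpha_{l_n}/(a_np_n)\to0$, using \eqref{eq:lm:cond:3} and $a_np_n\to1$. For the resulting independent array the Lindeberg condition is automatic, since $|B_j|\le 2Ma_n=o(\sqrt{np_n})$ is a deterministic bound; and $\E B_j^2=\sum_{|h|<a_n}(a_n-|h|)\cov(g_0,g_h)=a_np_n\sigma^2(f,f)(1+o(1))$ by the computation of the previous paragraph with $a_n$ in place of $n$, whence $\tfrac1{np_n}\sum_{j=1}^{k_n}\var(B_j)=\tfrac{k_na_n}{n}\sigma^2(f,f)(1+o(1))\to\sigma^2(f,f)$. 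The Lindeberg CLT now gives $\tfrac1{\sqrt{np_n}}\sum_jB_j\cid N(0,\sigma^2(f,f))$, and combining the three contributions yields $\tfrac1{\sqrt{np_n}}S_n\cid N(0,\sigma^2(f,f))$, i.e.\ \eqref{eq:lm:result}. Finally, from the variance step, $\E|\tfrac1{np_n}S_n^{(i)}|^2=\var(S_n^{(i)})/(np_n)^2=\sigma^2(f_i,f_i)(1+o(1))/(np_n)\to0$ since $np_n\to\infty$, so $\tfrac1{np_n}(S_n^{(1)},S_n^{(2)})^T\to\mathbf 0$ in $L^2$, hence in probability.

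The main obstacle is the variance identification, and in particular justifying the interchange of the limit in $n$ with the infinite sum over lags: this is precisely where the three parts of condition (\textbf{M}) enter — \eqref{eq:lm:cond:2} (with $l_np_n\to0$) controls the intermediate lags, \eqref{eq:lm:cond:1} controls the far lags through mixing, and \eqref{eq:lm:cond:3} licenses the independent-block coupling. Once a suitable auxiliary block length $a_n$ is fixed (here $a_n=\lfloor p_n^{-1}\rfloor$ works, using $p_n^{-1}=o(n^{1/3})$), the remaining block bookkeeping is routine.
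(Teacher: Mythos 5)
Your proposal is correct and follows essentially the same route as the paper's proof: Cram\'er--Wold reduction to a single bounded $f$ supported off the unit ball, identification of $\lim n^{-1}p_n^{-1}\var(S_n)$ by splitting the lag sum into fixed, intermediate ($\le l_n$, controlled by \eqref{eq:lm:cond:2} and $l_np_n\to0$) and far ($>l_n$, controlled by \eqref{eq:lm:cond:1} via \eqref{eq:alpha:ineq}) pieces, and a big/small block scheme with big block $\sim p_n^{-1}$, independent coupling via \eqref{eq:lm:cond:3}, and a Lindeberg CLT for the independent blocks. The only cosmetic differences are that you invoke Cram\'er--Wold at the outset rather than at the end, and you verify Lindeberg via the deterministic bound $|B_j|\le 2Ma_n=o(\sqrt{np_n})$ (which is in fact slightly cleaner than the paper's Chebyshev estimate).
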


The proof of Lemma~\ref{lemma:fancy} is provided after the proofs of the propositions.

\bpr \label{prop:ap2:1}
	Assume that condition~\hyperref[cond:m]{({\bf M})} holds, then 
	$$
		\frac{\hat p_n}{p_n} \cip1, 
	$$
\epr

\bpf
We have
$$
	\frac{\hat p_n}{{p}_n}-1 = \frac{1}{n}  \sum_{j=1}^n\left(\frac{ \mathbf{1}_{\{R_j>r_n\}}}{p_n} -1\right) 
	= \frac{1}{np_n}  \sum_{j=1}^n ( \mathbf{1}_{\{R_j>r_n\}} - p_n).
$$
Apply Lemma~\ref{lemma:fancy} to $f(\bfx) =  \mathbf{1}_{\{\|\bfx\|>1\}}$ and the result follows.\qed
\epf

\bpr \label{prop:ap2:2}
	Assume that condition~\hyperref[cond:m]{({\bf M})} holds, and that $\mu$ and $\{r_n\}$ satisfies \eqref{eq:weight} and \eqref{eq:cond}, respectively, then 
	$$
		n p_n  \int_{\bbr^{d+1}} |\tilde E_1|^2\mu(ds,dt) \cid \int_{\bbr^{d+1}} |Q'(s,t)|^2\mu(ds,dt),
	$$
where $Q'$ is a centered Gaussian process.
\epr

\begin{proof}

Let us first establish the convergence of $\sqrt{np_n}\tilde E_1(s,t)$ for fixed $(s,t)$. Take
\beao
	f_1(\bfx) &=& \text{Re}\left\{ \left(e^{is\|\bfx\|}-\E [e^{is\|\bfx\|}|\|\bfx\|>1]\right) \left(e^{it\bfx/\|\bfx\|} - \E [e^{it\bfx/\|\bfx\|}|\|\bfx\|>1]\right) {\bf1}_{\|\bfx\|>1}\right\} \\
	f_2(\bfx) &=& \text{Im}\left\{ \left(e^{is\|\bfx\|}-\E [e^{is\|\bfx\|}|\|\bfx\|>1]\right) \left(e^{it\bfx/\|\bfx\|} - \E [e^{it\bfx/\|\bfx\|}|\|\bfx\|>1]\right) {\bf1}_{\|\bfx\|>1}\right\}.
\eeao
Then from Lemma~\ref{lemma:fancy},
$$
	\frac{1}{\sqrt{np_n}} (S_n^{(1)},S_n^{(2)})^T = \sqrt{np_n} (\text{Re}\{\tilde E_1(s,t)\}, \text{Im}\{\tilde E_1(s,t)\}) \cid N(\bf0,\Sigma),
$$
where the covariance structure $\Sigma$ can be derived from \eqref{eq:covcal:1} and \eqref{eq:covcal:2}. \revise{This implies that
$$
	\sqrt{np_n} \tilde E_1(s,t) \cid Q'(s,t),
$$
where $Q'(s,t)$ is a zero-mean complex normal process with covariance matrix $\Sigma_{11}+\Sigma_{22}$ and relation matrix $\Sigma_{11}-\Sigma_{22}+i(\Sigma_{12}+\Sigma_{21})$.}

The finite-dimensional distributional convergence of $\sqrt{n\hat p_n} \tilde{E}_1$ to a $Q'(s,t)$ can be generalized using the Cram\'er-Wold device and we omit the calculation of the covariance structure. {The tightness condition for the functional convergence follows the same arguments in the proof of Proposition~\ref{prop:E1} from \cite{bickel:wichura:1971}, with equality \eqref{eq:iidsum} replaced by a variance calculation of the sum of $\alpha$-mixing components \revise{using the inequality \eqref{eq:alpha:ineq}} and condition \eqref{eq:bdconv} is verified through the same argument. This completes the proof of Proposition~\ref{prop:ap2:2}.} \qed

\end{proof}

\begin{proof}[of Lemma~\ref{lemma:fancy}]
The proof follows from that of Theorem~3.2 in \cite{davis:mikosch:2009}.  Here we outline the sketch of the proof and detail only the parts that differ from their proof.

By the vague convergence in \eqref{eq:mrv:conv}, we have \\
i)
$$
	\frac{1}{p_n} \E \left[f_i \left(\frac{\bfX_0}{r_n}\right)\right]  \,\to\, \int f_i d\mu_0 \quad\text{and} \quad \frac{1}{p_n} \E \left[f_i^2 \left(\frac{\bfX_0}{r_n}\right)\right]  \,\to\, \int f^2_i d\mu_0;
$$
ii)
$$
	\frac{1}{p_n} \var \left[f_i \left(\frac{\bfX_0}{r_n}\right)\right] \,=\, \frac{1}{p_n} \E \left[f_i^2 \left(\frac{\bfX_0}{r_n}\right)\right] -  p_n \left(\frac{1}{p_n}\E \left[f_i \left(\frac{\bfX_0}{r_n}\right)\right]\right)^2 \,\to\, \int f^2_i d\mu_0 = \sigma^2_0(f_i,f_i);
$$	
iii)
\beao
	\frac{1}{p_n} \cov \left[f_i \left(\frac{\bfX_0}{r_n}\right), f_j\left(\frac{\bfX_h}{r_n}\right)\right] 
	& \to& \int f_i f_j d\mu_h \,=\, \sigma^2_h(f_i,f_j).
\eeao
Let us first consider the marginal convergence of $\frac{1}{\sqrt{np_n}}S_n^{(i)}$ for $i=1,2$.  Without loss of generality, we suppress the dependency on $i$ and set
$$
	Y_{tn} := f\left( \frac{\bfX_t}{r_n}\right) - \E f\left(\frac{\bfX_0}{r_n}\right).
$$
Then 
$$
	\frac{1}{p_n} \var \left[Y_{1n}\right]  \to  \sigma^2_0(f,f) \quad \text{and} \quad
	\frac{1}{p_n} \cov \left(Y_{1n}, Y_{(h+1)n}\right) \to \sigma^2_h(f,f).
$$
We also have the following two results for $|\cov(Y_{1n}, Y_{(h+1)n})|$:
\beam
	&& \lim_{h\to\infty}\limsup_{n\to\infty}\sum_{j=h}^{l_n} \frac{1}{p_n}|\cov(Y_{1n}, Y_{(j+1)n})| \nonumber\\
	&\le& \lim_{h\to\infty}\limsup_{n\to\infty}\sum_{j=h}^{l_n} \frac{1}{p_n}\E\left|f \left(\frac{\bfX_0}{r_n}\right) f\left(\frac{\bfX_j}{r_n}\right) \right| + \sum_{j=h}^{l_n}\frac{1}{p_n}\E\left(\left|f \left(\frac{\bfX_0}{r_n}\right) \right|\right)^2 \nonumber\\
	&\le& \lim_{h\to\infty}\limsup_{n\to\infty}\sum_{j=h}^{l_n}  \frac{c}{p_n}\E\left({\bf1}_{\{\|\bfX_0\|>r_n\}}\right)\left({\bf1}_{\{\|\bfX_j\|>r_n\}}\right) + \sum_{j=h}^{l_n}\frac{c}{p_n}\left(\E{\bf1}_{\{\|\bfX_0\|>r_n\}}\right)^2 \nonumber\\
	&\le& \lim_{h\to\infty}\limsup_{n\to\infty}\sum_{j=h}^{l_n} \frac{c}{p_n}\P\left(\|\bfX_0\|>r_n, \|\bfX_j\|>r_n\right) + cl_np_n\nonumber \\
	&=&0 \label{eq:covsum:2}
\eeam
from condition~\eqref{eq:lm:cond:2},
and
\revise{
\beam
	\lim_{n\to\infty}\sum_{j=l_n}^\infty \frac{1}{p_n}|\cov(Y_{1n}, Y_{(j+1)n})|  
	&\le& \lim_{n\to\infty}\sum_{j=l_n}^\infty\frac{1}{p_n}\left| \cov \left[f \left(\frac{\bfX_0}{r_n}\right), f\left(\frac{\bfX_j}{r_n}\right)\right] \right|\nonumber \\
	&\le& \lim_{n\to\infty}\sum_{j=l_n}^\infty\frac{1}{p_n}\alpha_j^{\delta} \left(\E\left| f \left(\frac{\bfX_0}{r_n}\right)\right|^{2/(1-\delta)}\right)^{1-\delta} \nonumber \\
	&\le& \lim_{n\to\infty}\sum_{j=l_n}^\infty\frac{c}{p_n}\alpha_j^{\delta} \left(\E\left({\bf1}_{\{\|\bfX_0\|>r_n\}}\right)^{2/(1-\delta)}\right)^{1-\delta}\nonumber \\
	&\le& \lim_{n\to\infty}\sum_{j=l_n}^\infty c\alpha_j^\delta p_n^{-\delta} \nonumber \\
	&=& 0 \label{eq:covsum:1}
\eeam
}from condition~\eqref{eq:lm:cond:1}.

We apply the same technique of small/large blocks as used in \cite{davis:mikosch:2009}.  Let $m_n$ and $l_n$ be the sizes of big and small blocks, respectively, where $l_n\ll m_n\ll n$.  Let $I_{kn} = \{(k-1)m_n+1,\ldots,km_n\}$ and $J_{kn}=\{(k-1)m_n+1,\ldots,(k-1)m_n+l_n\}$, $k=1,\ldots, n/m_n$, be the index sets of big and small blocks respectively. Set $\tilde{I}_{kn} = I_{kn}\backslash J_{kn}$, i.e., $\tilde{I}_{kn}$ are the big blocks with the first $l_n$ observations removed. For simplicity,  we set $m_n:=1/p_n$ and assume that the number of big blocks $n/m_n=np_n$ is integer-valued.  The non-integer case can be generalized without additional difficulties.

Denote
$$
	S_n(B) := \sum_{t\in B} Y_{tn},
$$
then
$$
	\sum_{t=1}^n Y_{tn} = S_n(1:n) = \sum_{k=1}^{np_n} S_n(I_{kn}) = \sum_{k=1}^{np_n} S_n(\tilde{I}_{kn}) + \sum_{k=1}^{np_n} S_n(J_{kn}).
$$
Let $\{\tilde{S}_n(\tilde{I}_{kn})\}_{k=1,\ldots,np_n}$ be iid copies of $\tilde{S}_n(\tilde{I}_{1n})$.  To prove the convergence of $\frac{1}{\sqrt{np_n}}S_n(1:n)$, it suffices to show the following:
\beqq \label{eq:alpha:clt:1}
	\text{$\frac{1}{\sqrt{np_n}}\sum_{k=1}^{np_n} \tilde{S}_n(\tilde{I}_{kn})$ and $\frac{1}{\sqrt{np_n}}\sum_{k=1}^{np_n} S_n(\tilde{I}_{kn})$ has the same limiting distribution},
\eeqq
\beqq \label{eq:alpha:clt:3}
	\frac{1}{\sqrt{np_n}}\sum_{k=1}^{np_n} S_n(J_{kn}) \overset{p}\to 0,
\eeqq
and
\beqq \label{eq:alpha:clt:2}
	\frac{1}{\sqrt{np_n}}\sum_{k=1}^{np_n} \tilde{S}_n(\tilde{I}_{kn}) \overset{d}\to N(0,\sigma^2(f,f)).
\eeqq

The statement \eqref{eq:alpha:clt:1} \revise{holds} if
\beqq \label{eq:blocksize:1}
	np_n \alpha_{l_n} \to 0, \quad \text{as $n\to\infty$.}
\eeqq
This follows from the same argument in equation~(6.2) in \cite{davis:mikosch:2009}.

For condition \eqref{eq:alpha:clt:3}, it suffices to show that
$$
	\frac{1}{{np_n}}\var\left(\sum_{k=1}^{np_n} S_n(J_{kn})\right) \to 0.
$$ 
Note that
$$
	\frac{1}{{np_n}}\var\left(\sum_{k=1}^{np_n} S_n(J_{kn})\right) \le  \var(S_n(J_{1n}))+ 2 \sum_{h=1}^{np_n-1} (1-\frac{h}{np_n}) |\cov(S_n(J_{1n}),S_n(J_{(h+1)n}))| =:P_1+P_2.
$$
We have
\beao
	\limsup_{n\to\infty} P_1 &=& \limsup_{n\to\infty} \var\left(\sum_{j=1}^{l_n}Y_{jn}\right) \\
	&\le& \limsup_{n\to\infty} l_np_n \left(\frac{\var(Y_{1n})}{p_n} + 2 \sum_{j=1}^{l_n-1}(1-\frac{j}{l_n}) \frac{|\cov(Y_{1n},Y_{(j+1)n})|}{p_n}\right) \\
	&\le& \limsup_{n\to\infty} l_np_n\frac{\var(Y_{1n})}{p_n} + \lim_{h\to\infty}\limsup_{n\to\infty}  2l_np_n \sum_{j=1}^{h-1} \frac{|\cov(Y_{1n},Y_{(j+1)n})|}{p_n} \\
	&& \quad+    \lim_{h\to\infty}\limsup_{n\to\infty}  2l_np_n\sum_{j=h}^{l_n-1} \frac{|\cov(Y_{1n},Y_{(j+1)n})|}{p_n} \\
	&=& 0
\eeao
where the last step follows from dominated convergence and \eqref{eq:covsum:2}.
And for the other term,
\revise{
\beao
	P_2 &\le& 2\sum_{h=1}^{np_n-1} \sum_{s\in J_{1n}}\sum_{t \in J_{(h+1)n}}\left| \cov(Y_{sn},Y_{tn})\right| \\
	&\le& 2 \sum_{h=1}^{np_n-1}  l_n \sum_{k=h/p_n-l_n+1}^{h/p_n} \left| \cov(Y_{1n},Y_{(k+1)n})\right| \\
	&\le& 2l_np_n\sum_{k=1/p_n-l_n+1}^{\infty}\frac{\left| \cov(Y_{1n},Y_{(k+1)n})\right| }{p_n}\\
	&\le& 2l_np_n\sum_{k=l_n+1}^{\infty}\frac{\left| \cov(Y_{1n},Y_{(k+1)n})\right| }{p_n} \to 0.
\eeao
}Note that $1/p_n=m_n$ is the size of big blocks $I_{kn}$'s and $1/p_n-l_n+1 = m_n-l_n+1$ is the distance between consecutive small blocks $(J_{kn},J_{(k+1)n})$'s.  The last limit follows from \eqref{eq:covsum:1}.

To finish the proof, we need to establish the central limit theorem in \eqref{eq:alpha:clt:2}.  Note the $\tilde{S}_n(\tilde{I}_{ln})$'s are iid with $\E\tilde{S}_n(\tilde{I}_{ln})=0$.  We now calculate its variance.  Recall that $1/p_n-l_n$ is the size of $\tilde{I}_{1n}$, the big block with small block removed.  Then
\beao
	\var \left(\tilde{S}_n(\tilde{I}_{1n})\right) &=& \var\left(\sum_{j=1}^{1/p_n-l_n}Y_{jn}\right) \\
	&=& (\frac{1}{p_n}-l_n) \var(Y_{jn}) + 2 \sum_{k=1}^{1/p_n-l_n-1}(1/p_n-l_n-k) \cov(Y_{1n},Y_{(k+1)n}) \\
	&=&(\frac{1}{p_n}-l_n) \var(Y_{jn}) + 2 \left(\sum_{k=1}^{h}+ \sum_{k=h+1}^{l_n} + \sum_{k=l_n+1}^{1/p_n-l_n-1}\right)\left(1-\frac{l_n+k}{1/p_n}\right) \frac{1}{p_n}\cov(Y_{1n},Y_{(k+1)n}) \\
	&:=& I_0 + I_1 + I_2 + I_3.
\eeao
Here
$$
	\lim_{n\to\infty} I_0 =  \lim_{n\to\infty}(1-l_np_n)\frac{1}{p_n} \var(Y_{jn}) = \sigma_0^2(f,f),
$$
and
$$
	\lim_{n\to\infty} I_1 = \lim_{n\to\infty} 2\sum_{k=1}^{h}\left(1-p_n(l_n+k)\right) \frac{\cov(Y_{1n},Y_{(k+1)n})}{p_n} = 2 \sum_{k=1}^h \sigma^2_k(f,f).
$$
We also have
$$
	\lim_{h\to\infty}\limsup_{n\to\infty}|I_2| \le \lim_{h\to\infty}\limsup_{n\to\infty}\sum_{k=h+1}^{l_n}\frac{|\cov(Y_{1n},Y_{(k+1)n})|}{p_n} =0
$$
from \eqref{eq:covsum:2},
and
$$
	\lim_{n\to\infty}|I_3| \le \lim_{n\to\infty}\sum_{k=l_n}^{\infty}\frac{|\cov(Y_{1n},Y_{(k+1)n})|}{p_n} =0
$$
from \eqref{eq:covsum:1}. 
Therefore
$$
	\lim_{n\to\infty}\var \left(\tilde{S}_n(\tilde{I}_{1n})\right) = \lim_{n\to\infty} I_0 + \lim_{h\to\infty}\lim_{n\to\infty} I_1  = \sigma_0^2(f,f) + 2 \sum_{k=1}^\infty \sigma^2_k(f,f) =: \sigma^2(f,f)
$$
as defined.  \revise{To show that this infinite sum converges, it suffices to show that 
$$
	\sum_{h=1}^\infty \mu_h(\left\{(\bfx,\bfx')|\|\bfx\|>1,\|\bfx'\|>1\right\}) < \infty.
$$
This follows from \eqref{eq:lm:cond:2} in condition~\hyperref[cond:m]{({\bf M})}, for if
$$
	\sum_{h=1}^\infty \mu_h(\left\{(\bfx,\bfx')|\|\bfx\|>1,\|\bfx'\|>1\right\}) = \infty,
$$
then
\beao
	 \limsup_{n\to\infty} \frac{1}{p_n} \sum_{j=h}^{l_n} \P(\|\bfX_0\|>r_n, \|\bfX_j\|>r_n) 
	 &\ge&  \liminf_{n\to\infty} \sum_{j=h}^{l_n} \P(\|\bfX_0\|>r_n, \|\bfX_j\|>r_n | \|\bfX_0\|>r_n) \\
	 &\ge&  \sum_{j=h}^\infty \mu_j(\left\{(\bfx,\bfx')|\|\bfx\|>1,\|\bfx'\|>1\right\}) = \infty,
\eeao
which leads to a contradiction.}

To apply the central limit theorem, we verify the \revise{Lindeberg}'s condition,
\beao
	 \E \left[(\tilde{S}_n(\tilde{I}_{1n}))^2 \mathbf1_{\{|\tilde{S}_n(\tilde{I}_{1n})|>\epsilon\sqrt{np_n}\}} \right] 
	 &\le&  \E \left[\left(\sum_{j=1}^{1/p_n-l_n}Y_{jn}\right)^2 \mathbf1_{\{|\tilde{S}_n(\tilde{I}_{1n})|>\epsilon\sqrt{np_n}\}} \right] \\
	&\le& \E \left[c(1/p_n-l_n)^2 \mathbf1_{\{|\tilde{S}_n(\tilde{I}_{1n})|>\epsilon\sqrt{np_n}\}} \right] \\
	&\le& c \frac{1}{p_n^2} \P\left[{|\tilde{S}_n(\tilde{I}_{1n})|>\epsilon\sqrt{np_n}}\right] \\
	&\le& c\frac{1}{p_n^2} \frac{\var\left[\tilde{S}_n(\tilde{I}_{1n})\right]}{\epsilon^2np_n}  \,=\,O(\frac{1}{np_n^3}) \to 0.
\eeao
This completes the proof for the convergence of $\frac{1}{\sqrt{np_n}}S_n(1:n)$.  

The joint convergence of $\frac{1}{\sqrt{np_n}} (S_n^{(1)},S_n^{(2)})^T$ follows from the same line of argument together with the Cr\'amer-Wold device.  In particular,
$$
	\frac{1}{np_n} \cov\left(S_n^{(i)},S_n^{(j)}\right) = \sigma^2(f_i,f_j), \quad i,j=1,2.
$$
This completes the proof of the lemma. \qed
\end{proof}

\begin{remark}
Lemma~\ref{lemma:fancy} itself is a more general result of independent interest.  The result can be generalized for functions $f_i$ defined on $\overline{\bbr}^{d}\backslash \{\bf0\}$ with compact support.  In this case, condition~\eqref{eq:lm:cond:2} should be modified to
$$
	\lim_{h\to\infty} \limsup_{n\to\infty} \frac{1}{p_n} \sum_{j=h}^{l_n} \P(\|\bfX_0\|>\epsilon r_n, \|\bfX_j\|>\epsilon r_n) =0
$$
for some $\epsilon>0$, where $\text{support}(f)\subseteq\overline{\bbr}^{d}\backslash B_\epsilon(\bf0)$.  Also, as seen during the proof of the lemma, the conditions on $p_n$, $l_n$, and $\alpha_t$ can be further relaxed.
\end{remark}

\end{document}